\newcommand{\eps}{\varepsilon}
\newcommand{\eff}{\varepsilon^{\text{eff}}}
\renewcommand{\vec}[1]{\boldsymbol{#1}}
\newcommand{\vH}{\vec H}
\newcommand{\vE}{\vec E}
\newcommand{\vJ}{\vec J}
\newcommand{\vn}{\vec \nu}
\newcommand{\vt}{\vec \tau}
\newcommand{\hvn}{\vec{\hat\nu}}
\newcommand{\hvt}{\vec{\hat\tau}}
\newcommand{\vx}{\vec x}
\newcommand{\vy}{\vec y}
\newcommand{\hvy}{\vec{\hat y}}
\newcommand{\hz}{\vec{\hat z}}
\newcommand{\vq}{\vec q}
\newcommand{\hvq}{\vec {\hat q}}
\newcommand{\hvchi}{\vec{\hat \chi}}
\newcommand{\hvvarphi}{\vec{\hat \varphi}}
\newcommand{\calH}{\vec{\mathcal{H}}}
\newcommand{\calE}{\vec{\mathcal{E}}}
\newcommand{\im}{\textrm i}
\newcommand{\unit}{\vec I}
\newcommand{\dy}{\,\mathrm{d}y}
\newcommand{\doy}{\,\mathrm{d}o_y}
\newcommand{\dhy}{\,\mathrm{d}\hat y}
\newcommand{\dhohy}{\,\mathrm{d}{\hat o}_{\hat y}}
\newcommand{\tdiffop}{\hat\nabla^h_{\eta}}
\newcommand{\tnegdiffop}{\hat\nabla^{-h}_{\eta}}
\newcommand{\veta}{{\vec\eta}}
\newcommand{\tangent}[1]{#1_{\mathbf{t}}}
\newcommand{\htangent}[1]{#1_{\hat{\mathbf{t}}}}
\newcommand{\tnabla}{\tangent{\nabla}}
\newcommand{\htnabla}{\htangent{\hat\nabla}}
\newcommand{\tve}[1]{\tangent{{\vec e_{#1}}}}
\renewcommand{\Re}{\mathrm {Re}\,}
\renewcommand{\Im}{\mathrm {Im}\,}
\newcommand{\C}{\mathbb C}
\newcommand{\R}{\mathbb R}
\newcommand{\Z}{\mathbb Z}
\newcommand{\Si}{{\Sigma}}
\newcommand{\js}[1]{\left[#1\right]_{\Si}}
\newcommand{\ie}{i.\,e.}
\title{Shape optimization of optical microscale inclusions}
\author{%
  Manaswinee Bezbaruah%
  \thanks{Department of Mathematics, Texas A\&M University, 3368 TAMU,
    College Station, TX 77843, USA (\email{\{bezba004,maier\}@tamu.edu})}
  \and%
  Matthias Maier\footnotemark[1]
  \and%
  Winnifried Wollner%
  \thanks{
    Fachbereich Mathematik, MIN Fakultät, Universität Hamburg,
    Bundesstr. 55, 20146 Hamburg, Germany
    (\email{winnifried.wollner@uni-hamburg.de})}
}
\begin{document}

\maketitle
\begin{abstract}
  This paper describes a class of shape optimization problems for optical
  metamaterials comprised of periodic microscale inclusions composed of a
  dielectric, low-dimensional material suspended in a non-magnetic bulk
  dielectric. The shape optimization approach is based on a homogenization
  theory for time-harmonic Maxwell's equations that describes effective
  material parameters for the propagation of electromagnetic waves through
  the metamaterial. The control parameter of the optimization is a
  deformation field representing the deviation of the microscale geometry from
  a reference configuration of the cell problem. This allows for describing the 
  homogenized effective permittivity tensor as a function of the
  deformation field.
  We show that the underlying deformed cell problem is well-posed and
  regular. This, in turn, proves that the shape optimization problem is
  well-posed. In addition, a numerical scheme is formulated that utilizes an
  adjoint formulation with either gradient descent or BFGS as optimization
  algorithms. The developed algorithm is tested numerically on a number of
  prototypical shape optimization problems with a prescribed effective
  permittivity tensor as the target.
\end{abstract}

\begin{keywords}
  Shape optimization, adjoint formulation, inverse homogenization, plasmonic
  crystals, time-harmonic Maxwell's equations
\end{keywords}

\begin{AMS}
  35Q60, 49M41, 65N21, 65N30
\end{AMS}

%%%%%%%%%%%%%%%%%%%%%%%%%%%%%%%%%%%%%%%%%%%%%%%%%%%%%%%%%%%%%%%%%%%%%%%%%%%%%%%%
%%%%%%%%%%%%%%%%%%%%%%%%%%%%%%%%%%%%%%%%%%%%%%%%%%%%%%%%%%%%%%%%%%%%%%%%%%%%%%%%
%%%%%%%%%%%%%%%%%%%%%%%%%%%%%%%%%%%%%%%%%%%%%%%%%%%%%%%%%%%%%%%%%%%%%%%%%%%%%%%%

\section{Introduction}
\label{sec:introduction}
Plasmonic crystals consisting of 2D dielectric inclusion have given way to
a wide range of striking optical phenomena that are unlike the classical
behavior of electromagnetic waves. Some examples of such optical phenomenon
are optical cloaking, negative-refraction, subwavelength focusing, and the
\emph{epsilon-near-zero} effect~\cite{alu2005, nemilentsau2016, cheng2014,
mattheakis2016, huang2011, moitra2013, silveirinha2007}. A well-studied
example of a plasmonic crystal is graphene nanosheets that are arranged
periodically with subwavelength spacing and suspended in a bulk
non-magnetic dielectric host~\cite{alu2005,cheng2014}. The wide wealth of
applications motivates the need to describe, design, and tune optical
properties of plasmonic crystals~\cite{molesky2018,liberal2017}. This, in
turn, motivates the current work presented here.
Analytic homogenization approaches for time-harmonic Maxwell
equations~\cite{wellander2001, wellander2002, wellander2003} have been
extended successfully to the setting of plasmonic crystals with
lower-dimensional interfaces~\cite{amirat2017, maier19c, maier20c}. Here,
the periodic microstructure is replaced by an effective permittivity tensor
that is determined by a weighted average over a so-called cell problem.
This setup can serve as an efficient computational tool for computing the
optical macroscale response~\cite{maier19c,maier21c}.
In this manuscript, we introduce a shape-optimization approach that is based
on this notion of computing effective material parameters with the help of
microscopic cell problems. Specifically, we introduce a control variable
describing the deformation of a given reference geometry of the cell
problem with the target of achieving a preset macroscopic permittivity
tensor. Our key objectives with the present paper are to
\begin{itemize}
  \item
    Extend the two-scale homogenization result by introducing a deformation
    field. We show, in particular, that the resulting system is well-posed
    and show regularity of the corrector solution of the modified cell problem,
    provided that the deformation field is of suitable regularity.
  \item
    Formulate a shape-optimization problem based on the homogenization
    procedure that uses the deformation field as the control variable. We also
    discuss penalization strategies for ensuring mesh regularity of the
    cell problem.
  \item
    Discuss a number of prototypical numerical results highlighting the
    algorithmic approach. We in particular demonstrate that our shape
    optimization formulation works for manipulating optical microstructures
    to achieve an epsilon-near-zero plasmonic crystal.
\end{itemize}

%%%%%%%%%%%%%%%%%%%%%%%%%%%%%%%%%%%%%%%%%%%%%%%%%%%%%%%%%%%%%%%%%%%%%%%%%%%%%%%%
\subsection{Microscopic model and homogenization theory}
\label{subsec:intro_homogenization}
Figure~\ref{fig:layered} shows the geometry of a three-dimensional
plasmonic crystal consisting of periodic copies of a scaled representative
volume element $Y$ containing a curved surface $\Sigma$ representing a 2D
material with surface conductivity $\sigma$ and otherwise filled with a
dielectric host material. The periodicity is denoted by $d$ and $\Sigma^d$
is the union of all scaled copies of $\Sigma$; see
Figure~\ref{fig:layered}.
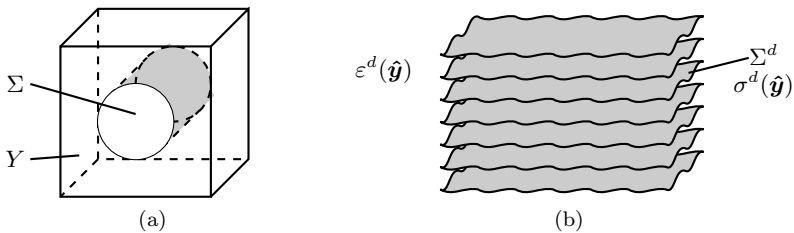
\begin{figure}[h]
  \centering
    \subfloat[]{
      \hspace{-3em}
      \begin{tikzpicture}[scale=1.00]
        \path [thick, draw]
          (-1, -1) -- (1, -1) -- (1, 1) -- (-1, 1) -- cycle;
        \path [thick, draw] (1,-1) -- (1.5, -0.5) -- (1.5, 1.5) -- (1, 1);
        \path [thick, draw] (-1,1) -- (-0.5, 1.5) -- (1.5, 1.5);
        \path [thick, draw, dashed]
          (-0.5, 1.5) -- (-0.5, -0.5) -- (1.5, -0.5);
        \path [thick, draw, dashed] (-0.5, -0.5) -- (-1.0, -1.0);
        \path [thick, draw, dashed] (0.5, 0.5) circle (0.5);
        \path [thick, draw] (0.0, 0.0) circle (0.5);
        \path [thick, draw, dashed] (0.35, -0.35) -- (0.85, 0.15);
        \path [thick, draw, dashed] (-0.35, 0.35) -- (0.15, 0.85);

        \path [fill= black, fill opacity=0.2]
        (1.05, 0.35) -- (0.45, 1.05) -- (-0.25, 0.45) -- (0.25, -0.45)
        (0.5, 0.5) circle (0.5);
        \path [fill= white, fill opacity=1.0](0.0, 0.0) circle (0.5);
        \path [thick, draw] (-1.4, 0.5) -- (0.0, 0.1);
        \node at (-1.6, 0.5) {\small $\Sigma$};
        \path [thick, draw] (-1.4, -0.5) -- (-0.75, -0.45);
        \node at (-1.6, -0.5) {\small $Y$};
    \end{tikzpicture}}
    \hspace{3em}
    \subfloat[]{
      \begin{tikzpicture}[scale=1.00]
      \foreach \x in {0,...,6} {
        \path [fill=white]
          (-1.5, -1+0.3*\x) to[out=-30,in=170] (-1.0, -1+0.3*\x)
                             to[out=-30,in=170] (-0.5, -1+0.3*\x)
                             to[out=-30,in=170]  (0.0, -1+0.3*\x)
                             to[out=-30,in=170]  (0.5, -1+0.3*\x)
                             to[out=-30,in=170]  (1.0, -1+0.3*\x)
                             to[out=-30,in=170]  (1.5, -1+0.3*\x)
                             to[out=-30,in=170] (1.75, -0.75+0.3*\x)
                             to[out=-30,in=170] (2.00, -0.5+0.3*\x)
                             to[out=170,in=-30] (1.50, -0.5+0.3*\x)
                             to[out=170,in=-30] (1.00, -0.5+0.3*\x)
                             to[out=170,in=-30] (0.50, -0.5+0.3*\x)
                             to[out=170,in=-30] (0.00, -0.5+0.3*\x)
                             to[out=170,in=-30] (-0.5, -0.5+0.3*\x)
                             to[out=170,in=-30] (-1.0, -0.5+0.3*\x)
                             to[out=170,in=-30] (-1.25, -0.75+0.3*\x)
                             to[out=170,in=-30] cycle;
        \path [thick, draw, fill=black, fill opacity=0.2]
          (-1.5, -1+0.3*\x) to[out=-30,in=170] (-1.0, -1+0.3*\x)
                             to[out=-30,in=170] (-0.5, -1+0.3*\x)
                             to[out=-30,in=170]  (0.0, -1+0.3*\x)
                             to[out=-30,in=170]  (0.5, -1+0.3*\x)
                             to[out=-30,in=170]  (1.0, -1+0.3*\x)
                             to[out=-30,in=170]  (1.5, -1+0.3*\x)
                             to[out=-30,in=170] (1.75, -0.75+0.3*\x)
                             to[out=-30,in=170] (2.00, -0.5+0.3*\x)
                             to[out=170,in=-30] (1.50, -0.5+0.3*\x)
                             to[out=170,in=-30] (1.00, -0.5+0.3*\x)
                             to[out=170,in=-30] (0.50, -0.5+0.3*\x)
                             to[out=170,in=-30] (0.00, -0.5+0.3*\x)
                             to[out=170,in=-30] (-0.5, -0.5+0.3*\x)
                             to[out=170,in=-30] (-1.0, -0.5+0.3*\x)
                             to[out=170,in=-30] (-1.25, -0.75+0.3*\x)
                             to[out=170,in=-30] cycle;
      }
      \node at (-2.25, +0.6) {\small $\varepsilon^d(\hvy)$};
      \node at (2.8, +0.8) {\small $\Sigma^d$};
      \path [thick, draw] (2.6, 0.75) -- (1.8, 0.55);
      \node at (2.8,  0.4) {\small $\sigma^d(\hvy)$};
    \end{tikzpicture}}

  \caption{(a) The unit cell $Y = [0,1]^3$ consisting of 2D
    graphene inclusions $\Sigma$ with surface conductivity $\sigma$
    in an ambient host material with permittivity $\varepsilon$; (b)
    the plasmonic crystal formed by many scaled and repeated
    copies of $Y$ in all space directions.
    \label{fig:layered}}

\end{figure}
We assume that the dielectric host has a uniform
and isotropic relative permittivity $\eps$ and permeability $\mu$. The
(scaled) surface conductivity $\sigma^d$ in the numerical tests will be
given by a simple Drude model; see Section~\ref{subsec:maxwell}. The
time-harmonic response of the plasmonic crystal is described by the
time-harmonic Maxwell equations governing the scattering of an
electromagnetic wave $(\vE^d,\vH^d)$, viz.
\begin{align*}
    \nabla \times \vE^d = i\omega \mu\vH^d,
    \qquad
    \nabla \times \vH^d = -i\omega\eps \vE^d + \vJ^d_a.
\end{align*}
Here, $\omega$ is the angular frequency, $\mu$ denotes the relative
permeability and $\vJ^d_a$ is a prescribed current density. This system of
equations is now closed with jump conditions over the interface $\Sigma^d$
that arise from the surface conductivity $\sigma^d(\vx)$; see
Section~\ref{subsec:maxwell}. In the limit $d\to0$ for the periodicity, the
time-harmonic system reduces to a \emph{homogenized system},
\begin{align*}
    \nabla \times \calE = i\omega \mu\calH,
    \qquad
    \nabla \times \calH = -i\omega\eff \calE + \vJ^d_a,
\end{align*}
where the heterogeneous permittivity and microscale inclusions have been
replaced by an effective permittivity tensor $\eff(\vx)$ that is given by a
\emph{weighted} average~\cite{amirat2017,maier20c} over the unit cell $Y$.
The weight is due to a \emph{corrector} $\vec\chi$ that is described by an
auxiliary cell problem incorporating the effects of the microscale
inclusions $\Sigma$; see Section~\ref{subsec:homogenization}.

%%%%%%%%%%%%%%%%%%%%%%%%%%%%%%%%%%%%%%%%%%%%%%%%%%%%%%%%%%%%%%%%%%%%%%%%%%%%%%%%
\subsection{Shape optimization approach}
\label{subsec:intro_deformation}
We show in Section~\ref{subsec:well_pos} that the above homogenization
result establishes a smooth functional relationship between the effective
permittivity tensor $\eff$ and the material parameters $\eps$, $\mu$, $\sigma^d$,
as well as the hypersurface $\Sigma$. We exploit this functional
relationship to formulate a shape optimization problem in which we
deform $\Sigma$ for minimizing the misfit of $\eff$ and a preset target
permittivity tensor $\eps^{\text{target}}$. To this end, we introduce a
deformation field $\hvq:\hat Y\to \R^3$ that acts on a reference
configuration $\hat Y$ with inclusion $\hat \Sigma$, see Figure~\ref{fig:deformation}:
\begin{align*}
  \hat Y \ni \hvy
  \quad\leftrightarrow\quad
  \vy(\hvy) := \hvy + \hvq(\hvy) \in Y.
\end{align*}
\begin{figure}[tb]
  \centering

  \begin{tikzpicture}
    \path [thick, draw]
      (-1, -1) -- (1, -1) -- (1, 1) -- (-1, 1) -- cycle;
    \path [thick, draw] (1,-1) -- (1.5, -0.5) -- (1.5, 1.5) -- (1, 1);
    \path [thick, draw] (-1,1) -- (-0.5, 1.5) -- (1.5, 1.5);
    \path [thick, draw, dashed]
      (-0.5, 1.5) -- (-0.5, -0.5) -- (1.5, -0.5);
    \path [thick, draw, dashed] (-0.5, -0.5) -- (-1.0, -1.0);
    \path [thick, draw, dashed] (0.5, 0.5) circle (0.5);
    \path [thick, draw] (0.0, 0.0) circle (0.5);
    \path [thick, draw, dashed] (0.35, -0.35) -- (0.85, 0.15);
    \path [thick, draw, dashed] (-0.35, 0.35) -- (0.15, 0.85);
    \path [very thick, draw, ->] (2, 0.0) -- (3.5, 0.0);
    \path [fill= black, fill opacity=0.2]
    (1.05, 0.35) -- (0.45, 1.05) -- (-0.25, 0.45) -- (0.25, -0.45)
    (0.5, 0.5) circle (0.5);
    \path [fill= white, fill opacity=1.0](0.0, 0.0) circle (0.5);
    \node at (2.7, -0.28){\footnotesize $\hvq$};
    \path [thick, draw]
      (4, -1) -- (6, -1) -- (6, 1) -- (4, 1) -- cycle;
    \path [thick, draw] (6,-1) -- (6.5, -0.5) -- (6.5, 1.5) -- (6, 1);
    \path [thick, draw] (4,1) -- (4.5, 1.5) -- (6.5, 1.5);
    \path [thick, draw, dashed]
    (4.5, 1.5) -- (4.5, -0.5) -- (6.5, -0.5);
    \path [thick, draw, dashed] (4.5, -0.5) -- (4.0, -1.0);
    \draw[rounded corners=5mm,thick]
    plot coordinates{(4.65,1) (5.5,0.5) (6,-0.25) (4.5,-0.5)}--cycle;
    \draw[rounded corners=5mm,thick,dashed]
    plot coordinates{(5.15,1.5) (6,1) (6.5,0.25) (5,0)}--cycle;
    \path [thick, draw, dashed] (5.66, -0.32) -- (6.15, 0.25);
    \path [thick, draw, dashed] (4.65, 0.65) -- (5.15, 1.15);
    \path[rounded corners=5mm,fill = black, opacity =0.2]
    (5.15,1.15) -- (6.35,0.25) -- (5.65,-0.35) -- (4.65, 0.65)
    plot coordinates{(5.15,1.5) (6,1) (6.5,0.25) (5,0)}--cycle;
    \path[rounded corners=5mm,fill = white, opacity =1.0]
    plot coordinates{(4.65,1) (5.5,0.5) (6,-0.25) (4.5,-0.5)}--cycle;
  \end{tikzpicture}
  \caption{A reference unit cell $\hat Y$ with a 2D inclusion
    $\hat\Sigma$ that is deformed by a deformation vector field
    $\hvq(\vx)$.}
  \label{fig:deformation}
\end{figure}
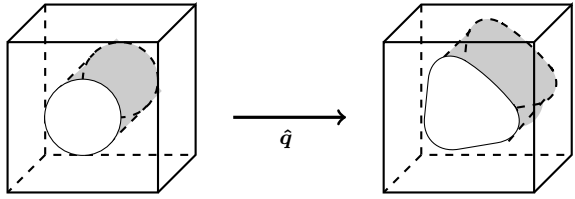
The numerical value of the effective permittivity tensor,
$\eff(\hvchi;\hvq)$, now depends on the \emph{control} $\hvq(\hvy)$ as well
as the \emph{state} $\hvchi(\hvy)$; see~\ref{subsec:ALE_transform}. We are
now in a position to introduce a cost function
\begin{align*}
  C(\hvchi;\hvq) \;:=\; \frac12\,\big\|\eff[\hvchi,\hvq]  -
  \eps^{\text{target}}\big\|_{\text{Fr.}}^2 +
  \frac\alpha2\big\|\nabla\hvq\big\|^2
  + \text{penalization},
\end{align*}
where $\alpha$ is a scaling factor for a Tikhonov regularization term and
the remaining penalization terms for ensuring mesh regularity are discussed
in detail in Section~\ref{sec:optimization}. Here,
$\|\cdot\|$ denotes the $L^2$-norm over $\hat{Y}$,
while $\|\cdot\|_{\text{Fr.}}$ denotes the Frobenius-norm. 

\subsection{Past works}
\label{subsec:past_works}
The use of nanoscale 2D dielectric inclusions for engineering and
manipulating an effective optical response has been discussed extensively
in the physics literature~\cite{alu2005, nemilentsau2016, cheng2014,
mattheakis2016, huang2011, moitra2013, silveirinha2007}. For some recent
advances in optical materials and photonics, we refer the reader to
\cite{tan2020, Hassan2022, yang2022}.

The following shape optimization approach is based on a well-established
periodic homogenization theory for time-harmonic Maxwell
equations~\cite{wellander2001, wellander2002, wellander2003} extended to
inclusions composed of lower-dimensional interfaces~\cite{amirat2017,
maier19c, maier20c}. A number of applications of the homogenization
approach have been discussed for simulating finite-sized
geometries~\cite{maier20e} and describing Lorentz
resonances~\cite{maier21c} of plasmonic crystals.

The minimization of the cost-functional is a particular case of
PDE-constrained optimization, where we utilize adjoint calculus to obtain
efficient formulas for the derivatives of the cost-functional with respect
to the deformations~$\hvq$, see, e.g.,~\cite{Troeltzsch:2010,
HinzePinnauUlbrichUlbrich:2009, BeckerMeidnerVexler:2007, Baubner:2020,
Brandenburg:2009}.

Using homogenization principles for optimal material configurations has
been explored before. For example, in \cite{Frei:2015} the authors use
upscaling techniques to optimize microscale fiber orientation. More
generally, the optimization of optical nanostructures
~\cite{estakhri2015,papadimopoulos2021} and related inverse design
problems~\cite{nikkhah2022, silva2014} have been explored extensively.

Our computational approach uses a
\emph{Broyden-Fletcher-Goldfarb-Shanno} (BFGS) \emph{quasi-Newton
scheme}~\cite{Fletcher:1988}. Such quasi-Newton methods have been
generalized for optimization problems over Riemannian manifolds,
particularly with emphasis on shape optimization, see,
e.g.,~\cite{RingWirth:2012,SchulzSiebenbornWelker:2016,Welker:2017}. To
assert positive definiteness of the iterates these approaches typically
rely on a Powell-Wolfe type step-length selection. As such the step-length
selection relies on a potential increase of the step length which can lead
to conflicts with barrier method we use for the penalization. We thus
utilize a damped BFGS update suggested by~\cite{Powell:1978} for the
inverse Hessian as proposed by~\cite{Wollner:2022}.

%%%%%%%%%%%%%%%%%%%%%%%%%%%%%%%%%%%%%%%%%%%%%%%%%%%%%%%%%%%%%%%%%%%%%%%%%%%%%%%%
\subsection{Paper organization}
\label{subsec:paper_org}
The remainder of the paper is organized as follows.
In Section~\ref{sec:problemdescription}, we summarize the underlying
homogenization result for plasmonic crystals, reformulate the cell problem
and averaging for the case of a deformed geometry, and establish
well-posedness and regularity of the cell problem for the deformed
configuration.
The shape optimization problem is described in Section~\ref{sec:optimization}.
A computational approach and numerical illustration are discussed in
Section~\ref{sec:numerical_examples}, specifically an optimization
algorithm based on gradient descent and an improved version using an
inverse Broyden-Fletcher-Goldfarb-Shanno (BFGS) quasi-Newton method is discussed. We
summarize our work and conclude in Section~\ref{sec:conclusion}. Detailed
proofs of the lemmas and theorems in Section~\ref{sec:problemdescription}
are given in Appendices~\ref{app:estimates} and \ref{app:regularity}.

%%%%%%%%%%%%%%%%%%%%%%%%%%%%%%%%%%%%%%%%%%%%%%%%%%%%%%%%%%%%%%%%%%%%%%%%%%%%%%%%
%%%%%%%%%%%%%%%%%%%%%%%%%%%%%%%%%%%%%%%%%%%%%%%%%%%%%%%%%%%%%%%%%%%%%%%%%%%%%%%%
%%%%%%%%%%%%%%%%%%%%%%%%%%%%%%%%%%%%%%%%%%%%%%%%%%%%%%%%%%%%%%%%%%%%%%%%%%%%%%%%
\section{Problem Description}
\label{sec:problemdescription}

The manuscript is concerned with a class of shape optimization problems
involving optical metamaterials comprised of periodic microscale inclusions.
In this section, we summarize the underlying microscale model and a
corresponding homogenization theory that will serve as a basis for the
shape optimization problem. We then introduce a deformed formulation of the
cell problems and formulate an optimization problem.

%%%%%%%%%%%%%%%%%%%%%%%%%%%%%%%%%%%%%%%%%%%%%%%%%%%%%%%%%%%%%%%%%%%%%%%%%%%%%%%%

\subsection{Background: Heterogeneous Maxwell equations}
\label{subsec:maxwell}
Consider a three-dimensional geometry $\Omega$ consisting of
scaled and periodic copies of a \emph{unit cell} $Y$, which incorporates
microscale inclusions given by 2D material surfaces $\Sigma$; see
Figure~\ref{fig:layered} \cite{maier19c, maier20c}. The time-harmonic
response of an electromagnetic wave $(\vE^d,\vH^d)$ to the scattering
configuration is described by the time-harmonic Maxwell equations valid on
the domain $\Omega\backslash\Sigma^d$, viz.,~\cite{monk03}
\begin{equation}
    \nabla \times \vE^d = i\omega \mu\vH^d,
    \qquad
    \nabla \times \vH^d = -i\omega\eps^d \vE^d + \vJ^d_a.
  \label{eq:maxwell_equations}
\end{equation}
Here, $\vE^d(\vx)$ (and $\vH^d(\vx)$) denote the time-harmonic,
complex-valued electric (and magnetic) field component, $\eps^d$ is the
electric permittivity of the ambient host, $\vJ^d_a$ is a given electric
current density, and $\omega$ denotes the angular frequency.
In \eqref{eq:maxwell_equations} we used the convention that fields
exhibit a $e^{-\im\omega t}$ time dependence, meaning, the time-dependent
real-valued physical field $\vec F(\vx,t)$ is reconstructed from its
time-harmonic counterpart $\vec F(\vx)$ by setting $\vec F(\vx,t)=\Re(\vec
F(\vx)e^{-\im\omega t})$.
The parameter $d$ denotes a \emph{scaling parameter} for the microscale
inclusions:
\begin{align*}
  \Sigma^d = \big\{d\,\vec z + d\,\vec \varsigma\;\;:\;\;\vec z \in \Z^3,\;
  \vec \varsigma\in\Sigma\big\}.
\end{align*}
Equation \eqref{eq:maxwell_equations} is now furnished with jump conditions
over the interface $\Sigma^d$ that arise due to the presence of a current
density $\vJ_{\Sigma^d} = \delta_{\Sigma^d} \sigma^d\vE^d_\mathbf{t}$
caused by \emph{surface conductivity} $\sigma^d$ of the microscale
inclusions \cite{maier19c}. Here, $\delta_{\Sigma^d}$ is the surface
measure of $\Sigma^d$. The current density leads to a jump in the
tangential component of the magnetic field. In summary,
\begin{equation}
  \left[\vec \nu \times \vE^d\right]_{\Sigma^d} = 0,
  \qquad
  \left[\vec \nu\times \vH^d\right]_{\Sigma^d} =
  \tangent{(\sigma^d\vE^d)},
  \label{eq:maxwell_jump_conditions}
\end{equation}
where the subscript $\mathbf{t}$ denotes projection onto the tangential
plane of $\Sigma$ and $\left[\,.\,\right]_{\Sigma^d}$ denotes the jump over
$\Sigma^d$ with respect to a chosen normal field $\vn$,%
\begin{align*}
  \left[\vec F\right]_{\Sigma^d}
  := \lim_{\alpha\searrow0}
  \Big(\vec F(\vx+\alpha\vn) - \vec F(\vx-\alpha\vn)\Big)\qquad
  \vx\in\Sigma^d.
\end{align*}
Note that numerical value of the jumps in
\eqref{eq:maxwell_jump_conditions} is independent of the orientation of the
chosen normal field $\vn$ because of the cross product of the fields with
$\vn$.
We will allow the permittivity $\eps^d$ and surface conductivity $\sigma^d$
to be tensor-valued. We make the additional scaling assumption on (bulk)
permittivity and surface conductivity that they depend on a slow scale
($\vx\in\Omega$) and are periodic and rapidly oscillating on a
fast scale ($\vy\in Y$) proportional to $d$, namely
\begin{align*}
  \eps^d(\vx) = \eps(\vx,\vx/d),
  \qquad
  \sigma^d(\vx)\,=\,d\,\sigma(\vx,\vx/d).
\end{align*}
Concretely, in the following we will use a simple Drude
model~\cite{grigorenko2012} for all numerical tests,
\begin{align}
  \sigma(\vx,\vy)
  \,=\, \tilde\sigma(\omega)\,\unit,
  \qquad
  \tilde\sigma(\omega) = \frac{\im\omega_p}{\omega+\im/\tau},
  \qquad
  \omega_p,\,\tau \in \R_{\ge 0}.
  \label{eq:drude}
\end{align}
Equation \eqref{eq:drude} is a common model for the surface conductivity of
graphene~\cite{grigorenko2012}. After a suitable nondimensionalization and rescaling typical
values for the constants are $\omega_p\approx4/137$ and $\tau\approx100$
for frequencies $\omega\approx1$ \cite{maier21c}.

%%%%%%%%%%%%%%%%%%%%%%%%%%%%%%%%%%%%%%%%%%%%%%%%%%%%%%%%%%%%%%%%%%%%%%%%%%%%%%%%
\subsection{Background: Homogenization results}
\label{subsec:homogenization}
System \eqref{eq:maxwell_equations} \& \eqref{eq:maxwell_jump_conditions}
furnished with the Drude model \eqref{eq:drude} exhibits a pronounced
two-scale character with an ambient vacuum wavelength $2\pi/k_0\sim1$ and a
small scale associated with the microscale inclusions and nanoscale
resonances scaling with $d$; typically, $d\ll 2\pi/k_0$
\cite{maier20c,maier21c}. In the limit $d\to0$, system
\eqref{eq:maxwell_equations} \& \eqref{eq:maxwell_jump_conditions} reduces
to a \emph{homogenized system}~\cite{amirat2017, maier19c, maier20c, maier21c,
wellander2001, wellander2002, wellander2003}:
\begin{equation}
    \nabla \times \calE = i\omega \mu\calH,
    \qquad
    \nabla \times \calH = -i\omega\eff \calE + \vJ^d_a,
  \label{eq:homogenized}
\end{equation}
where the heterogeneous permittivity and microscale inclusions have been
replaced by an effective permittivity tensor
\begin{multline}
  \label{eq:eff}
  \eff_{ij}(\vx) = \int_Y \eps(\vx,\vy)
  (\vec e_j+\nabla\chi_j^{\:T})\cdot(\vec e_i+\overline{\nabla\chi_i^{\:T}})\dy
  \\
  - \frac{1}{i\omega} \int_\Sigma \sigma(\vx,\vy)(\tve{j} + \tnabla \chi_j^{\:T})
  \cdot(\tve{i}+\overline{\tnabla\chi_i^{\:T}}) \doy.
\end{multline}
Here, we have adopted the convention that the gradient is a row vector and
$\vec e_i$ denotes the i-th (column) unit vector.
The subscript
$\mathbf{t}$ denotes projection onto the tangential plane of $\Sigma$,
and $\bar{.}$ denotes the complex conjugate of a complex-valued
quantity.
The corrector $\chi_i \in H:=\big\{\varphi\in
H^1_{\text{per}}(Y,\C),\;\tangent{\nabla}\varphi\in \vec
L^2(\Sigma,\C)\big\}$, $i=1,2,3$ is given by an associated cell problem
\begin{multline}
  \label{eq:cell_problem}
  \int_Y \eps(\vx,\vy)
  (\vec e_i+\nabla\chi_i^{\:T})\cdot\overline{\nabla{\varphi}^{\:T}}\dy
  \\
  - \frac{1}{i\omega} \int_\Sigma \sigma(\vx,\vy)
  (\tve{i} + \tnabla \chi_i^{\:T})
  \cdot\overline{\tnabla\varphi^{\:T}}\doy
  \;=\;0\quad\text{for all }\varphi\in H.
\end{multline}
$H^1_{\text{per}}(Y,\C)$ denotes the Sobolev-space of all
complex-valued square-integrable function with square-integrable partial
derivatives defined on the unit cell $Y$ that are periodic in all three
space directions, $\varphi(\vy) = \varphi(\vy +\vec e_i)$ for $i=1,2,3$ and
for all $\vy\in Y$. It has been established in
\cite{amirat2017,maier20c} that \eqref{eq:maxwell_equations} \&
\eqref{eq:maxwell_jump_conditions}, \eqref{eq:homogenized}, and
\eqref{eq:cell_problem} are well-posed provided some assumptions on the
material constants are satisfied. We summarize:
\begin{theorem}[Well-posedness and two-scale convergence
  \cite{amirat2017,maier20c}]
  \label{thm:well_posedness}
  Let $\mu \in \mathbb{R}_{>0}$ and let $\eps^d, \sigma^d \in
  L^\infty(\Omega,\C^{d\times d})$ be bounded, complex and
  tensor-valued functions such that $\Re(\eps^d(\vx))$ and
  $\Im{(\sigma^d(\vx))}$ are symmetric, and $\text{Im}(\eps^d(\vx))$ and
  $\text{Re}{(\sigma^d(\vx))}$ are symmetric and uniformly elliptic. Then,
  provided $\Sigma$ is sufficiently smooth \cite{amirat2017,maier20c},
  \eqref{eq:maxwell_equations} \& \eqref{eq:maxwell_jump_conditions},
  \eqref{eq:homogenized}, and \eqref{eq:cell_problem} are well-posed, the
  solution $(\vE^d,\vH^d)$ \emph{two-scale converges} to the solution
  $(\calE,\calH)$ of the homogenized problem \eqref{eq:homogenized}.
\end{theorem}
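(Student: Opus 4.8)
The plan is to treat the three well-posedness assertions and the convergence assertion separately, following the strategy of the cited works \cite{amirat2017, maier20c}. The cornerstone is well-posedness of the cell problem \eqref{eq:cell_problem}, from which well-posedness of the homogenized system \eqref{eq:homogenized} and the two-scale limit are derived; well-posedness of the heterogeneous system \eqref{eq:maxwell_equations} for fixed $d>0$ then follows from the same dissipativity structure.

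First I would establish well-posedness of the cell problem \eqref{eq:cell_problem} by a complex Lax--Milgram argument on the space $H$. Writing the left-hand side as a sesquilinear form $b(\cdot,\cdot)$ in the unknown corrector, boundedness is immediate from the $L^\infty$-bounds on $\eps$ and $\sigma$ together with the definition of $H$, which controls $\tnabla\varphi$ in $\vec L^2(\Sigma,\C)$. For coercivity I would test with the corrector itself and pass to the imaginary part. Since $-1/(\im\omega)=\im/\omega$, the bulk integrand then contributes $\Im(\eps)\,|\nabla\chi|^2$ and the interface integrand contributes $\tfrac1\omega\,\Re(\sigma)\,|\tnabla\chi|^2$; the assumed symmetry of $\Re(\eps)$ and $\Im(\sigma)$ makes those contributions real so that they cancel in the imaginary part, while the uniform ellipticity of $\Im(\eps)$ and $\Re(\sigma)$ yields
\begin{align*}
  \Im b(\chi,\chi)\;\ge\; c\Big(\|\nabla\chi\|^2_{\vec L^2(Y)}
  + \|\tnabla\chi\|^2_{\vec L^2(\Sigma)}\Big).
\end{align*}
Passing to the mean-zero (quotient) subspace and invoking the periodic Poincaré inequality upgrades this seminorm estimate to full $H$-coercivity, so that $|b(\chi,\chi)|\ge\Im b(\chi,\chi)\ge c\|\chi\|_H^2$. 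This produces a unique corrector $\chi_i$ for each $i=1,2,3$.

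With the corrector available, I would verify that the effective tensor $\eff$ inherits a uniformly elliptic imaginary part: substituting $\varphi=\chi_i$ into \eqref{eq:cell_problem} and inserting the identity into \eqref{eq:eff} expresses $\Im\,\eff$ through the same coercive quadratic form as above, so that the homogenized time-harmonic Maxwell system \eqref{eq:homogenized} is uniquely solvable by the standard Fredholm-plus-dissipation argument for the curl--curl operator. The remaining and most delicate part is the \emph{two-scale convergence} claim, which I would approach by (i) deriving energy estimates bounding $\vE^d$ and $\vH^d$ uniformly in $d$; (ii) extracting two-scale limits and using the curl structure to identify the admissible form of the oscillating limit field (curl-free in the fast variable, hence of corrector type); and (iii) passing to the limit in the weak formulation with oscillating test functions of the form $\varphi(\vx)+d\,\psi(\vx,\vx/d)$ to recover \eqref{eq:homogenized} and \eqref{eq:eff}.

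The main obstacle is the interface contribution. The surface current is supported on the rapidly oscillating hypersurface $\Sigma^d$ and enters through the singular surface measure $\delta_{\Sigma^d}$, so passing to the limit demands a two-scale convergence theory adapted to periodically oscillating surfaces rather than to the bulk. Bounding the surface traces uniformly in $d$ and identifying the limiting interface integral in \eqref{eq:eff} is precisely where the smoothness hypothesis on $\Sigma$ enters, and it is the step I expect to be technically hardest; the detailed arguments are carried out in \cite{amirat2017, maier20c}.
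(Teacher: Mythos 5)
The paper offers no proof of this theorem: it is quoted as background from \cite{amirat2017,maier20c}, with only a remark that extending from scalar to tensor-valued $\eps^d$, $\sigma^d$ requires minor adjustments to the cited proofs. Your sketch is consistent with that intended argument and, in particular, your coercivity computation (test with the corrector, take the imaginary part, use symmetry of $\Re\eps$, $\Im\sigma$ to drop those contributions and uniform ellipticity of $\Im\eps$, $\Re\sigma$ to bound below, then Poincar\'e on the mean-zero subspace) is exactly the mechanism the paper itself uses later in the proof of Theorem~\ref{thm:robustness} for the deformed cell problem; deferring the two-scale limit over the oscillating hypersurface $\Sigma^d$ to the references is appropriate, as that is where the genuinely hard analysis lives.
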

\begin{remark}
  The above theorem is, strictly speaking, a generalized version of what has
  been shown in \cite{amirat2017,maier20c} where the material parameters
  $\eps^d$ and $\sigma^d$ had been assumed to be scalar (and not
  tensor-valued coefficients). Proving the generalized statement, however,
  only requires minor adjustments to the proofs.
  % ... and is left as an easy exercise to the reader.
\end{remark}

%%%%%%%%%%%%%%%%%%%%%%%%%%%%%%%%%%%%%%%%%%%%%%%%%%%%%%%%%%%%%%%%%%%%%%%%%%%%%%%%
\subsection{Deformed cell problem}
\label{subsec:ALE_transform}

We now introduce a deformation of the cell problem. For this we let $\hat
Y$ and $\hat \Sigma$ denote a given reference configuration consisting of
the unit cell $\hat Y = [0,1]^3$ with given (two dimensional) inclusions
$\hat \Sigma$. Let $Y$ and $\Sigma$ denote the deformed volume and
inclusions. We assume that the deformed geometry is given by a deformation
vector field $\hvq(\hvy)\in \vec D(\hat Y, \hat\Sigma)$
of the reference configuration \cite[pp. 70-79]{richter2017},
such that
\begin{align}
  \hat Y \ni \hvy
  \;\leftrightarrow\;
  \vy(\hvy) := \hvy + \hvq(\hvy) \in Y
  \label{eq:deformation}
\end{align}
is a bijection that also maps $\hat\Sigma$ onto $\Sigma$
bijectively. We adopt the notation that $\hat .$ indicates that a
function, coordinate or differential operator is on the reference
configuration \cite{richter2017}. Here, in analogy to the function
space $H$ we have introduced
\begin{align}
  \vec D(\hat Y, \hat\Sigma) := \big\{\hvvarphi\in \vec W^{1,\infty}_0(\hat
  Y): \;\hvt_k\cdot \hat\nabla\hvvarphi\in L^\infty(\hat
  \Sigma),\,k=1,2\big\},
  \label{eq:control_space}
\end{align}
where $W^{1,\infty}_0(\hat Y)$ shall denotes the Sobolev space of all functions
$\varphi\in W^{1,\infty}_0(\hat Y)$ with vanishing trace on $\partial\hat
Y$.
\begin{remark}
  We note that $\hvq\in\vec D(\hat Y, \hat\Sigma)$ implies that $\hvq$ is
  already Lipschitz continuous; see, for example,
  \cite[Ex.\,8,\,p.\,156]{LiebLoss:2001}. This implies that the trace of
  $\hvq$ onto $\Sigma$ and $\partial\Omega$ is well defined and also
  Lipschitz continuous. Strictly speaking, this renders the condition
  $\hvt_k\cdot \hat\nabla\hvvarphi\in L^\infty(\hat \Sigma)$ superfluous.
  \label{rem:control_space}
\end{remark}
\begin{definition}
  \label{defi:transformation_gradient}
  The \emph{transformation gradient} $\hat F(\hvy)$ and determinant $\hat
  J$ are given by
  \begin{align}
    \label{eq:transformation_gradient}
    \hat F(\hvy)\;:=\;\unit+\hat\nabla\hvq(\hvy),
    \qquad
    \hat J(\hvy)\;:=\;\det\big(\hat F(\hvy)\big).
  \end{align}
\end{definition}
Here, we have adopted the convention that
$\big(\hat\nabla\hvq\big)_{ij}=\tfrac{\partial}{\partial{\hat y_j}}\hat
q_i$ and $\unit$ denotes the unit matrix. We then have,
\begin{lemma}[Transformation]
  \label{lem:transformation}
  Let $\hat\varphi(\hvy)$ be a differentiable function defined on $\hat Y$
  and let $\varphi(\vy)\;:\;Y\to\R$ be defined by setting
  $\varphi(\vy):=\hat\varphi(\hvy(\vy))$. Then,
  \begin{align}
    \label{eq:transformed_gradient}
    \nabla\varphi(\vy)^{\,T}
    \;=\;\hat F(\hvy)^{-T} \hat\nabla\hat\varphi(\hvy)^{\,T}.
  \end{align}
  Let $\hvn$ be a (unit) normal field on $\hat\Sigma$ and let $\hvt_1$,
  $\hvt_2$ be two orthonormal (unit) tangential fields. Then,
  \begin{align*}
    \vn = \hat F(\hvy)^{-T}\hvn /\|\hat F(\hvy)^{-T}\hvn\|_{\ell^2},
    \quad
    \vt_i = \hat F(\hvy)\hvt_i /\|\hat F(\hvy)\hvt_i\|_{\ell^2},
    \; i=1,2.
  \end{align*}
  are a (unit) normal field and orthonormal tangential fields on
  $\Sigma$. Moreover,
  \begin{align}
    \dy\;=\;\hat J\dhy,
    \qquad \qquad
    \doy\;=\;\|\hat F(\hvy)^{-T}\hvn\|_{\ell^2}\hat J\dhohy.
    \label{eq:transformed_volume_surface}
  \end{align}
\end{lemma}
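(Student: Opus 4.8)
The plan is to establish the four assertions—gradient, frame vectors, volume element, and surface element—in sequence, each following from the chain rule together with the fact that $\hat F = \partial\vy/\partial\hvy$ is the Jacobian of the deformation \eqref{eq:deformation}. For the gradient identity \eqref{eq:transformed_gradient} I would start from the defining relation $\varphi(\vy)=\hat\varphi(\hvy)$, with $\hvy=\hvy(\vy)$ the inverse of the bijection \eqref{eq:deformation}, and differentiate componentwise: $\partial\varphi/\partial y_k=\sum_j(\partial\hat\varphi/\partial\hat y_j)(\partial\hat y_j/\partial y_k)$. By Definition~\ref{defi:transformation_gradient} and the stated sign convention one has $\hat F=\partial\vy/\partial\hvy$, hence $\partial\hvy/\partial\vy=\hat F^{-1}$, so the row-vector gradient obeys $\nabla\varphi=(\hat\nabla\hat\varphi)\,\hat F^{-1}$; transposing (and using $(\hat F^{-1})^{T}=\hat F^{-T}$) yields \eqref{eq:transformed_gradient}.

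For the frame vectors I would use the pushforward of curves. Parametrizing a curve $\hat\gamma(s)\subset\hat\Sigma$ with $\hat\gamma'(s)=\hvt_i$ and differentiating its image $\gamma=\vy\circ\hat\gamma$ gives $\gamma'(s)=\hat F\,\hvt_i$, so $\hat F\hvt_i$ is tangent to $\Sigma$; normalizing produces the stated $\vt_i$. To identify the normal I would check orthogonality against an arbitrary tangent $\hat F\hvt$ of $\Sigma$: $\langle\hat F^{-T}\hvn,\hat F\hvt\rangle=\langle\hvn,\hat F^{-1}\hat F\hvt\rangle=\langle\hvn,\hvt\rangle=0$, so $\hat F^{-T}\hvn$ is orthogonal to the entire tangent plane of $\Sigma$, and normalizing gives $\vn$.

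The volume element is immediate from change of variables: $\dy=|\det\hat F|\,\dhy=\hat J\,\dhy$, where positivity $\hat J>0$ holds because \eqref{eq:deformation} is an orientation-preserving bijection. The surface element is the main obstacle and requires Nanson's formula. I would parametrize $\hat\Sigma$ locally by $(u,v)\mapsto\hat\gamma(u,v)$, so $\dhohy=\|\partial_u\hat\gamma\times\partial_v\hat\gamma\|\,du\,dv$ and $\hvn\parallel\partial_u\hat\gamma\times\partial_v\hat\gamma$. On the image $\partial_u\gamma=\hat F\partial_u\hat\gamma$, $\partial_v\gamma=\hat F\partial_v\hat\gamma$, whence $\doy=\|\hat F\partial_u\hat\gamma\times\hat F\partial_v\hat\gamma\|\,du\,dv$. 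The crux is the cofactor identity $\hat F\vec a\times\hat F\vec b=\hat J\,\hat F^{-T}(\vec a\times\vec b)$, valid for invertible $\hat F$ since $\mathrm{cof}(\hat F)=\hat J\,\hat F^{-T}$; applying it with $\vec a=\partial_u\hat\gamma$, $\vec b=\partial_v\hat\gamma$ and pulling the scalar $\|\partial_u\hat\gamma\times\partial_v\hat\gamma\|$ out of the norm yields $\doy=\hat J\,\|\hat F^{-T}\hvn\|_{\ell^2}\,\dhohy$, which is the second identity in \eqref{eq:transformed_volume_surface}.

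The principal difficulty I anticipate is proving the cofactor–cross-product identity cleanly (equivalently, Nanson's formula); everything else reduces to the chain rule and normalization. A secondary point worth flagging is that the normalized pushforwards $\vt_1,\vt_2$ are unit and span the tangent plane of $\Sigma$, but are generally \emph{not} mutually orthogonal unless $\hat F$ acts conformally on the tangent plane—so \emph{orthonormal} should here be read as \emph{a unit tangent frame}, which is all that the surface integral in \eqref{eq:cell_problem} actually requires.
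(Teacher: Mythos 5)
Your proof is correct and follows exactly the route the paper intends: the paper's own proof consists only of a citation to \cite{richter2017} for the chain-rule identities together with the remark that the normal and surface-element formulas are ``a direct consequence of Nanson's formula,'' which is precisely the cofactor identity $\hat F\vec a\times\hat F\vec b=\hat J\,\hat F^{-T}(\vec a\times\vec b)$ that you prove and apply. Your closing caveat is also well taken: the normalized pushforwards $\hat F(\hvy)\hvt_1/\|\hat F(\hvy)\hvt_1\|_{\ell^2}$ and $\hat F(\hvy)\hvt_2/\|\hat F(\hvy)\hvt_2\|_{\ell^2}$ are unit tangent fields spanning the tangent plane of $\Sigma$ but are in general not mutually orthogonal, so the word ``orthonormal'' in the lemma should indeed be read as ``unit tangent frame.''
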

\begin{proof}
  We refer to \cite[§\,2.1.2]{richter2017} for a detailed
  discussion of the transformation identities. The transformation formula
  for $\vn$ and $\doy$ is a direct consequence of \emph{Nanson's formula}.
\end{proof}
\begin{lemma}
  \label{lem:tangential_transformation}
  In the setting of Lemma~\ref{lem:transformation} we have the identity
  \begin{align*}
    \tnabla\varphi
    \;=\;
    \sum_k \vt_k\vt_k^T(\hat F(\hvy)^{-T}\hat\nabla\hat\varphi)
    \;=\;
    \sum\limits_k \frac{\hat F(\hvy)\hvt_k}{\|\hat F(\hvy)\hvt_k\|^2_{\ell^2}} \,
    \hvt_k\cdot\hat\nabla\hat\varphi.
  \end{align*}
\end{lemma}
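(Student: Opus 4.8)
The plan is to obtain both identities by inserting the transformation rules of Lemma~\ref{lem:transformation} into the definition of the tangential gradient and then simplifying algebraically. Recall that $\tnabla\varphi$ is the projection of the full gradient $\nabla\varphi$ onto the tangent plane of $\Sigma$, and that by Lemma~\ref{lem:transformation} the pair $\{\vt_1,\vt_2\}$ is an orthonormal frame spanning this tangent plane. Consequently the orthogonal projector onto the tangent plane is $\sum_k\vt_k\vt_k^T$, and applying it to the transformed gradient $\nabla\varphi^{\,T}=\hat F(\hvy)^{-T}\hat\nabla\hat\varphi^{\,T}$ from \eqref{eq:transformed_gradient} yields immediately
\[
  \tnabla\varphi=\sum_k\vt_k\vt_k^T\big(\hat F(\hvy)^{-T}\hat\nabla\hat\varphi\big),
\]
which is the first equality.

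For the second equality I would substitute the explicit form $\vt_k=\hat F(\hvy)\hvt_k/\|\hat F(\hvy)\hvt_k\|_{\ell^2}$ from Lemma~\ref{lem:transformation} into each rank-one term. Writing $\vt_k\vt_k^T=\|\hat F(\hvy)\hvt_k\|_{\ell^2}^{-2}\,\hat F(\hvy)\hvt_k\hvt_k^T\hat F(\hvy)^T$ and multiplying on the right by $\hat F(\hvy)^{-T}$, the key cancellation is $\hat F(\hvy)^T\hat F(\hvy)^{-T}=\unit$, which collapses the term to $\|\hat F(\hvy)\hvt_k\|_{\ell^2}^{-2}\,\hat F(\hvy)\hvt_k\hvt_k^T$. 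Applying this to $\hat\nabla\hat\varphi$ and recognizing $\hvt_k^T\hat\nabla\hat\varphi=\hvt_k\cdot\hat\nabla\hat\varphi$ as a scalar then gives the contribution $\hat F(\hvy)\hvt_k\,(\hvt_k\cdot\hat\nabla\hat\varphi)/\|\hat F(\hvy)\hvt_k\|_{\ell^2}^2$, and summing over $k$ reproduces the claimed right-hand side.

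I expect the only delicate points to be bookkeeping of the row/column-vector convention for the gradient (so that the transposes in $\hat F(\hvy)^{-T}$ and in the rank-one projectors are applied consistently) and the justification that $\sum_k\vt_k\vt_k^T$ is the correct tangential projector, which rests entirely on the orthonormality of $\{\vt_1,\vt_2\}$ asserted in Lemma~\ref{lem:transformation}. Once the first equality is established, the second is purely algebraic and follows term by term, with the identity $\hat F(\hvy)^T\hat F(\hvy)^{-T}=\unit$ doing all the work; no further structure—in particular, no renewed appeal to orthonormality—is needed there.
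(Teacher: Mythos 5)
Your proposal is correct and follows essentially the same route as the paper: the paper likewise derives the identity directly from the projector representation $\tnabla\varphi=\sum_k\vt_k\vt_k^T\nabla\varphi$ combined with the transformation formulas of Lemma~\ref{lem:transformation}, and your explicit cancellation $\hat F(\hvy)^T\hat F(\hvy)^{-T}=\unit$ is exactly the algebra the paper leaves implicit. Nothing further is needed.
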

\begin{proof}
  The statement is a direct consequence of Lemma~\ref{lem:transformation}
  and the fact that
  \begin{align*}
    \tnabla\varphi = \sum_k \vt_k \vt_k^T \nabla \varphi.
  \end{align*}
\end{proof}
We are now in a position to recast \eqref{eq:eff} and
\eqref{eq:cell_problem} in reference coordinates. We introduce
\begin{equation*}
  \vec H(\hat Y, \hat\Sigma):=
  \big\{\hvvarphi\in \vec H^1_{\text{per}}(\hat Y,\C)\;:\;
  \;\hvt_k\cdot \hat\nabla\hvvarphi\in L^2(\hat \Sigma,\C^{d\times
  d}),\,k=1,2\big\}
\end{equation*}
The corrector $\hvchi=\big(\hat{\chi}_i\big)_{i=1}^3 \in \vec H(\hat
Y, \hat \Sigma)$ is determined by
\begin{equation}
  \label{eq:cell_problem_ale}
  E(\hvchi,\hvvarphi;\hvq) \;=\;0
  \quad\text{for all }\hvvarphi\in\vec H(\hat Y, \hat \Sigma),
\end{equation}
where,
\begin{multline}
  \label{eq:cell_problem_ale_bilinear}
  E(\hvchi,\hvvarphi;\hvq) :=
  \int_{\hat Y} \eps(\vx,\vy(\hvy))
    (\unit + \hat F(\hvy)^{-T}\hat\nabla\hvchi^{\:T})
    \cdot(\hat F(\hvy)^{-T}\overline{\hat\nabla\hvvarphi^{\:T}})\hat J\dhy\\
    - \frac{1}{i\omega} \int_{\hat \Sigma} \sigma(\vx,\vy(\hvy))
    \Big(\sum_k \vt_k\vt_k^T\big(\unit+\hat
    F(\hvy)^{-T}\hat\nabla\hvchi^{\:T}\big)\Big)
    \\
    \cdot\Big(
    \sum_k \vt_k\vt_k^T\hat
    F(\hvy)^{-T}\overline{\hat\nabla\hvvarphi^{\:T}}\Big)
    \|\hat F(\hvy)^{-T}\hvn\|_{\ell^2}\hat J \dhohy.
\end{multline}
Here, $\unit$ is the unit matrix. We note that $\sigma(\vx,\vy(\hvy))$
and $\eps(\vx,\vy(\hvy))$, still denote tensors acting on gradients in
transformed (non-reference) coordinates. Similarly, the effective
permittivity tensor is given by
\begin{multline}
  \label{eq:eff_ale}
  \eff_{ij}(\hvchi;\hvq) = \int_{\hat Y} \eps(\vx,\vy(\hvy))
  (\vec e_j+\hat F(\hvy)^{-T}\hat\nabla\hat\chi_j^{\:T})\cdot(\vec
  e_i+\hat F(\hvy)^{-T}\overline{\hat\nabla\hat\chi_i^{\:T}})\hat J\dhy
  \\
  - \frac{1}{i\omega} \int_{\hat\Sigma} \sigma(\vx,\vy(\hvy))
  \Big(\tve{j} + \sum_k \vt_k\vt_k^T\hat
  F(\hvy)^{-T}\hat\nabla\hvchi_j^{\:T}\Big)
  \\
  \cdot
  \Big(\tve{i} + \sum_k \vt_k\vt_k^T\hat
  F(\hvy)^{-T}\overline{\hat\nabla\hvchi_i^{\:T}}\Big)
  \|\hat F(\hvy)^{-T}\hvn\|_{\ell^2}\hat J \dhohy.
\end{multline}

%%%%%%%%%%%%%%%%%%%%%%%%%%%%%%%%%%%%%%%%%%%%%%%%%%%%%%%%%%%%%%%%%%%%%%%%%%%%%%%%
\subsection{Well-posedness, regularity and dependence on deformation field}
\label{subsec:well_pos}
We now examine the well-posedness and regularity of problem
\eqref{eq:cell_problem_ale}. To this end we introduce two transformed
tensors:
\begin{gather}
  \label{eq:transformed_tensors}
  \hat \eps(\vx,\hvy) := \hat F(\hvy)^{-1} \eps(\vx,\vy(\hvy)) \hat
  F(\hvy)^{-T}\hat J,
  \quad
  \hat{\sigma}(\vx,\hvy) := \sum_{mn}
  \hat{\sigma}_{mn}\,\hvt_m\hvt_n^T,
  \\\notag
  \text{where}\quad
  \hat{\sigma}_{mn} := \frac{\sigma_{mn}(\vx,\vy(\hvy))\|\hat
  F(\hvy)^{-T}\hvn\|_{\ell^2}\hat J} {\|\hat F(\hvy)\hvt_m\|^2_{\ell^2}\|
  \|\hat F(\hvy)\hvt_n\|^2_{\ell^2}},
\end{gather}
and $\sigma_{mn}$ is defined by ${\sigma}(\vx,\vy(\hvy)) =: \sum_{mn}
{\sigma}_{mn}\,\vt_m\vt_n^T.$ We make the following observations:

\begin{lemma}
  \label{lem:transformed_tensors_are_nice}
  Assume that $\hvq(\hvy)\in \vec D(\hat Y, \hat\Sigma)$
  and $0 < \delta \le \hat J(\hvy)$ is uniformly bounded. Then, the
  tensors $\hat\eps(\hvy)$ and $\hat\sigma(\hvy)$ are bounded, complex and
  tensor-valued functions and $\Re(\hat\eps(\hvy))$ and
  $\text{Im}{(\hat\sigma(\hvy))}$ are symmetric,
  $\text{Im}(\hat\eps(\hvy))$ and $\text{Re}{(\hat\sigma(\hvy))}$ are
  symmetric and uniformly elliptic with a constant depending on $\delta$.
\end{lemma}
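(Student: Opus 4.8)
The plan is to show that every property claimed for $\hat\eps$ and $\hat\sigma$ is inherited from the corresponding property of $\eps$ and $\sigma$ supplied by Theorem~\ref{thm:well_posedness}, the transformation contributing only bounded, real, geometric factors.

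First I would collect the elementary bounds on the transformation data. By Remark~\ref{rem:control_space} a control $\hvq\in\vec D(\hat Y,\hat\Sigma)$ is Lipschitz, hence $\hat\nabla\hvq\in L^\infty$ and $\hat F=\unit+\hat\nabla\hvq$ is uniformly bounded, say $\|\hat F\|\le C$ a.e. Combining $\hat J\ge\delta>0$ with Cramer's rule $\hat F^{-1}=\operatorname{adj}(\hat F)/\hat J$ shows that $\hat F^{-1}$ (and $\hat F^{-T}$) is uniformly bounded as well, since the entries of the adjugate are polynomials in the bounded entries of $\hat F$ while $\hat J$ stays above $\delta$. These two facts yield the two-sided bounds $\|\hat F^{-T}\hvn\|_{\ell^2}\ge 1/C$ and $1/\|\hat F^{-1}\|\le\|\hat F\hvt_k\|_{\ell^2}\le C$ for the unit fields $\hvn,\hvt_k$, which control every geometric weight in \eqref{eq:transformed_tensors}. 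Boundedness of $\hat\eps$ and of each coefficient $\hat\sigma_{mn}$ is then immediate, the denominator $\|\hat F\hvt_m\|_{\ell^2}^2\|\hat F\hvt_n\|_{\ell^2}^2$ being bounded away from zero.

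For $\hat\eps$ I would exploit that $\hat F$ and $\hat J$ are real, so $\Re\hat\eps=\hat J\,\hat F^{-1}(\Re\eps)\hat F^{-T}$ and $\Im\hat\eps=\hat J\,\hat F^{-1}(\Im\eps)\hat F^{-T}$; that is, $\hat\eps$ is a real congruence transform of $\eps$. Since a congruence $A\mapsto\hat F^{-1}A\hat F^{-T}$ preserves symmetry, symmetry of $\Re\eps$ and $\Im\eps$ transfers directly. For ellipticity of $\Im\hat\eps$ I would estimate, for real $\xi$, $\xi^{T}(\Im\hat\eps)\xi=\hat J\,(\hat F^{-T}\xi)^{T}(\Im\eps)(\hat F^{-T}\xi)\ge\delta c\,\|\hat F^{-T}\xi\|_{\ell^2}^2\ge(\delta c/C^2)\|\xi\|_{\ell^2}^2$, where $c$ is the ellipticity constant of $\Im\eps$ and $\|\hat F^{-T}\xi\|_{\ell^2}\ge\|\xi\|_{\ell^2}/C$; this is the step in which the constant acquires its dependence on $\delta$.

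For $\hat\sigma$ I would write $\sigma_{mn}=\vt_m^{T}\sigma\vt_n$ in the deformed orthonormal tangent frame. The scalar weight $\|\hat F^{-T}\hvn\|_{\ell^2}\hat J/(\|\hat F\hvt_m\|_{\ell^2}^2\|\hat F\hvt_n\|_{\ell^2}^2)$ is real and symmetric under $m\leftrightarrow n$, so symmetry of $\Re\sigma$ and $\Im\sigma$ transfers to $\Re\hat\sigma$ and $\Im\hat\sigma$ coefficientwise, which, since $\{\hvt_m\}$ is orthonormal, is exactly symmetry of the tensors. For ellipticity of $\Re\hat\sigma$ on the reference tangent plane I would substitute $\tilde\zeta_m=\zeta_m/\|\hat F\hvt_m\|_{\ell^2}^2$ to obtain $\sum_{mn}\Re\hat\sigma_{mn}\zeta_m\zeta_n=\|\hat F^{-T}\hvn\|_{\ell^2}\hat J\sum_{mn}\Re\sigma_{mn}\tilde\zeta_m\tilde\zeta_n\ge\|\hat F^{-T}\hvn\|_{\ell^2}\hat J\,c\,|\tilde\zeta|^2$, and then bound below using $\|\hat F\hvt_m\|_{\ell^2}\le C$, $\|\hat F^{-T}\hvn\|_{\ell^2}\ge1/C$ and $\hat J\ge\delta$, giving a constant of the form $\delta c/C^{5}$. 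The only step requiring genuine care is this $\hat\sigma$ computation: the conductivity is expanded componentwise in the deformed frame $\{\vt_m\}$ whereas $\hat\sigma$ is assembled in the reference frame $\{\hvt_m\}$, and one must track the rescaling $\tilde\zeta_m=\zeta_m/\|\hat F\hvt_m\|_{\ell^2}^2$ to verify that the mixed-frame weights neither break the symmetry nor degrade the ellipticity constant below a $\delta$-dependent threshold.
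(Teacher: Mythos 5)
Your proposal is correct and follows essentially the same route as the paper's proof: symmetry is inherited because congruence by the real matrix $\hat F^{-T}$ and multiplication by the real scalar weights preserve it, and ellipticity follows from that of $\Im\eps$ and $\Re\sigma$ together with the lower bound $\hat J\ge\delta$. You merely make explicit two points the paper leaves implicit — the uniform bound on $\hat F^{-1}$ via Cramer's rule and $\hat J\ge\delta$, and the frame rescaling $\tilde\zeta_m=\zeta_m/\|\hat F\hvt_m\|_{\ell^2}^2$ in the $\hat\sigma$ ellipticity estimate — both of which are sound.
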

\begin{lemma}
  \label{lem:equivalence}
  The bilinear term \eqref{eq:cell_problem_ale_bilinear} can be
  equivalently written as follows:
  \begin{multline}
    \label{eq:cell_problem_ale_ng}
    E(\hvchi,\hvvarphi;\hvq) :=
    \int_{\hat Y} \hat\eps(\vx,\hvy)
    \big(\hat F(\hvy)^T + \hat\nabla\hvchi^{\:T}\big)
    \cdot\overline{\hat\nabla{\hvvarphi}^{\:T}}\dhy
    \\
    - \frac{1}{i\omega} \int_{\hat \Sigma} \hat\sigma(\vx,\hvy)
    \big(\htangent{(\hat F(\hvy)^T)} +
    \htnabla \hvchi^{\:T}\big)\cdot\overline{\htnabla\hvvarphi^{\:T}} \dhohy.
  \end{multline}
  Similarly, \eqref{eq:eff_ale} takes the form:
  \begin{multline}
    \label{eq:eff_ale_ng}
    \eff_{ij}(\hvchi;\hvq) =
    \int_{\hat Y} \hat\eps(\vx,\hvy)
    \big(\hat F(\hvy)^T\vec e_j + \hat\nabla\hvchi_j^{\:T}\big)
    \cdot\big(\hat F(\hvy)^T\vec e_i +
    \overline{\hat\nabla\hvchi_i^{\:T}}\big)
    \dhy
    \\
    - \frac{1}{i\omega} \int_{\hat \Sigma} \hat\sigma(\vx,\hvy)
    \big(\htangent{(\hat F(\hvy)^T\vec e_j)} + \htnabla
    \hvchi_j^{\:T}\big)
    \cdot
    \big(\htangent{(\hat F(\hvy)^T\vec e_i)} +
    \overline{\htnabla \hvchi_i^{\:T}}\big) \dhohy.
  \end{multline}
\end{lemma}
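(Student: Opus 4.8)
The plan is to verify both identities by a direct, pointwise manipulation of the integrands, absorbing every geometric scaling factor into the transformed tensors~\eqref{eq:transformed_tensors}. I would treat the volume contribution in \eqref{eq:cell_problem_ale_bilinear} first. The key observation is the algebraic factorization
\[
  \unit + \hat F(\hvy)^{-T}\hat\nabla\hvchi^{\:T}
  = \hat F(\hvy)^{-T}\big(\hat F(\hvy)^{T} + \hat\nabla\hvchi^{\:T}\big),
\]
which isolates a single factor $\hat F(\hvy)^{-T}$ in front of the trial term, while the test term $\hat F(\hvy)^{-T}\overline{\hat\nabla\hvvarphi^{\:T}}$ already carries such a factor. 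Moving the factor $\hat F(\hvy)^{-T}$ off the test term across the matrix (Frobenius) inner product via the adjoint identity $\langle A X, Y\rangle = \langle X, A^{T} Y\rangle$ replaces it by $\hat F(\hvy)^{-1}$ acting on the trial term; together with the $\hat F(\hvy)^{-T}$ already multiplying $\eps$ this produces $\hat F(\hvy)^{-1}\eps\,\hat F(\hvy)^{-T}$ acting on the now bare gradients, and absorbing the volume Jacobian $\hat J$ gives exactly $\hat\eps = \hat F^{-1}\eps\,\hat F^{-T}\hat J$ from~\eqref{eq:transformed_tensors}. This reproduces the first line of~\eqref{eq:cell_problem_ale_ng}.

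For the surface contribution I would apply the same factorization and then invoke Lemma~\ref{lem:tangential_transformation}, which rewrites the projected quantity through the reference frame as
\[
  \sum_k \vt_k\vt_k^{T}\,\hat F(\hvy)^{-T}\hat\nabla\hvchi^{\:T}
  = \sum_k \frac{\hat F(\hvy)\hvt_k}{\|\hat F(\hvy)\hvt_k\|^{2}}\,
    \big(\hvt_k\cdot\hat\nabla\hvchi^{\:T}\big),
\]
so that the deformed tangential gradients are expressed purely in terms of the reference tangential components $\hvt_k\cdot\hat\nabla\hvchi^{\:T}$. Writing $\sigma(\vx,\vy(\hvy)) = \sum_{mn}\sigma_{mn}\vt_m\vt_n^{T}$ and substituting $\vt_k = \hat F(\hvy)\hvt_k/\|\hat F(\hvy)\hvt_k\|$, each contraction $\vt_n\cdot(\,\cdot\,)$ collapses to a single reference component divided by $\|\hat F(\hvy)\hvt_n\|$, using the orthonormality of the deformed tangent frame asserted in Lemma~\ref{lem:transformation}. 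Collecting these normalizations together with the Nanson/area factor $\|\hat F(\hvy)^{-T}\hvn\|\hat J$ from~\eqref{eq:transformed_volume_surface} into the scalar coefficients $\hat\sigma_{mn}$, and observing that the identity block $\unit$ is carried to $\htangent{(\hat F(\hvy)^{T})}$ under the very same substitution, yields the second line of~\eqref{eq:cell_problem_ale_ng} with $\hat\sigma = \sum_{mn}\hat\sigma_{mn}\hvt_m\hvt_n^{T}$.

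The identity~\eqref{eq:eff_ale_ng} for $\eff_{ij}$ then follows from the identical pointwise manipulations, simply replacing the matrix $\unit$ and the vector-valued test field by the constant columns $\vec e_j$ and $\vec e_i$ and reading the inner product as the Euclidean one; in particular the source column $\vec e_j$ is carried to $\htangent{(\hat F(\hvy)^{T}\vec e_j)}$, matching the boundary terms in~\eqref{eq:eff_ale_ng}. I expect the main obstacle to be the surface-term bookkeeping: after the factorization the tangential quantities are expressed through the unnormalized images $\hat F(\hvy)\hvt_k$, whose lengths $\|\hat F(\hvy)\hvt_k\|$ generally differ, so one must track these normalizations carefully and rely on the orthonormality of the deformed tangent frame $\{\vt_k\}$ from Lemma~\ref{lem:transformation} (so that the off-diagonal contractions $\vt_m\cdot\vt_n$ with $m\neq n$ drop out) in order for every geometric factor—the $\|\hat F(\hvy)\hvt_k\|$, the area factor $\|\hat F(\hvy)^{-T}\hvn\|$, and the Jacobian $\hat J$—to recombine precisely into the coefficients $\hat\sigma_{mn}$. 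By contrast, the volume term and the passage to~\eqref{eq:eff_ale_ng} are essentially routine.
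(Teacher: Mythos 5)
Your proposal is correct and follows essentially the same route as the paper's own proof: factor $\unit+\hat F(\hvy)^{-T}\hat\nabla\hvchi^{\:T}=\hat F(\hvy)^{-T}\big(\hat F(\hvy)^{T}+\hat\nabla\hvchi^{\:T}\big)$, move the remaining $\hat F(\hvy)^{-T}$ across the inner product to assemble $\hat\eps=\hat F^{-1}\eps\,\hat F^{-T}\hat J$, and for the surface term expand $\sigma=\sum_{mn}\sigma_{mn}\vt_m\vt_n^T$, apply Lemma~\ref{lem:tangential_transformation} together with the orthonormality of the deformed frame, and absorb the normalization and Nanson factors into $\hat\sigma_{mn}$. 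The only difference is presentational (you make the adjoint identity for the Frobenius product explicit where the paper says ``after some more simplifications''), so the two arguments coincide.
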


Detailed proofs of Lemmas \ref{lem:transformed_tensors_are_nice} and
\ref{lem:equivalence} are given in Appendix \ref{app:estimates}.
Lemma~\ref{lem:equivalence} implies, in particular, that the deformed cell
problem \eqref{eq:cell_problem_ale} has the same structure as
\eqref{eq:cell_problem} with a slightly modified forcing. We can thus
summarize:

\begin{theorem}
  \label{thm:well_posedness_ale}
  Under the assumptions on the deformation field $\hvq(\hvy)$ as stated in
  Lemma~\ref{lem:transformed_tensors_are_nice} problem
  \eqref{eq:cell_problem_ale} is well-posed.
\end{theorem}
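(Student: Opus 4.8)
The plan is to reduce the claim directly to the well-posedness of the undeformed cell problem~\eqref{eq:cell_problem} already provided by Theorem~\ref{thm:well_posedness}. The key observation, furnished by Lemma~\ref{lem:equivalence}, is that after pullback to the reference configuration the bilinear term~\eqref{eq:cell_problem_ale_ng} is \emph{structurally identical} to the form of the original problem~\eqref{eq:cell_problem}, with $\eps$ and $\sigma$ replaced by the transformed tensors $\hat\eps$ and $\hat\sigma$ and with $\hat F(\hvy)^T$ appearing in place of the unit-vector data. I would therefore first split the equation $E(\hvchi,\hvvarphi;\hvq)=0$ into a sesquilinear form $a(\hvchi,\hvvarphi)$, collecting all terms carrying $\hat\nabla\hvchi^{\,T}$ and $\htnabla\hvchi^{\,T}$, together with a right-hand side $\ell(\hvvarphi)$, collecting the remaining terms built from $\hat F(\hvy)^T$ and its tangential projection $\htangent{(\hat F(\hvy)^T)}$. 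This recasts~\eqref{eq:cell_problem_ale} as the variational problem of finding $\hvchi\in\vec H(\hat Y,\hat\Sigma)$ with $a(\hvchi,\hvvarphi)=\ell(\hvvarphi)$ for all $\hvvarphi\in\vec H(\hat Y,\hat\Sigma)$.

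The second step is to verify that $a$ and $\ell$ meet the hypotheses of the abstract existence result underlying Theorem~\ref{thm:well_posedness}. For $a$ this is immediate, since it agrees with the form of~\eqref{eq:cell_problem} up to the replacement $(\eps,\sigma)\mapsto(\hat\eps,\hat\sigma)$, and Lemma~\ref{lem:transformed_tensors_are_nice} guarantees that under the standing assumptions $\hvq\in\vec D(\hat Y,\hat\Sigma)$ and $0<\delta\le\hat J$ the transformed tensors are bounded and possess exactly the same symmetry and uniform-ellipticity structure as the original coefficients. Consequently the continuity and G\aa{}rding-type coercivity estimates used for~\eqref{eq:cell_problem} transfer verbatim: testing with $\hvvarphi=\hvchi$ and extracting the imaginary part, the uniform ellipticity of $\Im(\hat\eps)$ and $\Re(\hat\sigma)$ bounds the gradient energy from below with a constant depending only on $\delta$, and together with the periodic Poincar\'e inequality on the zero-mean subspace this yields coercivity.

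It then remains to check that $\ell$ is a bounded antilinear functional on $\vec H(\hat Y,\hat\Sigma)$. Since $\hvq\in\vec W^{1,\infty}_0(\hat Y)$, the transformation gradient $\hat F=\unit+\hat\nabla\hvq$ and hence $\hat F^T$ lie in $L^\infty(\hat Y)$, while the tangential projection $\htangent{(\hat F^T)}$ is bounded on $\hat\Sigma$ by the defining condition of the control space $\vec D(\hat Y,\hat\Sigma)$. Combined with the $L^\infty$-bounds on $\hat\eps$ and $\hat\sigma$ from Lemma~\ref{lem:transformed_tensors_are_nice} and the Cauchy--Schwarz inequality, this yields $|\ell(\hvvarphi)|\le C\,\|\hvvarphi\|_{\vec H(\hat Y,\hat\Sigma)}$. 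With continuity, coercivity, and a bounded right-hand side in place, the complex Lax--Milgram (Lions) lemma delivers a unique corrector $\hvchi$, up to the additive constant inherent to the periodic space, which proves well-posedness.

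The step I expect to require the most care is the bookkeeping that certifies the reduction rather than any individual estimate: one must confirm that the \emph{entire} functional-analytic setting---the periodic Sobolev space, the normalization removing the constant kernel, and the treatment of the surface term---is genuinely preserved by the pullback, so that invoking Theorem~\ref{thm:well_posedness} is legitimate and not merely suggestive. Once Lemmas~\ref{lem:transformed_tensors_are_nice} and~\ref{lem:equivalence} are available, however, no new analytic obstacle arises, and the proof amounts to verifying that the hypotheses of the already-established result carry over to the transformed coefficients and modified forcing.
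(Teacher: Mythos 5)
Your proposal is correct and follows essentially the same route as the paper: reduce \eqref{eq:cell_problem_ale} to the structure of \eqref{eq:cell_problem} via Lemma~\ref{lem:equivalence}, verify the coefficient hypotheses with Lemma~\ref{lem:transformed_tensors_are_nice}, and invoke Theorem~\ref{thm:well_posedness}. The only difference is that you spell out the Lax--Milgram-type verification (continuity, coercivity via the imaginary part, boundedness of the modified forcing) that the paper leaves implicit by citing the earlier well-posedness result directly.
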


\begin{proof}
  Lemma~\ref{lem:tangential_transformation} ensures that equation
  \eqref{eq:cell_problem_ale} is the same as \eqref{eq:cell_problem} with
  modified material tensors and a modified forcing.
  Lemma~\ref{lem:transformed_tensors_are_nice} ensures that all assumptions
  on the material tensors stated in Theorem~\ref{thm:well_posedness} hold
  true. Well-posedness is thus an immediate consequence of
  Theorem~\ref{thm:well_posedness}.
\end{proof}

We finish the discussion by introducing a number of robustness and
regularity results that will be used later to justify the optimization
approach.
\begin{theorem}
  \label{thm:robustness}
  For the unique solution $\hvchi \in \vec H(\hat Y, \hat\Sigma)$ to
  \eqref{eq:cell_problem_ale}, we have the following a priori estimate:
  \begin{multline*}
    \|\hat\nabla \hvchi\|^2_{L^2(\hat Y)}
    +\frac1\omega\|\htnabla \hvchi\|^2_{L^2(\hat\Sigma)}
    \\
    \le C\, \left(
    \|\hat\eps(\vx,\hvy)\|^2_{L^\infty(\hat Y)}
    \|\hat F^T\|^2_{L^2(\hat Y)}
    + \frac{1}{\omega}
    \|\hat \sigma(\vx,\hvy)\|^2_{L^\infty(\hat\Sigma)}
    \|\hat F^T\|^2_{L^2(\hat\Sigma)}
    \right)
  \end{multline*}
  for a constant $C>0$ only depending on $\hat\Sigma$ and the lower bound
  $\delta$ of $\hat J(\hat \vx)$ as defined in
  Lemma~\ref{lem:transformed_tensors_are_nice}.
\end{theorem}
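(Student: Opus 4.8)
The plan is to run a standard Lax--Milgram/Gårding-type energy argument on the equivalent form \eqref{eq:cell_problem_ale_ng} of the bilinear relation, exploiting the ellipticity provided by Lemma~\ref{lem:transformed_tensors_are_nice}. First I would split \eqref{eq:cell_problem_ale_ng} into a bilinear form $a(\hvchi,\hvvarphi)$ collecting the terms involving $\hat\nabla\hvchi$ and $\htnabla\hvchi$, and a forcing functional collecting the remaining terms proportional to $\hat F(\hvy)^T$. Setting $E(\hvchi,\hvvarphi;\hvq)=0$ and isolating, the cell problem reads $a(\hvchi,\hvvarphi)=\ell(\hvvarphi)$ for all test functions $\hvvarphi$, with
\[
\ell(\hvvarphi)= -\int_{\hat Y}\hat\eps\,\hat F(\hvy)^T\cdot\overline{\hat\nabla\hvvarphi^{\:T}}\dhy + \frac{1}{i\omega}\int_{\hat\Sigma}\hat\sigma\,\htangent{(\hat F(\hvy)^T)}\cdot\overline{\htnabla\hvvarphi^{\:T}}\dhohy .
\]

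Next I would test with $\hvvarphi=\hvchi$ and take the imaginary part of the resulting scalar identity. The point is that the factor $-1/(i\omega)=i/\omega$ in front of the surface term rotates the contributions so that, after taking $\Im$, it is precisely the uniformly elliptic parts $\Im\hat\eps$ and $\Re\hat\sigma$ (symmetric and positive definite by Lemma~\ref{lem:transformed_tensors_are_nice}) that survive. Concretely, for symmetric real matrices the scalars $\langle\Re\hat\eps\,\vec v,\vec v\rangle$ and $\langle\Im\hat\eps\,\vec v,\vec v\rangle$ are real, so $\Im\langle\hat\eps\,\vec v,\vec v\rangle=\langle\Im\hat\eps\,\vec v,\vec v\rangle$, and likewise $\Im\big(\tfrac{i}{\omega}\langle\hat\sigma\,\vec v,\vec v\rangle\big)=\tfrac{1}{\omega}\langle\Re\hat\sigma\,\vec v,\vec v\rangle$. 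Integrating and using uniform ellipticity yields the coercivity estimate
\[
\Im\,a(\hvchi,\hvchi)\;\ge\; c\,\|\hat\nabla\hvchi\|^2_{L^2(\hat Y)} + \frac{c}{\omega}\,\|\htnabla\hvchi\|^2_{L^2(\hat\Sigma)},
\]
with a constant $c>0$ depending only on $\delta$ and $\hat\Sigma$ through the ellipticity constants.

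For the right-hand side I would bound $|\Im\,\ell(\hvchi)|\le|\ell(\hvchi)|$ by Hölder's inequality, pulling out $\|\hat\eps\|_{L^\infty(\hat Y)}$ and $\|\hat\sigma\|_{L^\infty(\hat\Sigma)}$, using that the tangential projection is norm non-increasing so that $\|\htangent{(\hat F(\hvy)^T)}\|_{L^2(\hat\Sigma)}\le\|\hat F(\hvy)^T\|_{L^2(\hat\Sigma)}$, and Cauchy--Schwarz to produce the factors $\|\hat\nabla\hvchi\|_{L^2(\hat Y)}$ and $\|\htnabla\hvchi\|_{L^2(\hat\Sigma)}$. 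A weighted Young's inequality, carrying the surface term with its natural $1/\omega$ weight, then absorbs these gradient norms into the left-hand side, leaving exactly the claimed combination $\|\hat\eps\|^2_{L^\infty(\hat Y)}\|\hat F^T\|^2_{L^2(\hat Y)}+\tfrac{1}{\omega}\|\hat\sigma\|^2_{L^\infty(\hat\Sigma)}\|\hat F^T\|^2_{L^2(\hat\Sigma)}$ on the right, with a new constant $C$ inheriting the dependence on $\delta$ and $\hat\Sigma$.

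The main obstacle is the careful bookkeeping of real and imaginary parts together with the sign of the $i/\omega$ prefactor: one must verify that taking $\Im$ of the tested identity produces the \emph{same} sign on both the volume and surface quadratic forms, so that the two elliptic contributions add rather than cancel, and that the $\omega$-weighting is tracked consistently from the coercivity bound through Young's inequality to reproduce the stated weights. Once this sign and weight accounting is pinned down, the remainder is routine. An alternative to taking pure imaginary parts would be to test against a complex multiple $e^{i\theta}\hvchi$ and optimize over $\theta$, but given the explicit symmetry/ellipticity split of Lemma~\ref{lem:transformed_tensors_are_nice} the direct $\Im$ argument is cleanest.
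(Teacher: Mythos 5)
Your proposal is correct and follows essentially the same route as the paper: test the equivalent form \eqref{eq:cell_problem_ale_ng} with the solution itself, take the imaginary part so that the symmetric, uniformly elliptic tensors $\Im\hat\eps$ and $\Re\hat\sigma$ from Lemma~\ref{lem:transformed_tensors_are_nice} furnish coercivity, and absorb the forcing terms via Cauchy--Schwarz and Young's inequality with the natural $1/\omega$ weighting. The sign and weight bookkeeping you flag as the main obstacle works out exactly as you describe, so no gap remains.
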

\begin{proof}
  The statement is a consequence of
  Lemmas~\ref{lem:transformed_tensors_are_nice} and~\ref{lem:equivalence}.
  We start by testing \eqref{eq:cell_problem_ale_ng} with
  $\hvvarphi=\overline{\hvchi(\hvq)}$ and taking the imaginary
  part. Recalling that $\Re\hat\eps$, $\Im\hat\eps$, $\Re\hat\sigma$,
  $\Im\hat\sigma$ are symmetric by virtue of
  Lemma~\ref{lem:transformed_tensors_are_nice} we arrive at
  \begin{multline*}
    \int_{\hat Y} \Im\hat\eps\,
    \hat\nabla\hvchi^{\:T}
    \cdot\overline{\hat\nabla{\hvchi}^{\:T}}\dhy
    + \frac{1}{\omega} \int_{\hat \Sigma} \Re\hat\sigma\,
    \htnabla \hvchi^{\:T}\cdot\overline{\htnabla\hvchi^{\:T}} \dhohy
    \\
    =\;
    -\Im\Big\{\int_{\hat Y} \hat\eps\, \big(\hat F(\hvy)^T\big)
    \cdot\overline{\hat\nabla{\hvchi}^{\:T}}\dhy
    + \frac{1}{\im\omega} \int_{\hat \Sigma} \hat\sigma\,
    \htangent{\big(\hat F(\hvy)^T \big)}
  \cdot\overline{\htnabla\hvchi^{\:T}} \dhohy\Big\}.
  \end{multline*}
  The statement now follows from using Young's inequality for both terms on
  the right side and uniform ellipticity of $\Im\hat\eps$ and
  $\Re\hat\sigma$ with a $\delta$-dependent constant that was
  established in Lemma~\ref{lem:transformed_tensors_are_nice}.
\end{proof}

\begin{theorem}
    \label{thm:dependence}
    Under the assumptions on the deformation field $\hvq(\hvy)$ as stated
    in Lemma~\ref{lem:transformed_tensors_are_nice}, the corrector
    $\hvchi(\hvq)$ given by the cell problem $\eqref{eq:cell_problem_ale}$
    depends at least Lipschitz-continuously on $\hvq$. More precisely, let
    $\hvq_1, \hvq_2 \in \vec D(\hat Y, \hat\Sigma)$ be arbitrarily
    chosen such that the assumptions of
    Lemma~\ref{lem:transformed_tensors_are_nice} are satisfied. Let
    $\hvchi_1, \hvchi_2 \in \vec H(\hat Y, \hat\Sigma)$ denote the
    solutions to \eqref{eq:cell_problem_ale} for deformation fields
    $\hvq_1$ and $\hvq_2$, respectively. Then, assuming that $\hvq_1$ is
    suitably close to $\hvq_2$, we have:
    \begin{multline}
        \|\hat\nabla (\hvchi_1 - \hvchi_2)\|_{L^2(\hat Y)}^2 +
        \frac{1}{\omega}
        \|\htnabla (\hvchi_1 - \hvchi_2)\|_{L^2(\hat \Sigma)}^2
        \\
        \le\;
        C(\hvq)\,\Big\{
        \|\hat\eps_1(\vx,\hvy) - \hat\eps_2(\vx,\hvy)\|_{L^{\infty}(\hat
        Y)}^2
        +
        \frac1\omega\, \|\hat\sigma_1(\vx,\hvy) -
        \hat\sigma_2(\vx,\hvy)\|_{L^{\infty}(\hat\Sigma)}^2
        \Big\}
        \\
        \le\;C(\hvq_2)\,\Big\{\|\hvq_1 - \hvq_2\|^2_{L^\infty(\hat Y)} +
        \|\hvq_1 - \hvq_2\|^2_{L^\infty(\hat \Sigma)}\Big\},
    \end{multline}
    where the constant $C$ only depends on $\hat F_2(\hvy)$ and thus on
    $\hvq_2$.
\end{theorem}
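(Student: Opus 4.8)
The goal is a stability estimate for the corrector in terms of the deformation. The plan is to exploit the fact—already established in Lemma~\ref{lem:equivalence}—that the deformed cell problem has exactly the structure of the original problem, but with transformed tensors $\hat\eps_j$, $\hat\sigma_j$ and a transformed forcing built from $\hat F_j^T$, for $j=1,2$. So I would subtract the two weak formulations and test with the difference of the correctors. Concretely, writing $\hvchi_1-\hvchi_2$ for the state difference, the plan is to set up the identity

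\begin{align*}
  E_2(\hvchi_1-\hvchi_2,\hvvarphi;\hvq_2)
  \;=\; E_2(\hvchi_1,\hvvarphi;\hvq_2) - E_1(\hvchi_1,\hvvarphi;\hvq_1),
\end{align*}

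where $E_j$ denotes the bilinear form \eqref{eq:cell_problem_ale_ng} associated to $\hvq_j$, and where I have added and subtracted $E_2(\hvchi_1,\hvvarphi;\hvq_2)$ while using that $E_1(\hvchi_1,\hvvarphi;\hvq_1)=0$ and $E_2(\hvchi_2,\hvvarphi;\hvq_2)=0$. The right-hand side then collects all the terms in which $\hat\eps_1,\hat\sigma_1,\hat F_1$ differ from $\hat\eps_2,\hat\sigma_2,\hat F_2$; crucially these are weighted by the (already bounded) gradient $\hat\nabla\hvchi_1^{\:T}$ and by the forcing terms $\hat F_j^T$, rather than by the unknown difference.

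Next I would choose $\hvvarphi=\overline{\hvchi_1-\hvchi_2}$ and take the imaginary part, exactly as in the proof of Theorem~\ref{thm:robustness}. The left-hand side is then coercive by the uniform ellipticity of $\Im\hat\eps_2$ and $\Re\hat\sigma_2$ furnished by Lemma~\ref{lem:transformed_tensors_are_nice}, yielding a lower bound of the form $c\big(\|\hat\nabla(\hvchi_1-\hvchi_2)\|_{L^2(\hat Y)}^2 + \tfrac1\omega\|\htnabla(\hvchi_1-\hvchi_2)\|_{L^2(\hat\Sigma)}^2\big)$ with $c=c(\delta,\hat\Sigma)$. For the right-hand side I would split each integrand into a factor carrying $(\hat\eps_1-\hat\eps_2)$ or $(\hat\sigma_1-\hat\sigma_2)$ and a factor carrying the test function's gradient, apply Cauchy--Schwarz and then Young's inequality, absorbing the $\hat\nabla(\hvchi_1-\hvchi_2)$ factor into the coercive left-hand side and using the a priori bound from Theorem~\ref{thm:robustness} to control $\|\hat\nabla\hvchi_1\|_{L^2}$ and the $\hat F_1^T$, $\hat F_2^T$ forcing terms by constants depending only on $\hvq_2$ (here the smallness hypothesis ``$\hvq_1$ suitably close to $\hvq_2$'' guarantees these remain uniformly bounded and that $\hat J_1$ stays above $\delta$). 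This gives the first inequality of the theorem, with the $L^\infty$-differences of the transformed tensors on the right.

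The second inequality—passing from $\|\hat\eps_1-\hat\eps_2\|_{L^\infty}$, $\|\hat\sigma_1-\hat\sigma_2\|_{L^\infty}$ to $\|\hvq_1-\hvq_2\|_{L^\infty}$—is where the real work lies, and I expect it to be \textbf{the main obstacle}. The transformed tensors are built from $\hat F_j=\unit+\hat\nabla\hvq_j$ via \eqref{eq:transformed_tensors}, so they involve $\hat F_j^{-1}$, $\hat J_j=\det\hat F_j$, the normalization factors $\|\hat F_j^{-T}\hvn\|_{\ell^2}$ and $\|\hat F_j\hvt_m\|_{\ell^2}^2$, and the original material parameters evaluated at the \emph{deformed} point $\vy_j(\hvy)=\hvy+\hvq_j(\hvy)$. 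To bound their difference by $\|\hvq_1-\hvq_2\|_{L^\infty}$ I would argue that each of these building blocks is a locally Lipschitz function of $\hat F_j$ (inversion and determinant are smooth on the region $\hat J\ge\delta$, which the closeness assumption keeps us inside), and that the $\vy_j$-dependence of $\eps,\sigma$ is Lipschitz provided $\eps,\sigma$ are Lipschitz in their second argument. The delicate point is that the differences of $\hat F_j$ involve $\hat\nabla\hvq_j$, i.e. \emph{gradients} of the controls, whereas the stated bound is only in terms of $\|\hvq_1-\hvq_2\|_{L^\infty}$ and not $\|\hat\nabla(\hvq_1-\hvq_2)\|_{L^\infty}$; reconciling this—presumably via the $\vec D(\hat Y,\hat\Sigma)\subset\vec W^{1,\infty}$ embedding of Remark~\ref{rem:control_space} together with a bound on the $W^{1,\infty}$-norm of $\hvq_1-\hvq_2$ absorbed into the constant $C(\hvq_2)$—is the step that must be handled carefully, and is presumably where the detailed appendix argument is needed.
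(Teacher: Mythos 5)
Your proposal follows essentially the same route as the paper's proof in Appendix~\ref{app:estimates}: subtract the two weak formulations so that the energy of $\hvchi_1-\hvchi_2$ is driven by terms carrying $(\hat\eps_1-\hat\eps_2)$, $(\hat\sigma_1-\hat\sigma_2)$ and the difference of the forcings, test with the conjugate difference, use the ellipticity from Lemma~\ref{lem:transformed_tensors_are_nice} together with Young's inequality and the a~priori bound of Theorem~\ref{thm:robustness}; the only (immaterial) difference is that you weight the tensor differences by $\hat\nabla\hvchi_1$ and take coercivity from the $\hvq_2$-problem, whereas the paper does the mirror image with $\hat\nabla\hvchi_2$ and the $\hvq_1$-form. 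The difficulty you flag in the final inequality is genuine, but the paper does not resolve it in any more detail than you do --- it simply invokes the analytic dependence of the transformed tensors \eqref{eq:transformed_tensors} on $\hvq$, so you are not missing a step that the paper actually supplies.
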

A detailed proof of Theorem~\ref{thm:dependence} is given in
Appendix~\ref{app:estimates}.
\begin{remark}
  The Lipschitz continuity of $\hvq \mapsto \hvchi(\hvq)$ implies
  that this mapping is already G\^ateaux differentiable almost
  everywhere, see, e.g.,~\cite{BogachevMayerWolf:1996}. This justifies to
  take Gâteaux derivatives with respect to the control $\hvq$ in the adjoint
  formulation; see Section~\ref{sec:optimization}.
\end{remark}
\begin{theorem}
  \label{thm:regularity}
  Let $\hat \Sigma$ be a smooth, closed hypersurface, i.\,e.,
  $\partial\hat\Sigma = 0$. Suppose there exist a smooth, $\hat Y$-periodic
  extensions $\hat{\vec\tau}_i(\vx):\hat Y \to\R^n$ of the tangential
  fields $\hat{\vec\tau}_i$ with $|\hat{\vec\tau}_i(\vx)|\le 1$ for all
  $\vx\in\hat Y$.
  Then, under the assumptions stated in Theorem~\ref{thm:robustness} and
  provided that $\hat\varepsilon(\vx,\hvy)$ and $\hat\sigma(\vx,\hvy)$ are
  sufficiently regular the following stability estimate holds true:
  \begin{multline}
    \label{eq:regularity}
    \|\hat\nabla\htnabla\hvchi\|^2_{L^2(\hat Y)}
    +\frac1\omega\|\htnabla\htnabla\hvchi\|^2_{L^2(\hat\Sigma)}
    \\
    \le \;C\;
    \max\big\{
    \|\hat\eps(\vx,\hvy)\|^2_{W^{1,\infty}(\hat Y)}
    , \frac{1}{\omega}
    \|\hat \sigma(\vx,\hvy)\|^2_{W^{1,\infty}(\hat\Sigma)}\Big\}
    \\
    \times
    \Big\{
    \|\hat\eps(\vx,\hvy)\|^2_{W^{1,\infty}(\hat Y)}
    \|\hat F^T \vec e_i\|^2_{H^1(\hat Y)}
    + \frac{1}{\omega}
    \|\hat \sigma(\vx,\hvy)\|^2_{W^{1,\infty}(\hat\Sigma)}
    \|\hat F^T \vec e_i\|^2_{H^1(\hat\Sigma)}\Big\}.
  \end{multline}
  Here, the constant $C$ only depends on $\hat\Sigma$ and the chosen
  extension of $\hat{\vec\tau}$.
\end{theorem}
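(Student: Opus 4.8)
The plan is to establish \eqref{eq:regularity} by the method of tangential difference quotients (Nirenberg's translation method), adapted to a closed hypersurface. Since the estimate involves only \emph{tangential} second derivatives, I will differentiate the cell problem along the flow of a smooth tangential vector field rather than in a normal direction, which is exactly what the hypotheses of a closed $\hat\Sigma$ and of smooth periodic tangential extensions $\hvt_i$ with $|\hvt_i|\le1$ are for. Concretely, for each $\veta\in\{\hvt_1,\hvt_2\}$ let $\Phi_h$ denote the flow generated by $\veta$ and set $\tdiffop\hat\varphi(\hvy):=\tfrac1h\big(\hat\varphi(\Phi_h(\hvy))-\hat\varphi(\hvy)\big)$, with backward counterpart $\tnegdiffop$. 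Because $\veta$ is tangent to $\hat\Sigma$ and $\partial\hat\Sigma=0$, the flow maps $\hat\Sigma$ bijectively onto itself, so the summation-by-parts identity relating $\int\hat\varphi\,\tdiffop\hat\psi$ to $-\int(\tnegdiffop\hat\varphi)\,\hat\psi$ (up to a smooth Jacobian weight) holds on both $\hat Y$ and $\hat\Sigma$ \emph{without} boundary contributions. This closedness is the structural reason the argument works and is where the hypothesis $\partial\hat\Sigma=0$ enters.

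Next I would apply $\tdiffop$ to the reformulated cell problem \eqref{eq:cell_problem_ale_ng} of Lemma~\ref{lem:equivalence}, using summation by parts to transfer the difference quotient onto the test function. This produces an identity of the same structure as the original problem but for the unknown $\tdiffop\hvchi$, with an additional forcing built from (i) the difference quotients $\tdiffop\hat\eps$, $\tdiffop\hat\sigma$ of the transformed tensors from \eqref{eq:transformed_tensors} and (ii) the difference quotient $\tdiffop(\hat F^T)$ of the data, here applied component-wise so that the $i$-th corrector $\hat\chi_i$ carries forcing $\hat F^T\vec e_i$. By Lemma~\ref{lem:transformed_tensors_are_nice} the tensors are uniformly elliptic; under the $W^{1,\infty}$ assumption their difference quotients are bounded in $L^\infty$ uniformly in $h$ by the respective $W^{1,\infty}$-norms, while $\tdiffop(\hat F^T\vec e_i)$ is bounded in $L^2$ by $\|\hat F^T\vec e_i\|_{H^1}$. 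I would also record the commutators $[\tdiffop,\htnabla]$ and $[\tdiffop,\sum_k\vt_k\vt_k^T]$, which are of lower order and are controlled by the smoothness of the tangential extensions together with the first-order bound of Theorem~\ref{thm:robustness}.

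I then test the difference-quotient identity with $\hvvarphi=\overline{\tdiffop\hvchi}$ and take the imaginary part, exactly as in the proof of Theorem~\ref{thm:robustness}. Symmetry and uniform ellipticity of $\Im\hat\eps$ and $\Re\hat\sigma$ furnish a coercive lower bound of the form $\delta\big(\|\hat\nabla\tdiffop\hvchi\|_{L^2(\hat Y)}^2+\tfrac1\omega\|\htnabla\tdiffop\hvchi\|_{L^2(\hat\Sigma)}^2\big)$, while the right-hand side collects the forcing and commutator terms above. Using Young's inequality I absorb every term containing $\hat\nabla\tdiffop\hvchi$ into the left-hand side; the surviving terms are estimated by the product of a $\max$ of the coefficient $W^{1,\infty}$-norms (arising from $\tdiffop\hat\eps$, $\tdiffop\hat\sigma$) with the first-order quantities $\|\hat\nabla\hvchi\|_{L^2}$ and $\|\htnabla\hvchi\|_{L^2(\hat\Sigma)}$, which Theorem~\ref{thm:robustness} in turn bounds by the coefficient norms times $\|\hat F^T\vec e_i\|_{H^1}$. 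This reproduces precisely the product structure of the right-hand side of \eqref{eq:regularity}, and the resulting bound is uniform in $h$.

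Finally, by the standard difference-quotient lemma a uniform-in-$h$ $L^2$ bound on $\tdiffop(\hat\nabla\hvchi)$ yields the tangential derivative $\partial_{\veta}(\hat\nabla\hvchi)\in L^2$ with the same bound; summing over $\veta\in\{\hvt_1,\hvt_2\}$ and passing to $h\to0$ recovers $\hat\nabla\htnabla\hvchi$ on $\hat Y$ and $\htnabla\htnabla\hvchi$ on $\hat\Sigma$, giving \eqref{eq:regularity}. I expect the main obstacle to be the surface contribution: one must verify that $\Phi_h$ genuinely preserves $\hat\Sigma$ so that the summation by parts on $\hat\Sigma$ produces no uncontrolled terms, and one must bound in $L^\infty$ the difference quotients of the \emph{transformed} surface tensor $\hat\sigma$ from \eqref{eq:transformed_tensors}, which carries the $\hvq$-dependent geometric factors $\|\hat F^{-T}\hvn\|_{\ell^2}$ and $\|\hat F\hvt_k\|_{\ell^2}^2$, together with the commutator between $\tdiffop$ and the deformation-dependent tangential projector $\sum_k\vt_k\vt_k^T$. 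Controlling these geometric commutators, rather than the bulk estimate, is where the bulk of the technical work lies.
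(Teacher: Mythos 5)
Your proposal is correct and follows essentially the same route as the paper: a tangential difference quotient along the flow of the extended fields $\hat{\vec\tau}_i$, summation by parts on $\hat Y$ and on the closed hypersurface $\hat\Sigma$, a coercivity/Young's-inequality argument mirroring Theorem~\ref{thm:robustness}, control of the flow-induced commutators, and a final application of Theorem~\ref{thm:robustness} to the first-order terms before passing to the limit $h\to0$. The only cosmetic difference is that you move the difference quotient before testing with $\overline{\tdiffop\hvchi}$, whereas the paper tests directly with $\tnegdiffop\tdiffop\overline{\hvchi}$ and then transfers the operator; these are equivalent.
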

A detailed proof of Theorem~\ref{thm:regularity} is given in
Appendix~\ref{app:regularity}.

%%%%%%%%%%%%%%%%%%%%%%%%%%%%%%%%%%%%%%%%%%%%%%%%%%%%%%%%%%%%%%%%%%%%%%%%%%%%%%%%
%%%%%%%%%%%%%%%%%%%%%%%%%%%%%%%%%%%%%%%%%%%%%%%%%%%%%%%%%%%%%%%%%%%%%%%%%%%%%%%%
%%%%%%%%%%%%%%%%%%%%%%%%%%%%%%%%%%%%%%%%%%%%%%%%%%%%%%%%%%%%%%%%%%%%%%%%%%%%%%%%
%%%%%%%%%%%%%%%%%%%%%%%%%%%%%%%%%%%%%%%%%%%%%%%%%%%%%%%%%%%%%%%%%%%%%%%%%%%%%%%%

\section{Shape optimization problem and adjoint formulation}
\label{sec:optimization}
The previous section establishes that the effective permittivity
tensor $\eff(\hvchi;\hvq)$ given by \eqref{eq:eff_ale_ng} enjoys a
sufficient degree of regular dependence on $\hvq$ to formulate an
optimization problem; and solve it by means of derivative based optimization
methods. To this end, we introduce a cost functional with the
target to minimize the Frobenius norm between $\eff(\hvchi,\hvq)$ and a
given target permittivity tensor. A particular difficulty that has to be
addressed is the necessity to maintain a lower bound, $\hat{J} \ge \delta >
0$, on the transformation determinant. We thus set:
\begin{definition}
  For a given target tensor $\eps^{\text{target}}$, introduce a cost
  function
  \begin{gather}
    \label{eq:cost_functional}
    C(\hvchi;\hvq) \;:=\; \frac12\,\big\|\eff(\hvchi,\hvq)  -
    \eps^{\text{tgt.}}\big\|_{\text{Fr.}}^2 \;+
    \frac\alpha2\, \|\nabla\hvq\|^2_{w} +\;
    \beta\int_{\hat Y}P(\hvy;\hvq)\dhy,
    \\
    \|\nabla\hvq\|^2_{w}\;:=\;
    \int_Y\,w(\vec{\hat y})|\nabla\hvq|^2\dhy,
    \quad
    P(\hvy;\hvq)\;:=\;
    \begin{cases}
      \frac12\,\frac{(\hat J(\hvy) -1)^2}{|\hat J(\hvy)| +
      \hat J(\hvy)}&
      \text{if } \hat J(\hvy) < 1,
      \\[0.6em]
      \frac12\,{(\hat J(\hvy)-1)^2} &
      \text{if } \hat J(\hvy) \ge 1.
    \end{cases}
    \notag
  \end{gather}
  Here, $\alpha>0$ is an appropriately chosen Tikhonov regularization
  parameter, and the coefficients $\beta>0$ control a penalty on the
  deviation of transformation determinant from $\hat J = 1$.
  Moreover, $w(\vec{\hat y})>0$ is a weight function that will be
  chosen later.
\end{definition}
The penalty term $P(\hvy;\hvq)$ is chosen to  provide a barrier that
enforces positivity of the transformation gradient $\hat{J}$ and penalizes
a deviation away from 1. Strictly speaking, this penalty only enforces
positivity of the transformation gradient, $\hat{J} > 0$, but not a
uniform lower bound. This is not a problem for the discretized setting
of our numerical computations (see
Section~\ref{sec:numerical_examples}) because the finite dimensionality
will ensure that $\hat{J}$ remains bounded away from 0, though with a possibly discretization
dependent constant $\delta_h$. Nevertheless, if a guaranteed lower bound $\delta$ is
desired then $P(\hvy;\hvq)$ can be easily modified to accommodate this by
substituting $\hat{J}$ by $\hat{J}-\delta$.

We now seek solutions $(\hvchi,\hvq)\in X:=\vec H(\hat Y, \hat \Sigma)\times
\vec D(\hat Y, \hat\Sigma)$ of the optimization problem
\begin{align}
  \label{eq:optimization_problem}
  \min_{(\hvchi, \hvq)\in X} C(\hvchi;\hvq) \quad\text{subject
  to}\quad E(\hvchi,\hvvarphi;\hvq)\,=\,0\;\;\text{for all}\;\;
  \hvvarphi\in\vec H(\hat Y, \hat \Sigma).
\end{align}

%%%%%%%%%%%%%%%%%%%%%%%%%%%%%%%%%%%%%%%%%%%%%%%%%%%%%%%%%%%%%%%%%%%%%%%%%%%%%%%%
\subsection{Adjoint formulation}
\label{subsec:adjoint_approach}
In order to derive the optimality condition
\eqref{eq:optimization_problem}, we would have to compute a partial
derivative in multiple directions, which is inconvenient. Therefore, we use
an adjoint formulation, see, e.g.,~\cite{BeckerMeidnerVexler:2007,
HinzePinnauUlbrichUlbrich:2009, Troeltzsch:2010}.
\begin{definition}
  Define a Lagrangian
  \begin{gather*}
    \mathcal{L} \;:\; \vec H(\hat Y, \hat\Sigma) \times
    \vec H(\hat Y, \hat\Sigma) \times \vec D(\hat Y, \hat\Sigma) \to \C,
    \\[0.25em]
    \mathcal{L}(\hvchi,\hz;\hvq) = C(\hvchi;\hvq) -  E(\hvchi,\hz;\hvq),
  \end{gather*}
  where we have introduced a Lagrange multiplier $\hz$ for the PDE
  constraint in \eqref{eq:optimization_problem}.
  For a given deformation field $\hvq\in H^1_0(\hat Y,\mathbb{C})^3$ we
  further introduce a \textit{state equation}:
  \begin{gather}
    \label{eq:state_equation}
    \text{find }\hvchi\in \vec H(\hat Y, \hat\Sigma)
    \text{ s.\,t. }
    \mathcal{L}'_{\hz}(\hvchi,\hz;\hvq)[\delta\hz] =
    0\;\;\forall\delta\hz\in\vec H(\hat Y, \hat\Sigma)
  \end{gather}
  and denote the solution of the state equation by $\hvchi(\hvq)$. Here,
  $\mathcal{F}'_{\hz}[\delta\hz]$ denotes the Gâteaux derivative of a
  functional $F$ with respect to $\hz$ in direction $\delta\hz$. We then
  introduce an adjoint equation:
  \begin{gather}
    \label{eq:adjoint_equation}
    \text{find }\hz\in \vec H(\hat Y, \hat\Sigma)\text{ s.\,t. }
    \mathcal{L}'_{\hvchi}(\hvchi(\hvq),\hz;\hvq)[\delta\hvchi] = 0
    \;\;\forall\delta\hvchi\in\vec H(\hat Y, \hat\Sigma).
  \end{gather}
\end{definition}
The central observation is the fact that a solution to
\eqref{eq:optimization_problem} is a critical point of
$\mathcal{L}(\hvchi,\hz;\hvq)$; see \cite{BeckerMeidnerVexler:2007,
HinzePinnauUlbrichUlbrich:2009, Troeltzsch:2010}. For the sake of
completeness we summarize:
\begin{lemma}[First order necessary conditions~\cite{BeckerMeidnerVexler:2007, HinzePinnauUlbrichUlbrich:2009, Troeltzsch:2010}]
  \label{lem:adjoint_formulation}
  The solution $(\hvchi^\ast,\hvq^\ast)$ of \eqref{eq:optimization_problem}
  coincides with a critical point $(\hvchi^\ast,\hz^\ast,\hvq^\ast)$ of the
  Lagrangian $\mathcal{L}(\hvchi,\hz;\hvq)$.
\end{lemma}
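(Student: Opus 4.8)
The plan is to establish the stated first-order necessary condition through the standard reduced-functional / adjoint argument, organized around the three partial stationarity relations of $\mathcal{L}$. I would first observe that, by Theorem~\ref{thm:well_posedness_ale}, for every admissible $\hvq$ the state equation \eqref{eq:state_equation} admits a unique solution $\hvchi(\hvq)$; indeed, since $C$ is independent of $\hz$ and $E$ is linear in its second slot, the condition $\mathcal{L}'_{\hz}[\delta\hz]=0$ reduces to $E(\hvchi,\delta\hz;\hvq)=0$ for all $\delta\hz$, which is exactly the constraint \eqref{eq:cell_problem_ale}. This defines the control-to-state map $\hvq\mapsto\hvchi(\hvq)$ and lets me pass to the reduced cost functional $j(\hvq):=C(\hvchi(\hvq);\hvq)$, so that solving \eqref{eq:optimization_problem} is equivalent to minimizing $j$ over $\vec D(\hat Y, \hat\Sigma)$. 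At a minimizer $\hvq^\ast$ one has $j'(\hvq^\ast)[\delta\hvq]=0$ for all admissible directions, and the content of the lemma is to rewrite this stationarity through $\mathcal{L}$.

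Next I would introduce the adjoint state $\hz^\ast\in\vec H(\hat Y, \hat\Sigma)$ as the solution of the adjoint equation \eqref{eq:adjoint_equation}, i.\,e., $C'_{\hvchi}(\hvchi^\ast;\hvq^\ast)[\delta\hvchi]=E'_{\hvchi}(\hvchi^\ast,\hz^\ast;\hvq^\ast)[\delta\hvchi]$ for all $\delta\hvchi$. Because $E$ is sesquilinear in $(\hvchi,\hz)$ and its dependence on $\hvchi$ is governed by the very form whose well-posedness is guaranteed by Theorem~\ref{thm:well_posedness_ale}, the adjoint equation is uniquely solvable by the same Lax--Milgram-type argument applied to the transposed, conjugated form; hence $\hz^\ast$ exists and is well defined.

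The core computation is then the chain rule for $j$: $j'(\hvq^\ast)[\delta\hvq]=C'_{\hvchi}[\hvchi'(\hvq^\ast)[\delta\hvq]]+C'_{\hvq}[\delta\hvq]$. Differentiating the feasibility identity $E(\hvchi(\hvq),\hz^\ast;\hvq)=0$ in $\hvq$ gives $E'_{\hvchi}[\hvchi'(\hvq^\ast)[\delta\hvq]]+E'_{\hvq}[\delta\hvq]=0$, and substituting the defining adjoint relation to eliminate the inconvenient sensitivity $\hvchi'(\hvq^\ast)[\delta\hvq]$ yields precisely $j'(\hvq^\ast)[\delta\hvq]=C'_{\hvq}[\delta\hvq]-E'_{\hvq}[\delta\hvq]=\mathcal{L}'_{\hvq}(\hvchi^\ast,\hz^\ast;\hvq^\ast)[\delta\hvq]$. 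Since the left-hand side vanishes at the minimizer, the $\hvq$-component of the stationarity system holds. Combined with $\mathcal{L}'_{\hz}[\delta\hz]=0$ (feasibility of $\hvchi^\ast$) and $\mathcal{L}'_{\hvchi}[\delta\hvchi]=0$ (the defining adjoint equation), this shows that $(\hvchi^\ast,\hz^\ast,\hvq^\ast)$ is a critical point of $\mathcal{L}$, as claimed.

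The main obstacle is justifying the chain rule, which requires Gâteaux differentiability of the control-to-state map $\hvq\mapsto\hvchi(\hvq)$ and hence of $j$. Here Theorem~\ref{thm:dependence} supplies Lipschitz continuity of this map, and the remark following it upgrades this to Gâteaux differentiability almost everywhere via Rademacher-type reasoning~\cite{BogachevMayerWolf:1996}; consequently the differentiation of $j$ and of the feasibility identity is valid at almost every admissible $\hvq^\ast$. Care must additionally be taken that, since $C$ is a real-valued functional of complex-valued fields, all Gâteaux derivatives are understood in the real sense, splitting real and imaginary parts; with this convention the remaining steps are the routine bookkeeping of the sesquilinear forms already set up in Section~\ref{subsec:adjoint_approach}.
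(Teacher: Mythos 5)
Your proposal is correct and follows essentially the same route as the paper: both reduce to the functional $c(\hvq)=C(\hvchi(\hvq);\hvq)$, invoke stationarity at the minimizer, and apply the chain rule so that the state and adjoint equations annihilate the sensitivity terms, leaving $\mathcal{L}'_{\hvq}=0$; your version merely writes the cancellation through the feasibility identity $E(\hvchi(\hvq),\hz^\ast;\hvq)=0$ rather than expanding $\mathcal{L}$ directly, which is algebraically the same bookkeeping. Your additional care about justifying the chain rule via Theorem~\ref{thm:dependence} and the almost-everywhere Gâteaux differentiability matches the remark the paper places before the lemma for exactly this purpose.
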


\begin{proof}
  Let $(\hvchi^\ast,\hvq^\ast)$ be a solution to
  \eqref{eq:optimization_problem} and let $\hz^\ast$ be the solution to the
  adjoint equation \eqref{eq:adjoint_equation}. We then have
  $\mathcal{L}'_{\hz}(\hvchi^\ast,\hz^\ast;\hvq^\ast)[\delta\hz]=0$ and
  $\mathcal{L}'_{\hvchi}(\hvchi^\ast,\hz^\ast;\hvq^\ast)[\delta\hvchi]=0$
  by virtue of \eqref{eq:state_equation} and \eqref{eq:adjoint_equation}.

  For an arbitrary deformation $\hvq\in\vec D(\hat Y,\hat\Sigma)$ let
  $\hvchi(\hvq)$ denote the unique solution to the state equation
  \eqref{eq:state_equation}, as well as $\hz(\hvq)$ denote the unique
  solution to the adjoint equation \eqref{eq:adjoint_equation}. We now
  introduce the functional $c(\hvq):=C(\hvchi(\hvq);\hvq)=
  \mathcal{L}(\hvchi(\hvq),\hz(\hvq);\hvq)$ and make the observation that
  $\hvq$, by virtue of being an optimum, is necessarily a critical point of
  $c(\hvq)$, i.\,e., $c'_{\hvq}(\hvq)[\delta\hvq] = 0$ for all
  $\delta\hvq\in \vec D(\hat Y,\hat\Sigma)$. Using above identity and the
  chain rule we get for all $\delta\hvq\in \vec D(\hat Y,\hat\Sigma)$:
  \begin{align}
    \label{eq:central_observation}
    0 \;&=\; c'(\hvq)[\delta\hvq]
    \\\notag
    &=\; \mathcal{L}'_{\hvchi}(\hvchi(\hvq),\hz(\hvq);\hvq)
    \big[\hvchi'_{\hvq}(\hvq)[\delta\hvq]\big]
    + \mathcal{L}'_{\hz}(\hvchi(\hvq),\hz(\hvq);\hvq)[\hz'(\hvq)[\delta\hvq]]
    \\\notag
    &\quad\;
    + \mathcal{L}'_{\hvq}(\hvchi(\hvq),\hz(\hvq);\hvq)[\delta \hvq]
    \\\notag
    &=\; \mathcal{L}'_{\hvq}(\hvchi(\hvq),\hz(\hvq);\hvq)[\delta \hvq],
  \end{align}
  where for the last equality we have exploited the fact that the first two
  terms vanish due to $\hvchi(\hvq)$ and $\hz(\hvq)$ solving the state and
  adjoint equations, respectively. $(\hvchi^\ast,\hz^\ast,\hvq^\ast)$ is
  thus a critical point of the Lagrangian $\mathcal{L}(\hvchi,\hz;\hvq)$.
\end{proof}%
The previous lemma is based on the fact the Lagrangian $\mathcal{L}$ can be
used to describe the derivative of
$c(\hvq)$~\cite{BeckerMeidnerVexler:2007, HinzePinnauUlbrichUlbrich:2009,
Troeltzsch:2010}; for all $\delta\hvq\in \vec D(\hat Y,\hat\Sigma)$:
\begin{align*}
  c'(\hvq)[\delta\hvq] \;=\;
  \mathcal{L}'_{\hvq}(\hvchi(\hvq),\hz(\hvq);\hvq)[\delta \hvq].
\end{align*}
We will make use of this result in the numerical algorithm to compute a
gradient direction for the deformation. Noting that $c'(\hvq)$ is an
element of the dual space of $H^1_0(\hat Y,\mathbb{C})^{3}$, we find the
Riesz-representation, or gradient, as follows:
\begin{definition}[Gradient equation]
  Given a deformation field $\hvq$ and corresponding $\hz(\hvq)$,
  $\hvchi(\hvq)$ given by \eqref{eq:state_equation} and
  \eqref{eq:adjoint_equation}, we find $\vec{\delta c}(\hvq)\in \vec D(\hat
  Y,\hat\Sigma)$ by solving the \emph{gradient equation}
  \begin{align}
    \label{eq:gradient_equation}
    \int_{\hat Y}
    \nabla\vec{\delta c}(\hvq)\cdot\nabla\delta\hvq\dhy
    \;=\;
    \mathcal{L}'_{\hvq}(\hvchi(\hvq),\hz(\hvq);\hvq)[\delta \hvq]
    \qquad \forall \delta\hvq \in \vec D(\hat Y, \hat\Sigma).
  \end{align}
\end{definition}

%%%%%%%%%%%%%%%%%%%%%%%%%%%%%%%%%%%%%%%%%%%%%%%%%%%%%%%%%%%%%%%%%%%%%%%%%%%%%%%%
\subsection{Finite element discretization and optimization framework}
\begin{algorithm}[bt]
  Given $\hvq\in \vec D(\hat Y,\hat\Sigma)$
  \vspace{0.25em}
  \begin{itemize}
    \item[a)]
      compute a solution $\hvchi(\hvq) \in \vec H(\hat Y,\hat\Sigma)$ of the
      state equation \eqref{eq:state_equation},
      \begin{equation*}
        \mathcal{L}'_{\hz}(\hvchi,\hz;\hvq)[\delta \hz] =
        - E(\hvchi,\delta \hz;\hvq) = 0\;\;\forall
        \delta\hz\in\vec H(\hat Y,\hat\Sigma);
      \end{equation*}
    \item[b)]
      compute a solution $\hz(\hvq)$ of the adjoint equation
      \eqref{eq:adjoint_equation},
      \begin{equation*}
        \mathcal{L}'_{\hvchi}(\hvchi(\hvq),\hz;\hvq)[\delta
        \hvchi] = 0 \;\;\forall
        \delta\hvchi\in\vec H(\hat Y,\hat\Sigma);
      \end{equation*}
    \item[c)]
      solve the gradient equation \eqref{eq:gradient_equation},
      \begin{equation*}
        \int_{\hat Y}
        \nabla\vec{\delta c}(\hvq)\cdot\nabla\delta\hvq\dhy
        \;=\;
        \mathcal{L}'_{h,\hvq}(\hvchi(\hvq),\hz(\hvq);\hvq)[\delta
        \hvq] \qquad \forall \delta\hvq \in
        \vec D(\hat Y,\hat\Sigma).
      \end{equation*}
  \end{itemize}%
  Return $\vec{\delta c}(\hvq)$.
  \caption{Computing the $H^1$-gradient $\vec{\delta c}(\hvq)$ of the cost
    functional $c(\hvq)$ by means of the adjoint formulation; see
    Lemma~\ref{lem:adjoint_formulation}.}
  \label{alg:compute_gradient}
\end{algorithm}
\begin{algorithm}[tb]
  Given an initial guess
  $\hvq_h^0 \in \vec D_h$ and an initial
  approximate inverse Hessian operator
  $B^0\in \mathcal{L}(\vec D_h,\vec D_h)$ iterate:
  \begin{itemize}
    \item[a)]
      Compute the gradient $\vec{\delta c}_h(\hvq_h^n) \in \vec D_h$
      with Algorithm~\ref{alg:compute_gradient}, and obtain a search
      direction $\vec p^n \in \vec D_h$ by setting
      \begin{align*}
        \vec p^n = - B^n\vec{\delta c}_h(\hvq_h^n).
      \end{align*}
    \item[b)]
      Perform an Armijo backtracking line search with parameters $\beta \in
      (0,1)$ and $\gamma \in (0,1/2)$ to find the maximal $\lambda^n \in
      \{1,\beta, \beta^2,\ldots\}$ satisfying the \emph{Armijo condition}
      $\vec{c}_h(\hvq_h^n+\lambda^n\vec p^n) \le \vec{c}_h(\hvq_h^n)
      +\gamma \lambda^n (\nabla \vec{\delta c}_h(\hvq_h^n),\nabla p^n).$
      Then, update
      \begin{equation*}
        \vec s^n := \lambda^n \vec p^n,\quad
        \hvq_h^{n+1} = \hvq_h^n + \vec s^n.
      \end{equation*}
    \item[c)]
      Update the approximate inverse Hessian matrix using a damped inverse
      BFGS update formula derived in~\cite{HerterWollner:2023} to assert
      positive definiteness of the operator $B^{n+1}$. To this end, compute
      \begin{equation*}
        \vec{y}^n = \vec{\delta c}_h(\hvq_h^{n+1}) -\vec{\delta
        c}_h(\hvq_h^n).
      \end{equation*}
      Define a scaling parameter
      \begin{equation*}
        \theta_n =
        \begin{cases}
          1 & (\nabla \vec{y}^n,\nabla \vec{s}^n)\ge 0.2 (\nabla
          \vec{y}^n,\nabla B^n \vec{y}^n),
          \\[0.25em]
          0.8 \frac{ (\nabla \vec{y}^n,\nabla B^n \vec{y}^n)}
          {(\nabla \vec{y}^n,\nabla B^n\vec{y}^n)-(\nabla \vec{y}^n,\nabla
          \vec{s}^n) } & \text{otherwise},
        \end{cases}
      \end{equation*}
      and with $\widehat{\vec{s}}^n = \theta_n \vec{s}^n + (1-\theta_n) B^n
      \vec{y}^n$ set
      \begin{multline*}
        B^{n+1} \;=\; B^n + \frac{(\widehat{\vec{s}}^n-B^n\vec{y}^n)(\nabla
        \widehat{\vec{s}}^n,\nabla \cdot) + \widehat{\vec{s}}^n (\nabla
        (\widehat{\vec{s}}^n-B^n\vec{y}^n),\nabla \cdot)}{(\nabla
        \vec{y}^n,\nabla \widehat{\vec{s}}^n)}
        \\[0.25em]
        - \frac{(\nabla (\widehat{\vec{s}}^n-B^n\vec{y}^n),\nabla
        \vec{y}^n)}{(\nabla \vec{y}^n,\nabla \widehat{\vec{s}}^n)^2}
        \widehat{\vec{s}}^n (\nabla \widehat{\vec{s}}^n,\nabla \cdot),
      \end{multline*}
      where, of course, the matrix corresponding to the operator
      $B^{n+1}$ is never constructed directly. Instead, the application of
      $B^{n+1}$ to the direction $\vec{\delta c}_h(\hvq_h^n)$
      is computed by storing the vectors $\widehat{\vec{s}}^n$ and
      $B^n\vec{y}^n$.
    \item[d)]
      If the stopping criterion was reached, return $\hvq_h^{n+1}$,
      otherwise continue at (a).
  \end{itemize}
  \caption{The inverse damped BFGS algorithm for finding an approximate solution
    $\hvq_h^\ast$ of the optimization problem
    \eqref{eq:optimization_problem}.}
  \label{alg:bfgs}
\end{algorithm}
For our numerical tests we use the optimization toolkit
\texttt{DOpElib}~\cite{dopelib} which is based on the finite element
library \texttt{deal.II}~\cite{dealIIcanonical,dealII94}. The library
supports a variety of finite element formulations based on quadrilateral
(in 2d) and hexahedrical (in 3d) meshes.

Let $\mathcal{T}_h$ be a partition of $\hat Y$ into shape-regular
(quadrilateral or) hexahedral elements that are \emph{fitted} to the
hypersurface $\hat\Sigma$. This is to say, we make the assumption that
every element that is intersected by $\hat\Sigma$ has a face for which all
four vertices of the face are located on $\hat\Sigma$. We denote by
$\hat\Sigma_h$ the set of all faces for which all vertices of the face are
located on $\hat\Sigma$. By slight abuse of notation we will interpret
$\hat\Sigma_h$ either as a collection of faces or as the polyhedral
hypersurface created by the union of all faces. We denote by
$\big\{\phi_i^h\big\}^{\mathcal{N}}_{i=1}$ the Lagrange basis of
$\mathbb{Q}_1(\mathcal{T}_h)$, the space of piecewise (bilinear) trilinear
finite elements defined on $\mathcal{T}_h$. Note that the tangential
derivative $\htnabla\varphi$ on a face $f\in\hat\Sigma_h$ of a finite
element function $\varphi\in\mathbb{Q}_1(\mathcal{T}_h)$ is single valued.
We can thus introduce a discrete bilinear form corresponding to
\eqref{eq:cell_problem_ale_ng}
\begin{multline}
  \label{eq:discrete_cell_problemm_ale}
  E_h(\hvchi_h,\hvvarphi_h;\hvq_h) :=
  \int_{\hat Y} \hat\eps_h(\vx,\hvy)
  \big(\hat F_h(\hvy)^T + \hat\nabla\hvchi_{h}^{\:T}\big)
  \cdot\hat\nabla{\hvvarphi}_h^{\:T}\dhy
  \\
  - \frac{1}{i\omega} \int_{\hat \Sigma_h} \hat\sigma_h(\vx,\hvy)
  \big(\htangent{(\hat F_h(\hvy)^T)} +
  \htnabla \hvchi_h^{\:T}\big)\cdot\htnabla\hvvarphi_h^{\:T} \dhohy,
\end{multline}
for $\hvchi_h, \hvvarphi_h \in \vec
H_h:=\mathbb{Q}_1(\mathcal{T})^{2\times3}$ and $\hvq_h \in \vec D_h :=
\mathbb{Q}_1(\mathcal{T})^{3}.$
Here, $\hat\eps_h$, $\hat\sigma_h$ and $\hat F_h$ are computed with respect
to the discrete objects $\hvq_h$ and $\hat\Sigma_h$. Similarly, we
introduce a discrete counterpart $\eff_h$ of the effective permittivity
tensor given by \eqref{eq:eff_ale_ng} and we set
\begin{align}
  \label{eq:discrete_cost_functional}
  C_h(\hvchi_h;\hvq_h) := \frac12\,\big\|\eff_h(\hvchi_h;\hvq_h)  -
  \eps^{\text{trgt}}\big\|_{\text{Fr.}}^2
  +\frac\alpha2\, \|\nabla\hvq_h\|^2_{w_h}
  +\beta\,\int_{\hat Y}P(\hvy;\hvq_h)\dhy.
\end{align}
Here, we choose the following weight function in order to penalize
more the deformation gradient on mesh cells at the interface that ensures
that the discrete interface $\Sigma_h$ retains a sufficient degree of
smoothness:
\begin{equation*}
  w_h(\vec{\hat y}) =
  \begin{cases}
    \begin{aligned}
      &1+\alpha_{\Sigma}/\text{diam}{K} &\quad &\text{for } \vec{\hat y} \in K
      \text{ with }\partial K\cap\delta\Sigma_h\not=\emptyset,
      \\
      &1 &\quad &\text{otherwise.}
    \end{aligned}
  \end{cases}
\end{equation*}

By introducing a discrete Lagrangian
\begin{gather*}
  \mathcal{L}_h \;:\; \vec H_h \times \vec H_h \times \vec D_h \to \C,
  \\[0.25em]
  \mathcal{L}(\hvchi_h,\hz_h;\hvq_h) = C_h(\hvchi;\hvq) -
  E_h(\hvchi,\hz;\hvq),
\end{gather*}
we are now in a position to use the adjoint formulation to compute a
discrete gradient~\cite{dopelib}; see Algorithm~\ref{alg:compute_gradient}.
We use an inverse BFGS algorithm with damping as proposed in~\cite{HerterWollner:2023}
to find an approximate minimum of $C_h(\hvq_h)$; see
Algorithm~\ref{alg:bfgs}. The $\lambda^n$ in the above algorithm is the
step size it is chosen by an Armijo backtracking linesearch and changes at
every iteration. As mentioned, $B^n$ is an approximate inverse Hessian
matrix. The update procedure in Algorithm~\ref{alg:bfgs}(c) ensures that
$B^{n+1}$ remains symmetric and positive definite. The initial value $B^0$
is chosen as
\begin{align*}
  B_0 = \frac{-1}{\alpha} \Delta_h^{-1} \colon \vec{D}_h \rightarrow \vec{D}_h
\end{align*}
in order to exactly recover the inverse Hessian of the control cost as
suggested by the local convergence theory outlined
in~\cite{Griewank:1987,KelleySachs:1991}.

%%%%%%%%%%%%%%%%%%%%%%%%%%%%%%%%%%%%%%%%%%%%%%%%%%%%%%%%%%%%%%%%%%%%%%%%%%%%%%%%
%%%%%%%%%%%%%%%%%%%%%%%%%%%%%%%%%%%%%%%%%%%%%%%%%%%%%%%%%%%%%%%%%%%%%%%%%%%%%%%%
%%%%%%%%%%%%%%%%%%%%%%%%%%%%%%%%%%%%%%%%%%%%%%%%%%%%%%%%%%%%%%%%%%%%%%%%%%%%%%%%
%%%%%%%%%%%%%%%%%%%%%%%%%%%%%%%%%%%%%%%%%%%%%%%%%%%%%%%%%%%%%%%%%%%%%%%%%%%%%%%%

\section{Numerical illustrations}
\label{sec:numerical_examples}
In this section, we discuss a number of numerical results to illustrate the
performance of the shape optimization algorithm.
We implemented the algorithm in a small C++ code using the optimization
toolkit \texttt{DOpElib}~\cite{dopelib} which is based on the finite
element library \texttt{deal.II}~\cite{dealIIcanonical,dealII94}. We have
made our source code publicly available on
Zenodo\footnote{\url{https://zenodo.org/records/10459309}}
\cite{bezbaruah2024}.
For the sake of simplicity
we restrict our numerical computations to 2D by assuming translation
invariance in the $z$-direction. Then, the cell problem
\eqref{eq:cell_problem_ale} and the averaging \eqref{eq:eff_ale} reduce to
a 2D problem. Throughout the section we have chosen $\varepsilon=\unit$ and
the surface conductivity to be given by \eqref{eq:drude} with a fixed
frequency of either $\omega=0.3$, or $\omega=0.5$. The reference geometry
consists of an inscribed circle $\hat \Sigma$ at the center of $\hat
Y=[0,1]^2$ with a radius of $r=0.3$; see Figure~\ref{fig:deformation}. We
have chosen a fixed spatial discretization $\mathcal{T}_h$ of $53\,248$
quadrilaterals (fitted to the hypersurface $\hat\Sigma$) which amounts to
$429\,062$ degrees of freedom for the (complex tensor-valued) state problem
and $107\,266$ for the (vector-valued) control problem.

%%%%%%%%%%%%%%%%%%%%%%%%%%%%%%%%%%%%%%%%%%%%%%%%%%%%%%%%%%%%%%%%%%%%%%%%%%%%%%%%
\subsection{Influence of the regularization parameters}
\label{subse:parameter_study}
\begin{table}[t]
    \centering
    \begin{tabular}{lllcccc}
      \toprule
      $\boldsymbol\beta$ & $\boldsymbol\alpha$ & $\boldsymbol\alpha_\Sigma$ & $\eff_{xx}$ & $\eff_{xy}$ &
      deviation & optimality\\[0.25em]
      0.2  &  0.01 & 10  & $0.8244+0.0200\im$ & $0.0444-0.0174\im$ & $4.63\%$ & $0.00503$ \\
      0.2  & 0.001 & 100 & $0.8233+0.0202\im$ & $0.0441-0.0170\im$ & $4.51\%$ & $0.00739$ \\
      0.2  & 0.001 & 10  & $0.8057+0.0133\im$ & $0.0484-0.0090\im$ & $1.70\%$ & $0.00890$ \\[0.2em]
      0.1  &  0.01 & 10  & $0.8239+0.0204\im$ & $0.0440-0.0174\im$ & $4.61\%$ & $0.02082$ \\
      0.1  & 0.001 & 100 & $0.8230+0.0206\im$ & $0.0438-0.0171\im$ & $4.51\%$ & $0.02598$ \\
      0.1  & 0.001 & 10  & $0.8052+0.0140\im$ & $0.0481-0.0090\im$ & $1.72\%$ & $0.01366$ \\[0.2em]
      0.05 &  0.01 & 10  & $0.8235+0.0207\im$ & $0.0438-0.0173\im$ & $4.59\%$ & $0.02360$ \\
      0.05 & 0.001 & 100 & $0.8228+0.0209\im$ & $0.0436-0.0170\im$ & $4.51\%$ & $0.07232$ \\
      0.05 & 0.001 & 10  & $0.8049+0.0145\im$ & $0.0480-0.0090\im$ & $1.73\%$ & $0.00472$ \\
      \bottomrule
    \end{tabular}
    \caption{Diagonal and off-diagonal components of $\eff(\hvchi,\hvq)$,
      relative deviation $\big\|\eff - \eps^{\text{target}}
      \big\|_{\text{Fr.}} /\big\|\eps^{\text{target}}\big\|_{\text{Fr.}}$,
      optimality, \ie, relative norm of the reduced gradient, for
      different values of stabilization parameters $\beta$, $\alpha$ and
      $\alpha_\Sigma$ (rows). Results are shown for a fixed number of 200
      iterations of the BFGS algorithm~\ref{alg:bfgs}. The initial deviation was
      $10.97\%$.}
    \label{tab:error_table}
    \begin{tabular}{lllcccc}
      \toprule
      $\boldsymbol\beta$ & $\boldsymbol\alpha$ & $\boldsymbol\alpha_\Sigma$ & $\eff_{xx}$ & $\eff_{xy}$ &
      deviation & optimality\\[0.25em]
      0.2  & 0.001 & 10  & $0.8057+0.0135\im$ & $0.0483-0.0092\im$ & $1.73\%$ & $0.00667$ \\
      0.1  & 0.001 & 10  & $0.8052+0.0141\im$ & $0.0481-0.0092\im$ & $1.74\%$ & $0.00747$ \\
      0.05 & 0.001 & 10  & $0.8048+0.0145\im$ & $0.0480-0.0091\im$ & $1.75\%$ & $0.00533$ \\
      \bottomrule
    \end{tabular}
    \caption{Subset of the parameter study reported in
      Table~\ref{tab:error_table} but with 5 instead of 6 global refinement
      steps resulting in $13\,312$ quadrilaterals which amounts to
      $108\,040$ degrees of freedom for the (complex tensor-valued) state
      problem and in $27\,010$ for the (vector-valued) control problem.}
    \label{tab:error_table_2}
\end{table}
We first present a parameter study to assess the influence of the
regularization parameters $\alpha$ and $\beta$ found in
\eqref{eq:cost_functional} on the target tensor $\eff$ and the deformation
field $\hat\vq$. The optimization problem without stabilization terms
in \eqref{eq:cost_functional} is highly ill-posed; the main reason being
the fact that the deformation vector $\hat\vq$ has no influence on the
target functional away from the interface $\hat\Sigma$. Thus, a reasonable
amount of penalization is required to (a) ensure consistent mesh regularity
(\ie \, $\hat J$ being reasonably close to~1), and (b) allow the geometry
to deform sufficiently to actually obtain an effective tensor
$\eff(\hvchi,\hvq)$ close to the target $\eps^{\text{target}}$. For the
parameter study we choose a target permittivity tensor of
\begin{align*}
  \eps^{\text{target}} =
  \begin{pmatrix}
    0.8 + 0.008\im &  0.05 \\
    0.05 & 0.8 + 0.008\im \\
  \end{pmatrix},
\end{align*}
which has a moderate initial relative deviation $\big\|\eff -
\eps^{\text{target}} \big\|_{\text{Fr.}} /
\big\|\eps^{\text{target}}\big\|_{\text{Fr.}}$ of around $10.97\%$. We
choose to perform a fixed number of 200 steps of Algorithm~\ref{alg:bfgs}
without an active stopping criterion. Results for varying degrees of
regularization $\alpha=0.01$, $0.001$, $\alpha_\Sigma=10$, $100$, and
$\beta=0.2$, $0.1$, $0.05$ are reported in Table~\ref{tab:error_table}.
We see in the above experiments that the combination of $\alpha$ and
$\alpha_\Sigma$ have the largest influence on the achieved deviation from
the target tensor. Lower values of the stabilization parameters result in
smaller deviations; see Table \ref{tab:error_table}. As a last figure of
merit we also report the achieved \emph{optimality}, \ie, the norm of the
reduced gradient normalized over the initial value: $\|\vec{\delta
c}_h(\hvq_h^n)\|/\|\vec{\delta c}_h(\hvq_h^0)\|$ for the final step
$n=200$. Here, we observe that the highest reduction after 200 steps with
an optimality of around $0.005$ is achieved for the choice $\alpha=0.001$,
$\alpha_\Sigma=10$, $\beta=0.05$. However, if the stabilization parameters
are chosen too small, the mesh can degrade, in particular near the edges of
the interface $\Sigma_h$. Thus, in order to balance both these factors, we
make a conservative choice of $\alpha=0.001$, $\alpha_\Sigma=10$ and
$\beta=0.1$ for all subsequent numerical tests.

As a final test we examine the influence of mesh refinement on the
numerical result and rerun the case of $\alpha=0.001$, $\alpha_\Sigma=10$
and $\beta=0.2$, $0.1$, $0.05$ with a lower resolution of $13\,312$
quadrilaterals resulting in $108\,040$ degrees of freedom for the (complex
tensor-valued) state problem and in $27\,010$ for the (vector-valued)
control problem; see Table~\ref{tab:error_table_2}. The final $\eff$ values
after 200 iterations are very close to the results obtained for 6 global
refinement steps; cf Table~\ref{tab:error_table}. We conclude that the
chosen resolution of $53\,248$ quadrilaterals is appropriate with minimal
influence on the optimization result.

%%%%%%%%%%%%%%%%%%%%%%%%%%%%%%%%%%%%%%%%%%%%%%%%%%%%%%%%%%%%%%%%%%%%%%%%%%%%%%%%
\subsection{Optimizing an epsilon-near-zero material}
\label{subse:enz_example}
We now illustrate the shape optimization procedure for three different
target permittivity tensors given by
\begin{align*}
  \eps^{\text{target}} =
  \begin{pmatrix}
    \ast &  0.05 \\
    0.05 & 0.5 + 0.01\im \\
  \end{pmatrix},
\end{align*}
where we vary the $\eps^{\text{target}}_{xx}$ component from (a)
$0.5+0.01\im$, (b) $0.25+0.005\im$, to (c) $0.0$. The target tensor has been
chosen close to the initial permittivity tensor of
\begin{align*}
  \eff_{\text{ref}} =
  \begin{pmatrix}
     0.50304+0.01114\im & 0.0 \\
     0.0                & 0.50304+0.01114\im \\
  \end{pmatrix},
\end{align*}
obtained for the reference configuration with frequency $\omega=0.3$. As
the target vector gradually gets closer to an epsilon-near-zero material
\cite{mattheakis2016,maier19c} an increasingly larger mesh deformation is
required to achieve an optimal configuration. We chose to add an
off-diagonal value of $0.05$ in the target permittivity tensor to force an
increased interaction between the $x$- and $y$-directions. For our
numerical computation we use the stabilization parameters discussed in
Section~\ref{subse:parameter_study} and a stopping criterion to achieve a
reduction of the reduced gradient, viz. $\|\vec{\delta
c}_h(\hvq_h^n)\|/\|\vec{\delta c}_h(\hvq_h^0)\|$, of better than $10^{-4}$.
\begin{figure}[tb]
  \centering
  \subfloat[$\eps^{\text{trgt.}}_{xx}=0.5+0.01\im$]{
    \includegraphics[width = 0.232\textwidth]{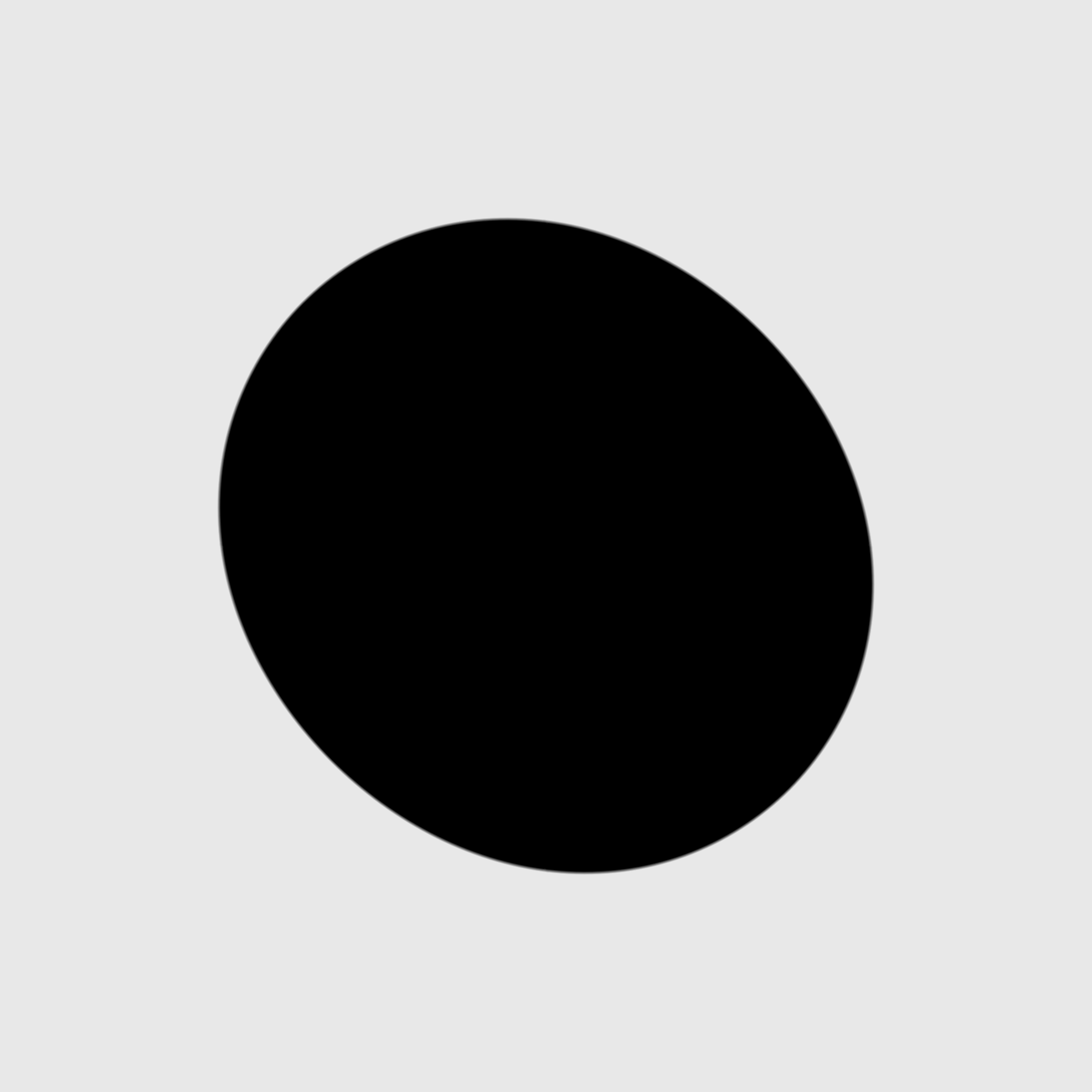}
  }
  \subfloat[$\eps^{\text{trgt.}}_{xx}=0.25+0.00\im$]{
    \includegraphics[width = 0.232\textwidth]{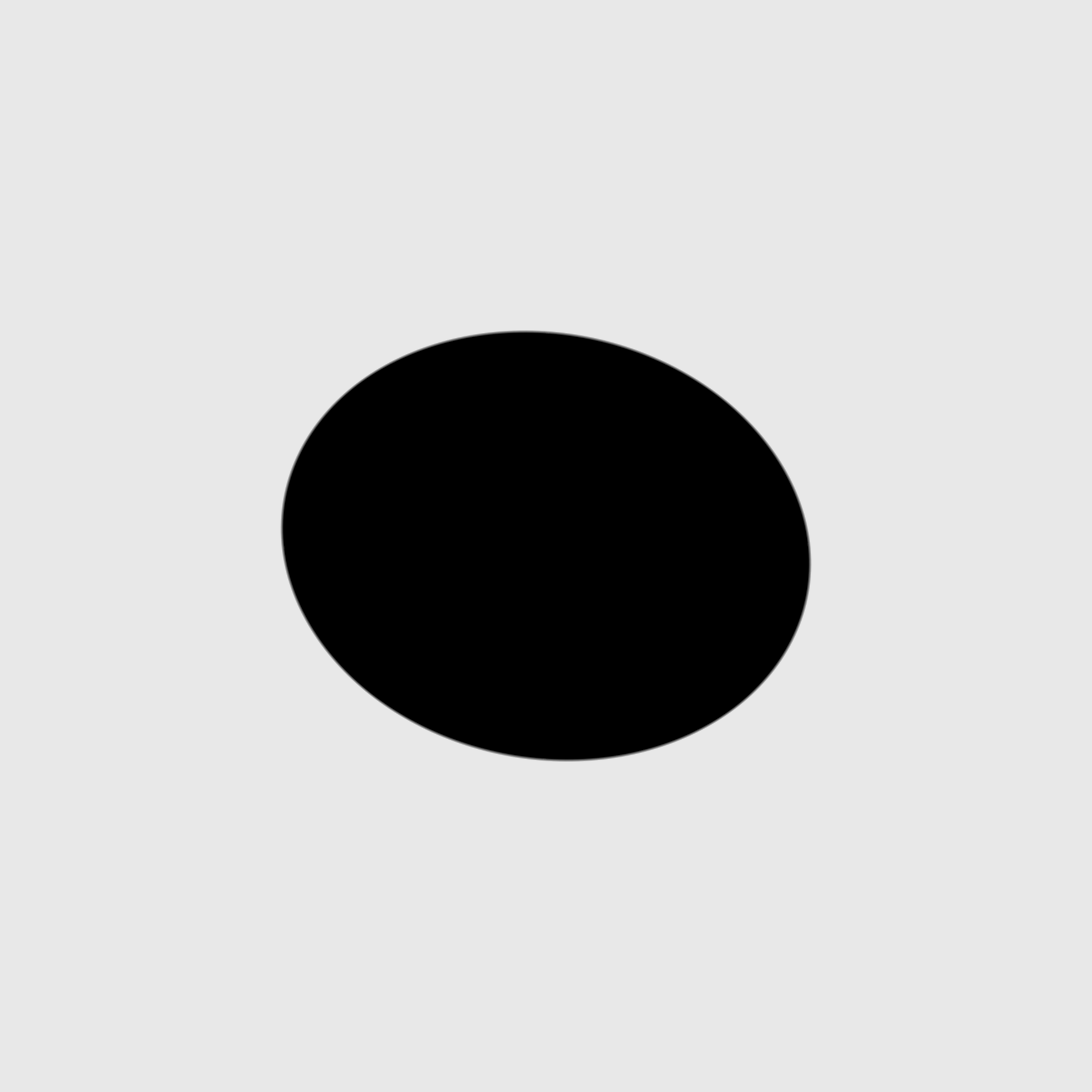}
  }
  \subfloat[$\eps^{\text{trgt.}}_{xx}=0.0$]{
    \includegraphics[width = 0.232\textwidth]{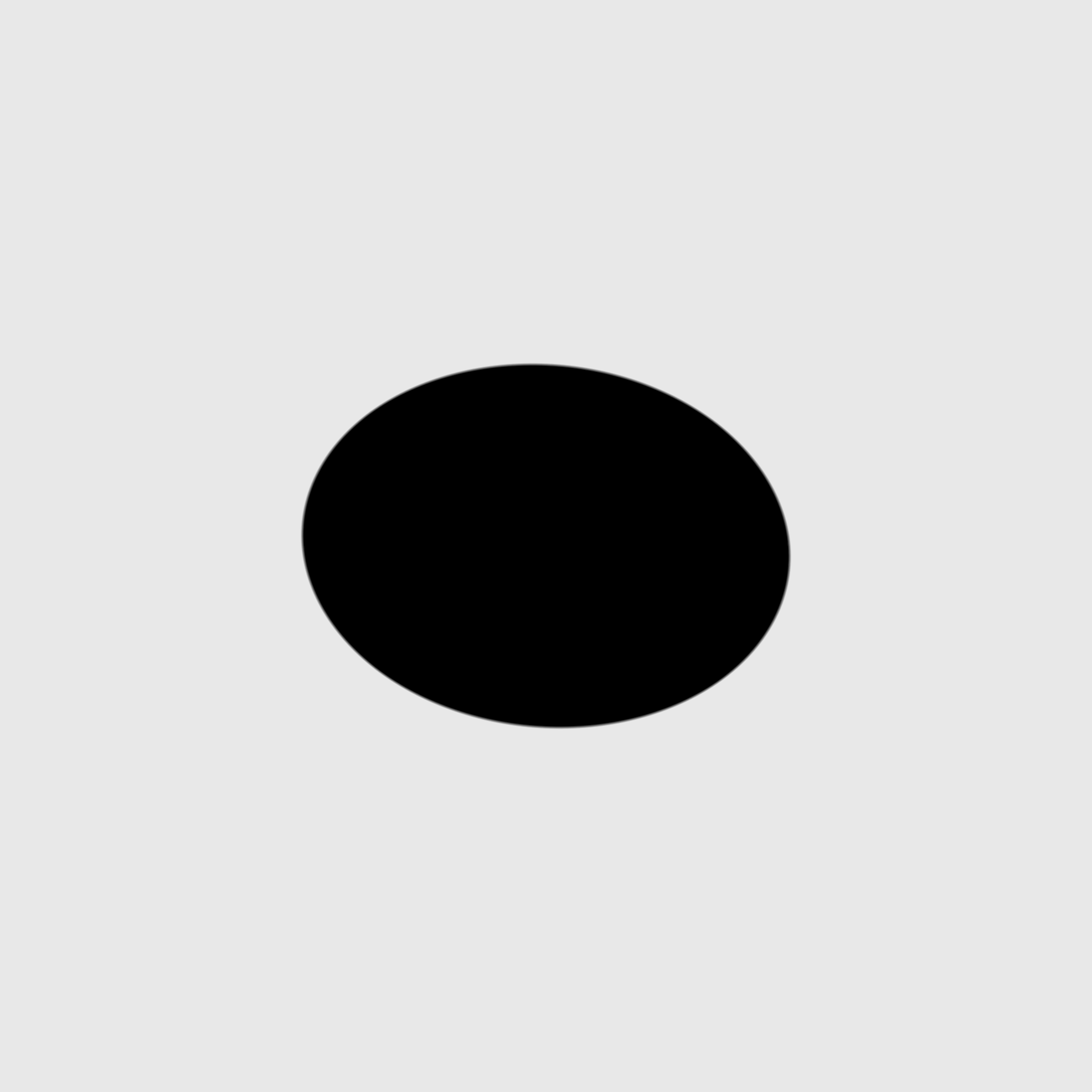}
  }
  \subfloat[reference]{
    \includegraphics[width = 0.232\textwidth]{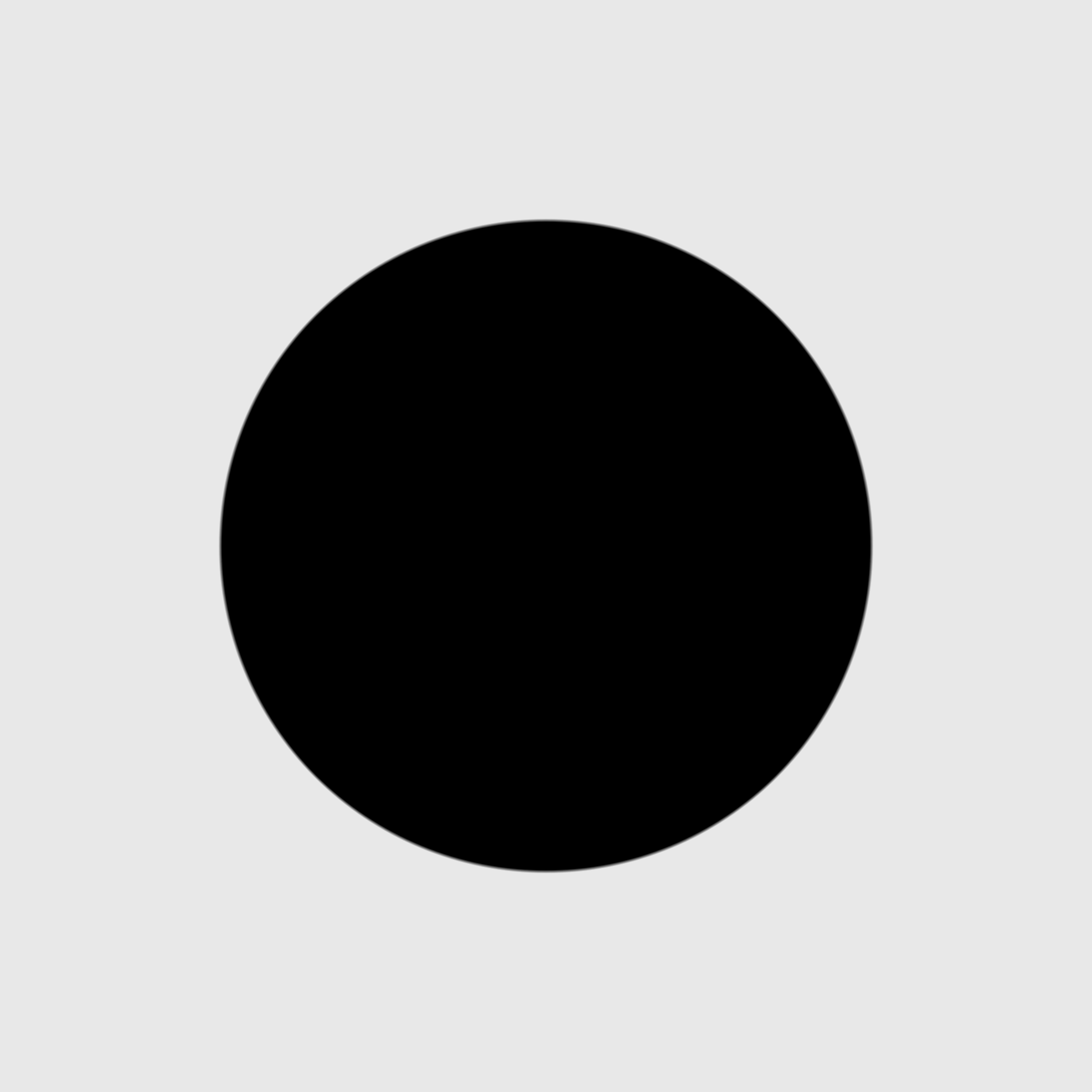}
  }

  \subfloat[$\eps^{\text{trgt.}}_{xx}=0.5+0.01\im$]{
    \includegraphics[width = 0.232\textwidth]{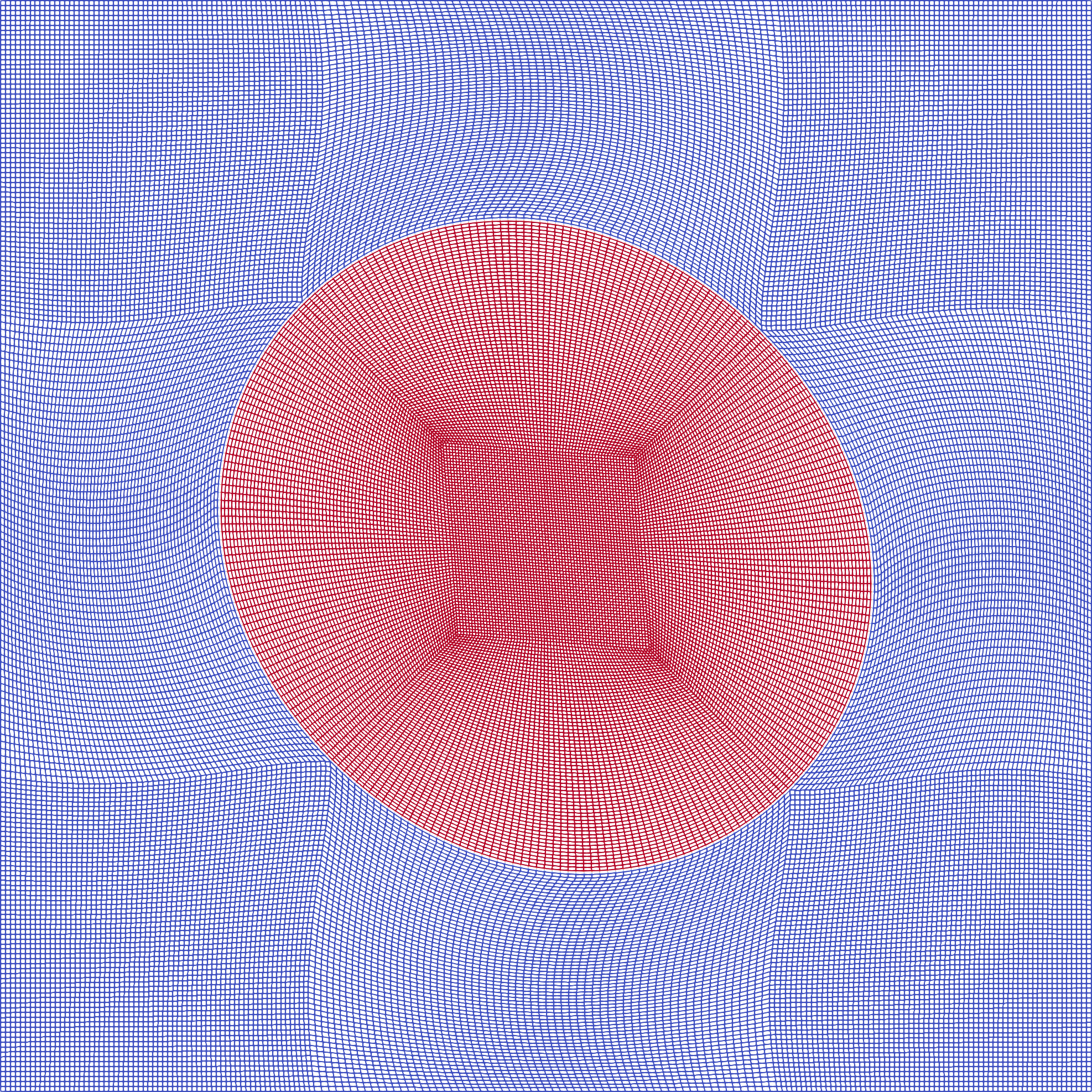}
  }
  \subfloat[$\eps^{\text{trgt.}}_{xx}=0.25+0.00\im$]{
    \includegraphics[width = 0.232\textwidth]{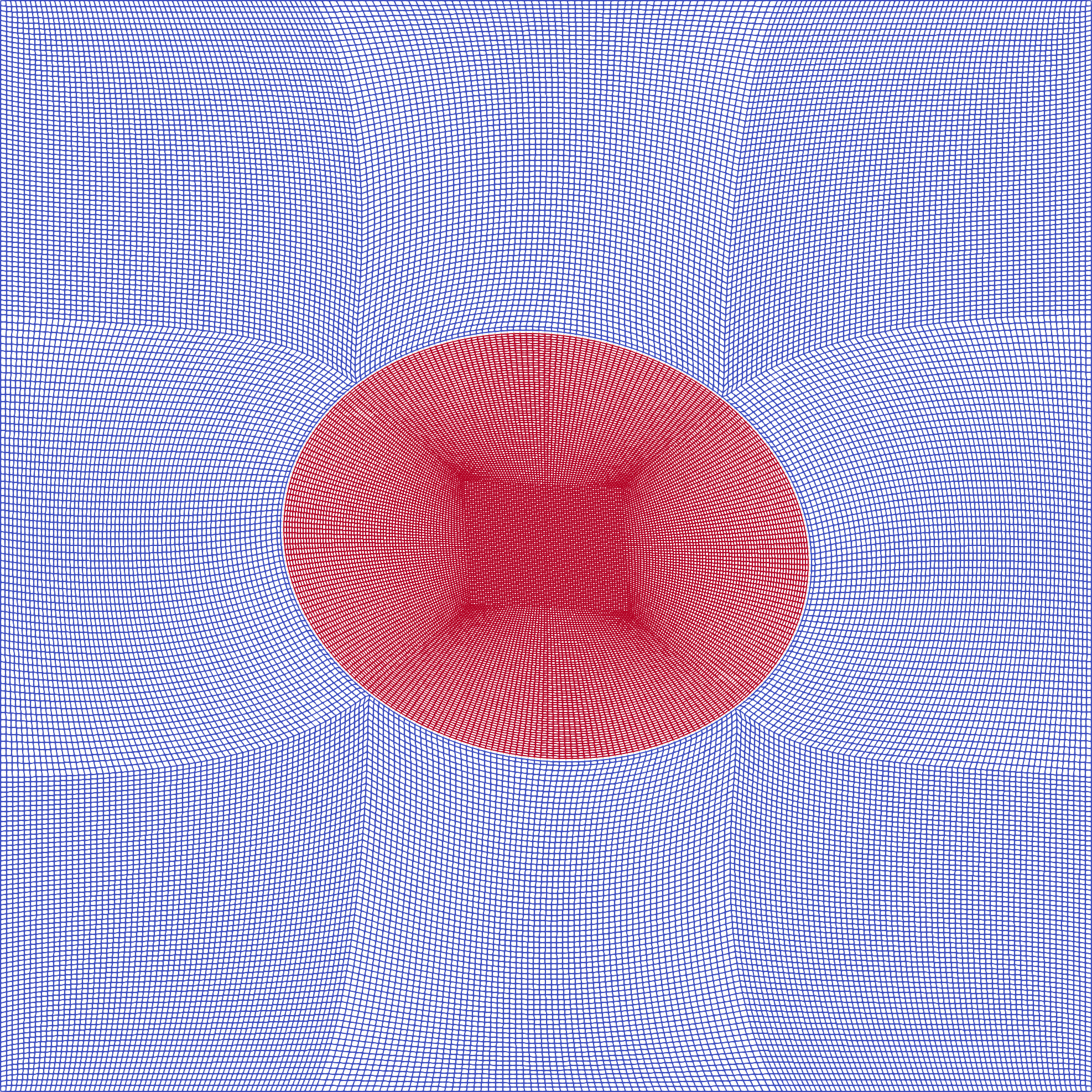}
  }
  \subfloat[$\eps^{\text{trgt.}}_{xx}=0.0$]{
    \includegraphics[width = 0.232\textwidth]{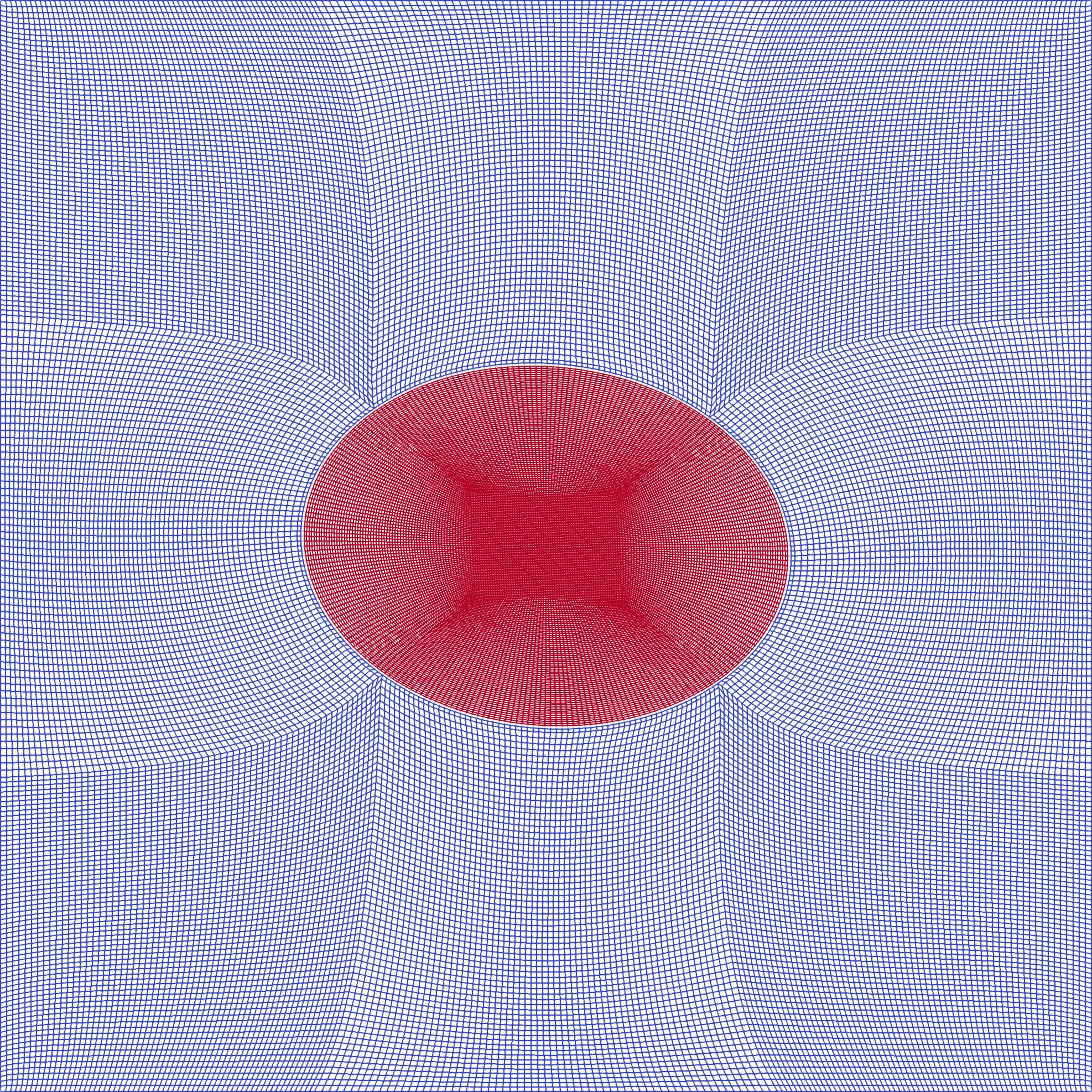}
  }
  \subfloat[reference]{
    \includegraphics[width = 0.232\textwidth]{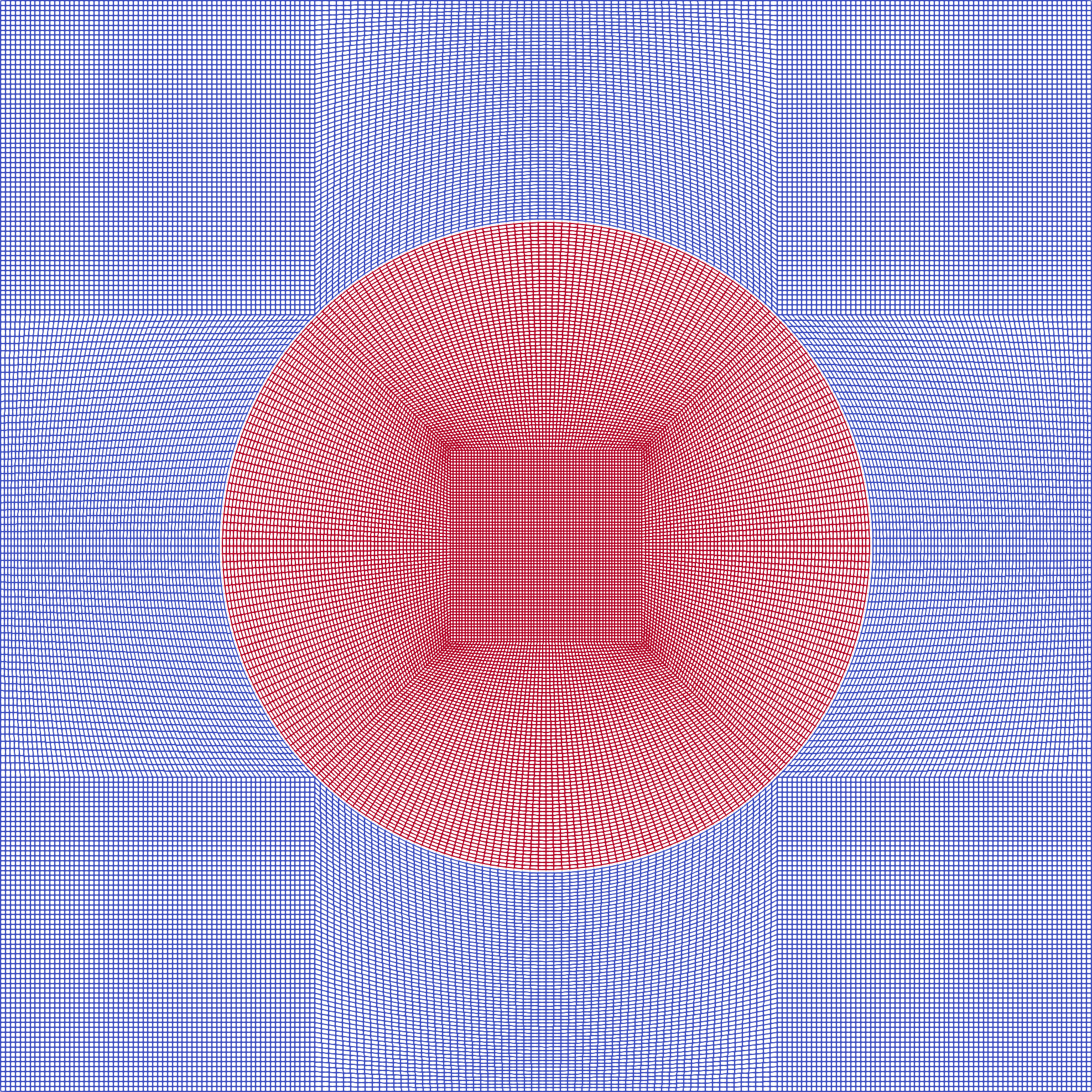}
  }
  \caption{Epsilon-near-zero testcase: Final geometry obtained for target
    cases (a), (b) and (c) with increasingly smaller
    $\eps^{\text{target}}_{xx}$ component. The corresponding deformed (and
    initial) meshes are shown in (e)-(h). The black region in (a)-(d), as
    well as the red region in (e)-(h) show the volume surrounded by the
    interface $\Sigma_h$.}
  \label{fig:epsilon_near_zero}
\end{figure}
\begin{table}[tb]
    \begin{tabular}{rcccccl}
      \toprule
           & $\eff_{xx}$        & $\eff_{xy}$        & $\eff_{yy}$        &
           initial  & final    & steps  \\[0.25em]
      ref. & $0.5030+0.011\im$ & $0.0-0.0\im$      & $0.5030+0.011\im$ \\
      (a)  & $0.5041+0.011\im$ & $0.0488-0.001\im$ & $0.5041+0.011\im$ & $7.09\%$ & $0.65\%$ & $263$  \\
      (b)  & $0.2897+0.027\im$ & $0.0486-0.002\im$ & $0.5285+0.016\im$ & $26.3\%$ & $5.40\%$ & $2047$ \\
      (c)  & $0.0271+0.054\im$ & $0.0493-0.003\im$ & $0.5161+0.021\im$ & $50.8\%$ & $6.36\%$ & $2977$ \\
      \bottomrule
    \end{tabular}
    \caption{Epsilon-near-zero testcase: Final permittivity tensors
      obtained for target cases (a), (b), (c), and the starting value for
      the undeformed reference configuration. In addition, we report the
      initial and final deviation $\big\|\eff - \eps^{\text{target}}
      \big\|_{\text{Fr.}} / \big\|\eps^{\text{target}}\big\|_{\text{Fr.}}$,
      as well as the number of BFGS iterations in algorithm~\ref{alg:bfgs}
      needed to achieve convergence.}
    \label{tab:epsilon_near_zero}
\end{table}
\begin{figure}[tb]
  \centering
  \input{optimality-bfgs.tex}
  \caption{
    Evolution of the optimality, $\|\vec{\delta
    c}_h(\hvq_h^n)\|/\|\vec{\delta c}_h(\hvq_h^0)\|$, during the BFGS
    solution process for the three cases (a) $0.5+0.01\im$, (b)
    $0.25+0.005\im$, to (c) $0.0$. The thick line for each case is a
    smoothed Bezier curve (gnuplot builtin) that is overlayed over the
    actual, oscillatory value.
  }
  \label{fig:epsilon_near_zero_optimality}

  \input{deviation-bfgs.tex}
  \caption{
    Evolution of the deviation, $\big\|\eff - \eps^{\text{target}}
    \big\|_{\text{Fr.}} / \big\|\eps^{\text{target}}\big\|_{\text{Fr.}}$,
    during the BFGS solution process for the three cases (a) $0.5+0.01\im$,
    (b) $0.25+0.005\im$, to (c) $0.0$.
  }
  \label{fig:epsilon_near_zero_deviation}
\end{figure}
The initial and final geometry is illustrated in
Figure~\ref{fig:epsilon_near_zero}. It can be seen that proceeding from
case (a) to (c) an increasingly larger mesh deformation is required, the
shapes remain largely elliptic. The final permittivity tensor values and
deviation are given in Table~\ref{tab:epsilon_near_zero}. With the chosen
stabilization parameters we were able to improve the initial deviation to
our target tensor consistently by an order of magnitude.
\begin{remark}
  We point out that it does not seem possible in general to achieve a
  deviation of zero for arbitrary target permittivities \cite{maier19c}.
  This is largely due to the fact that the effective permittivity tensor
  $\eff$ as a function of shape and frequency possesses a well defined
  structure that does not allow to tune all components and the real and
  imaginary part arbitrarily; see \cite{maier19c}.
\end{remark}
In Figure~\ref{fig:epsilon_near_zero_optimality} we report the evolution of
the optimality, $\|\vec{\delta c}_h(\hvq_h^n)\|/\|\vec{\delta
c}_h(\hvq_h^0)\|$, during the solution process. We note that with the
larger deformation necessary for cases (b) and (c) a significant increase
in the number of required steps to achieve convergence can be observed. As
a final figure of merit we examine the evolution of the deviation,
$\big\|\eff - \eps^{\text{target}} \big\|_{\text{Fr.}} /
\big\|\eps^{\text{target}}\big\|_{\text{Fr.}}$, during the solution
process; see Figure~\ref{fig:epsilon_near_zero_deviation}. We observe that
the deviation converges to its final value very fast (in between 10 to 100
steps) which corresponds to the initial steep slope for the optimality; cf.
Figure~\ref{fig:epsilon_near_zero_optimality}. The remaining steps are then
spend on further minimizing the penalty and thus improving the overall mesh
quality.

%%%%%%%%%%%%%%%%%%%%%%%%%%%%%%%%%%%%%%%%%%%%%%%%%%%%%%%%%%%%%%%%%%%%%%%%%%%%%%%%
\subsection{Large mesh deformations}
\label{subse:large_deformation_example}
\begin{figure}[tb]
  \centering
  \subfloat[$\eps^{\text{trgt.}}_{xy}=0.10$]{
    \includegraphics[width = 0.232\textwidth]{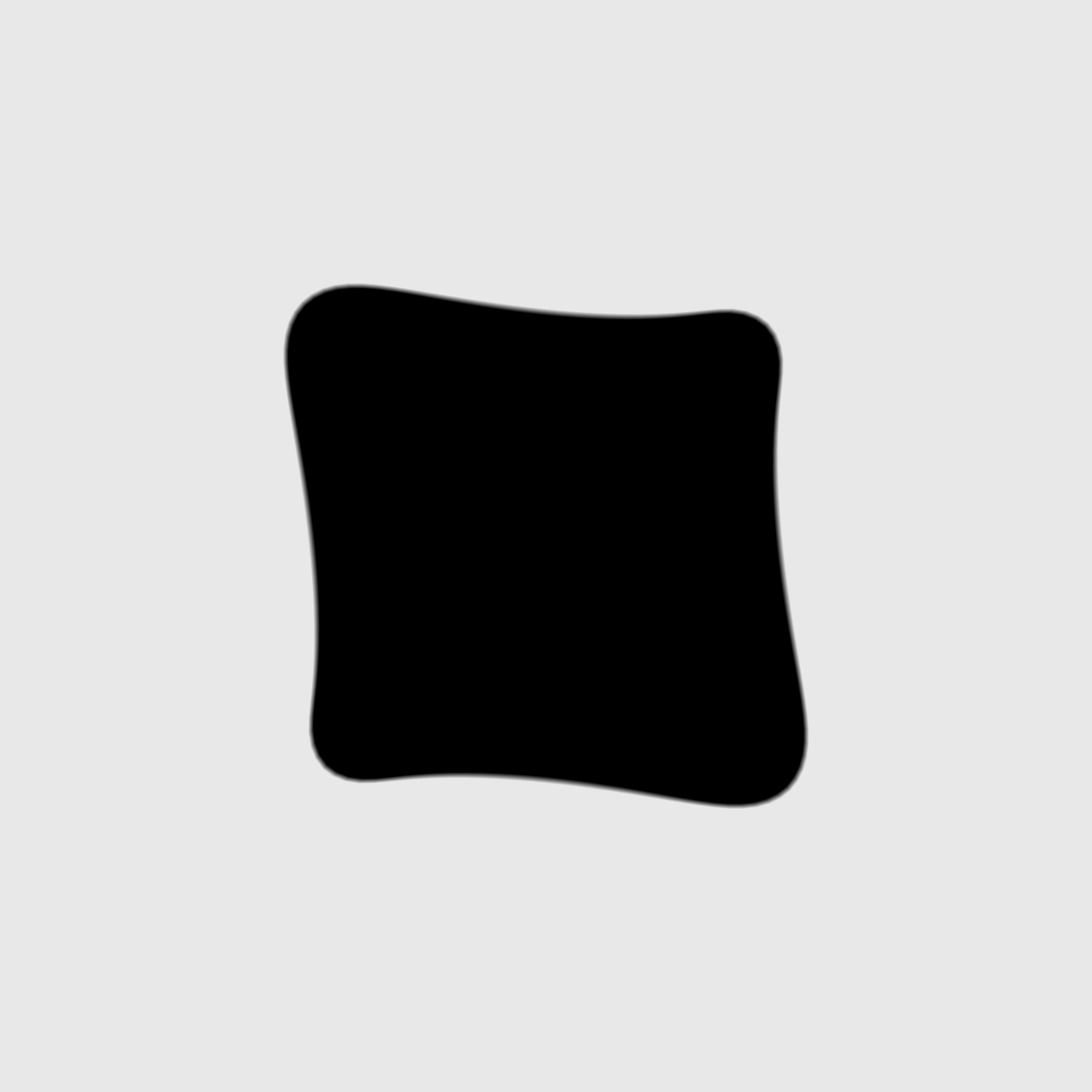}
  }
  \subfloat[$\eps^{\text{trgt.}}_{xy}=0.15$]{
    \includegraphics[width = 0.232\textwidth]{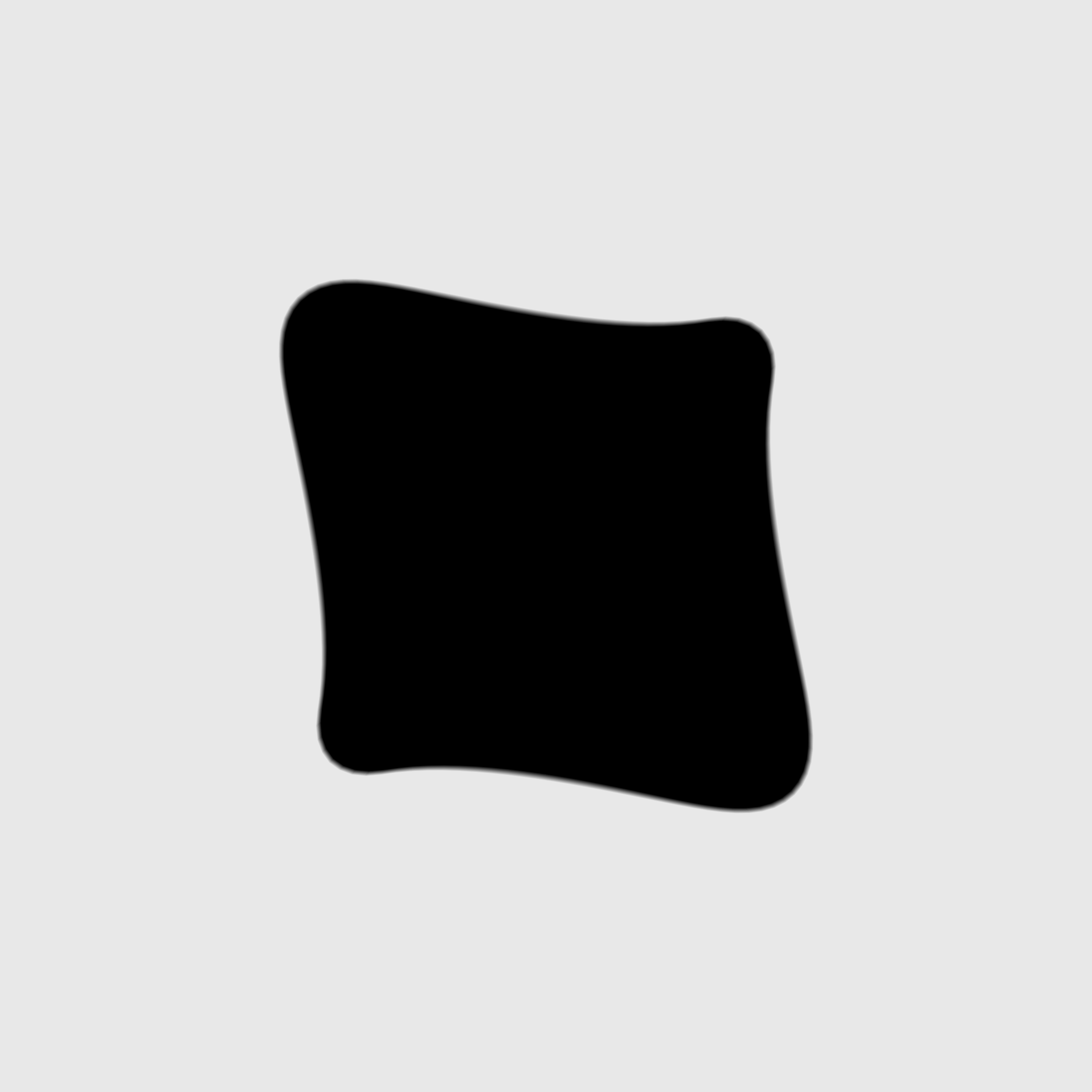}
  }
  \subfloat[$\eps^{\text{trgt.}}_{xy}=0.20$]{
    \includegraphics[width = 0.232\textwidth]{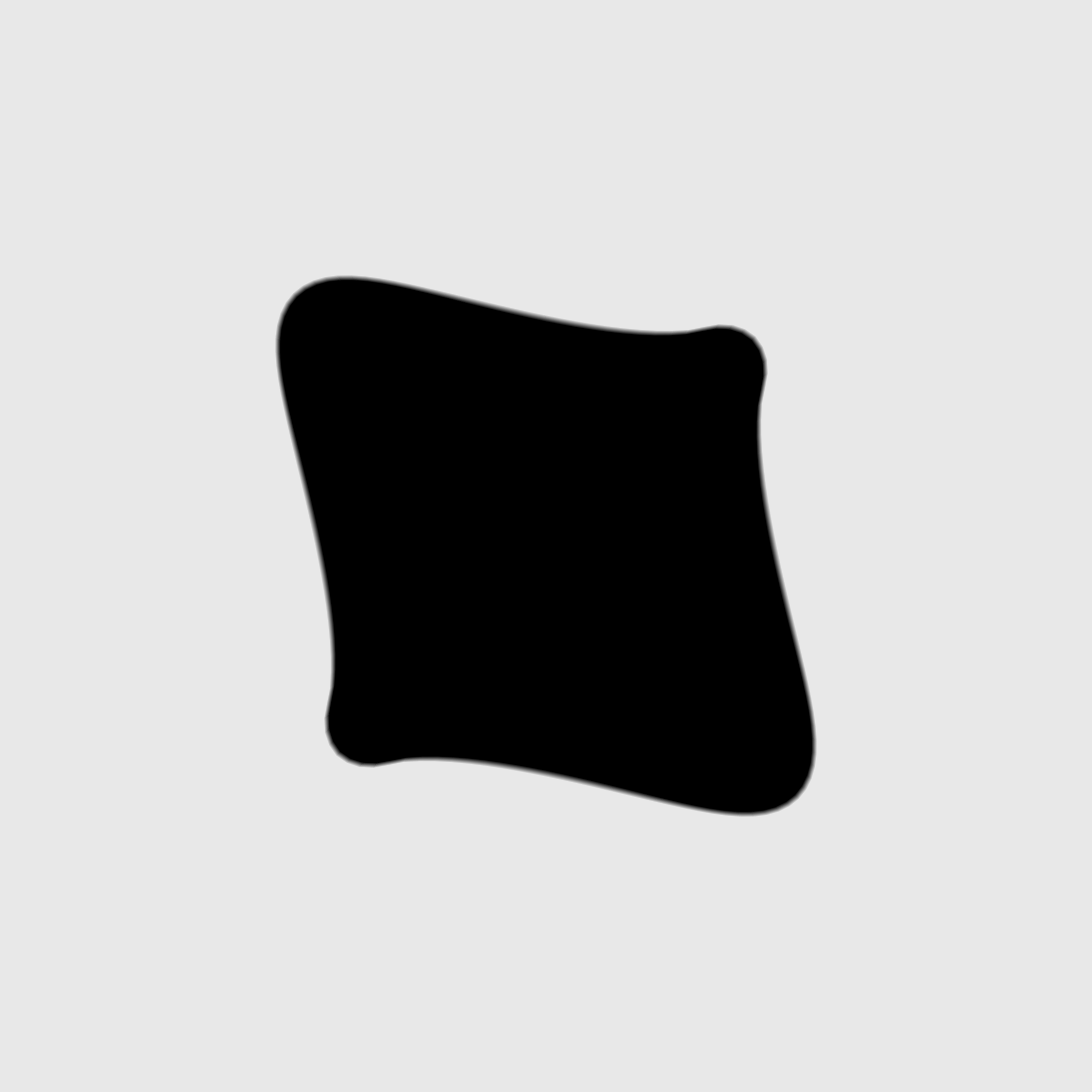}
  }
  \subfloat[reference]{
    \includegraphics[width = 0.232\textwidth]{target-start.png}
  }
  \caption{%
    Large deformation testcase: Final geometry obtained for target cases
    (a), (b) and (c) with increasingly larger $\{\eps^{\text{target}}_{xy},
    \eps^{\text{target}}_{yx}\}$ components.}
  \label{fig:large_deformation}
\end{figure}
\begin{table}[tb]
  \begin{center}
    \begin{tabular}{rccccr}
      \toprule
           & $\eff_{xx}$        & $\eff_{xy}$      & initial  & final    &
           steps  \\[0.2em]
      ref. & $0.7783+0.003\im$ & $0.0-0.0\im$      & \\
      (a)  & $0.5254+0.063\im$ & $0.0953-0.013\im$ & $41.8\%$ & $8.58\%$ & $0+512$ \\
      (b)  & $0.5256+0.063\im$ & $0.1429-0.020\im$ & $44.7\%$ & $8.89\%$ & $100+435$ \\
      (c)  & $0.5258+0.063\im$ & $0.1904-0.027\im$ & $48.4\%$ & $9.30\%$ & $100+709$ \\
      \bottomrule
    \end{tabular}
  \end{center}
  \caption{%
    Large deformation testcase: Final permittivity tensors obtained for
    target cases (a), (b), (c), and the starting value for the undeformed
    reference configuration. In addition, we report the initial and final
    deviation, as well as the number of BFGS iterations in
    Algorithm~\ref{alg:bfgs} needed to achieve convergence.}
  \label{tab:large_deformation}
\end{table}
As a final test, we demonstrate that our optimization algorithm can handle
larger mesh deformations. For this, we use again the target tensor
introduced in Section~\ref{subse:enz_example} but vary the off-diagonal
elements instead of the $xx$-component:
\begin{align*}
  \eps^{\text{target}} =
  \begin{pmatrix}
    0.5 + 0.01 & \ast    \\
    \ast & 0.5 + 0.01\im \\
  \end{pmatrix},
\end{align*}
where we vary the $\eps^{\text{target}}_{xy}$, $\eps^{\text{target}}_{yx}$
components from (a) $0.10$, (b) $0.15$, to (c) $0.20$. In addition, we set
the angular frequency to $\omega=0.4$. For visualization purposes, we also
chose to run this set of computations with a lower resolution of $13\,312$
quadrilaterals (see also the discussion about mesh resolution in
Section~\ref{subse:parameter_study}).
The mesh deformation for cases (b) and (c) is so large that it would
require to increase the values of the stabilization parameters
significantly. This would result in a significantly higher deviation of the
obtained permittivity tensor from the target tensor possibly beyond what
would be deemed acceptable. We thus employ a slightly more sophisticated
strategy: In cases (b) and (c) we first run 100 steps of the BFGS algorithm
with a large penalty of $\beta=0.4$ for (b) and $\beta=0.8$ for (c) which
ensures that the mesh does not degrade too much and the regular solutions
shown in Figure~\ref{fig:large_deformation} are obtained. Afterwards we
fall back to the original stabilization value of $\beta=0.1$ and run the
BFGS algorithm until the stopping criterion is reached.

\begin{figure}[tb]
  \centering
  \subfloat[step 10, dev. $23\%$]{
    \includegraphics[width = 0.232\textwidth]{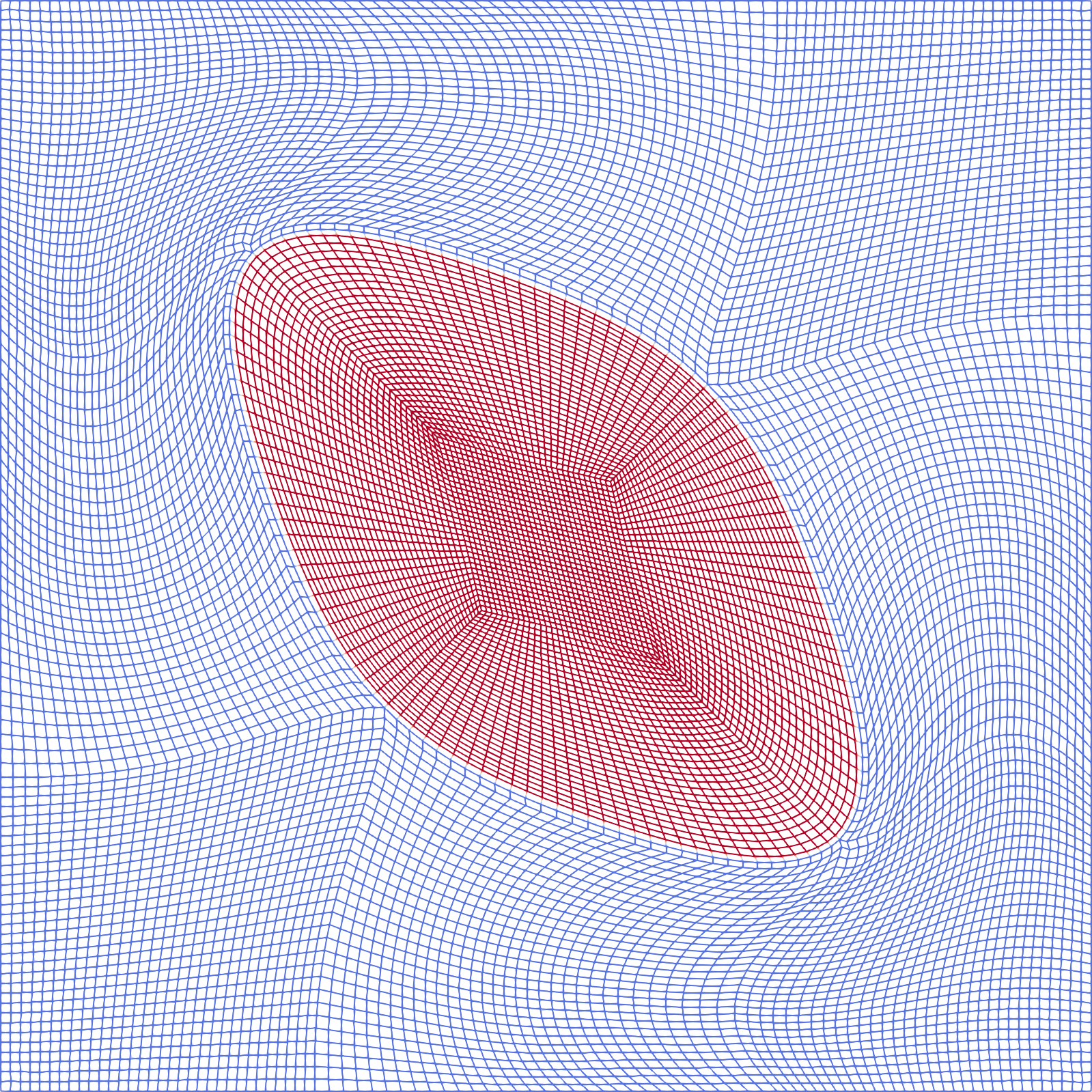}
  }
  \subfloat[step 30, dev. $21$]{
    \includegraphics[width = 0.232\textwidth]{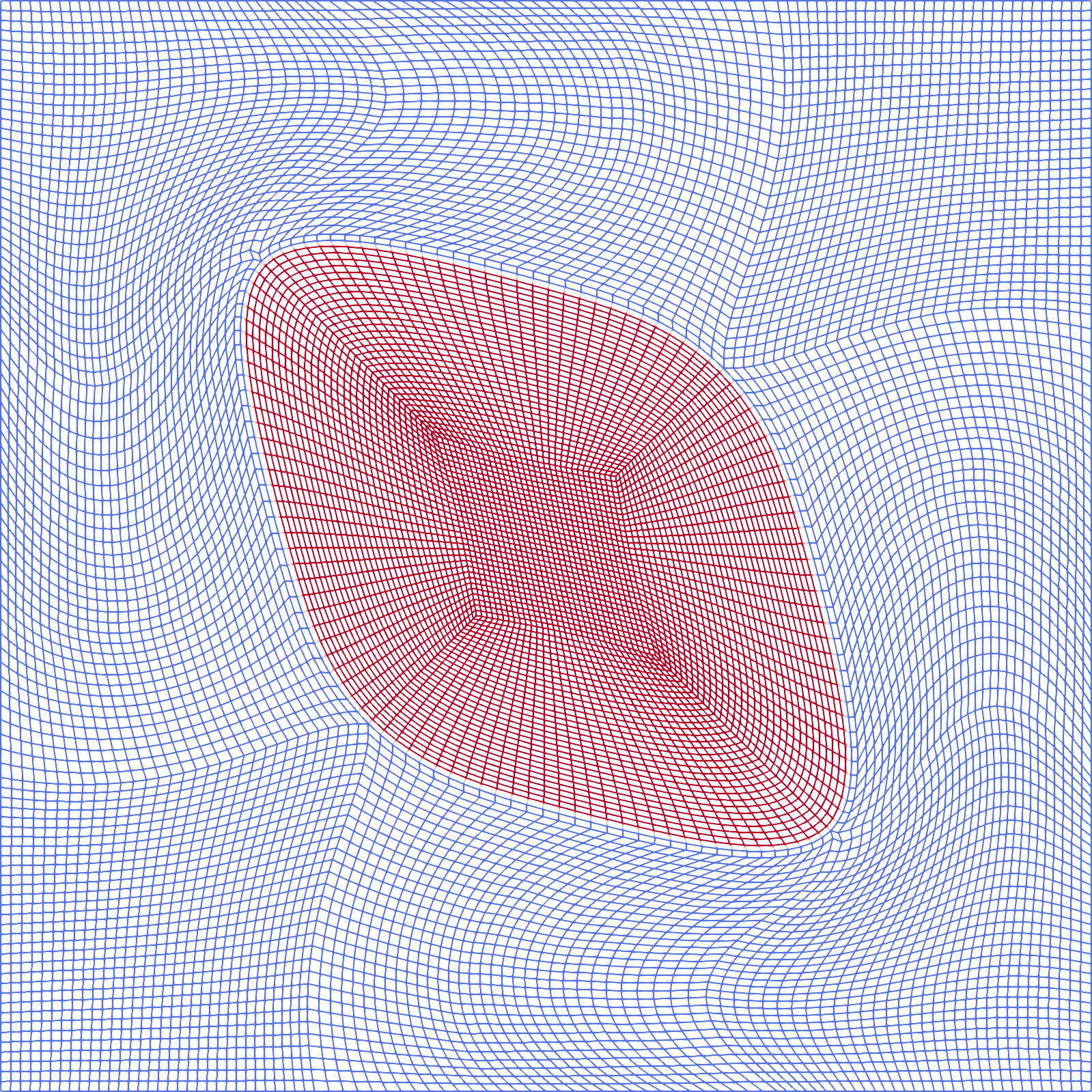}
  }
  \subfloat[step 100, dev. $16\%$]{
    \includegraphics[width = 0.232\textwidth]{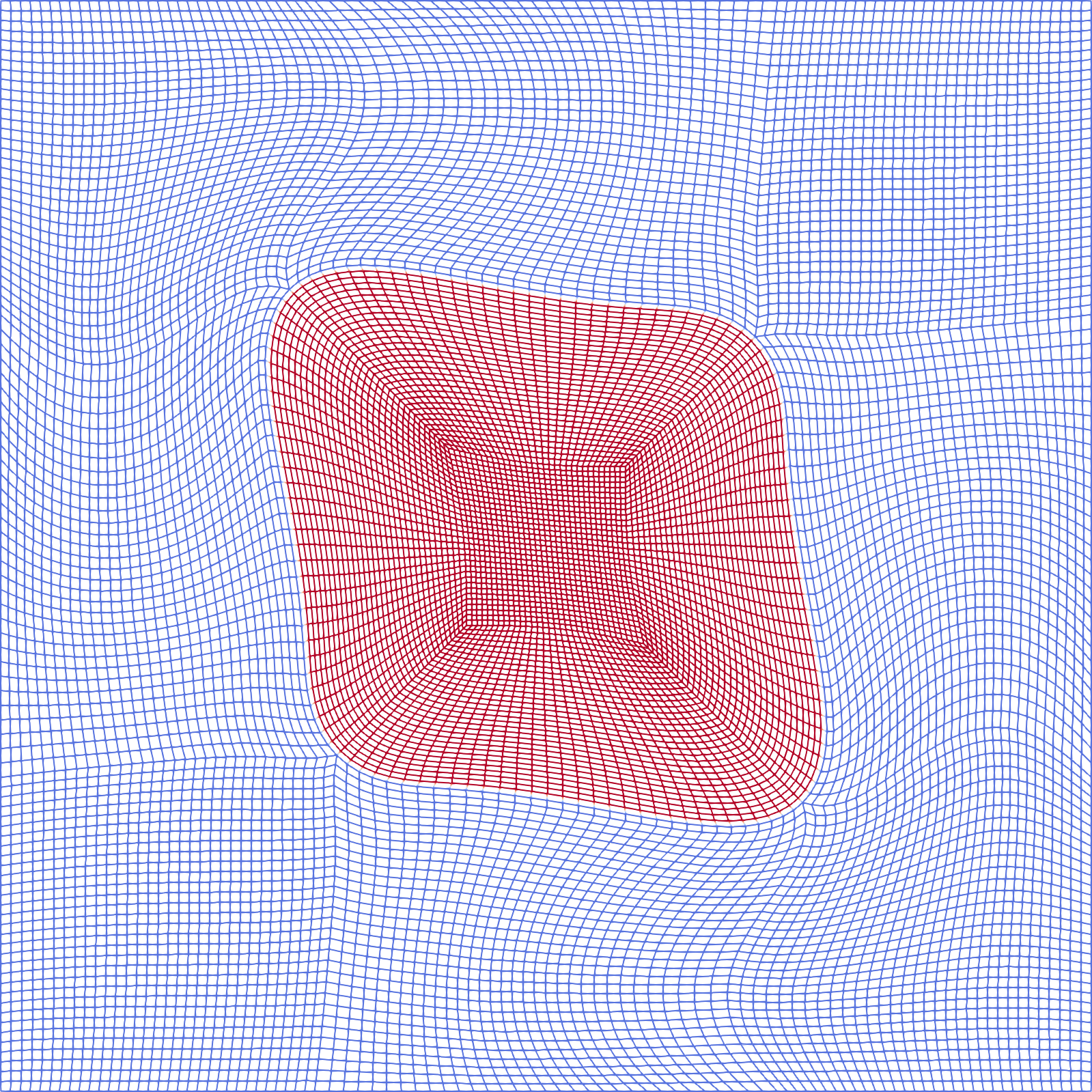}
  }
  \subfloat[st. 100+50, dev. $9\%$]{
    \includegraphics[width = 0.232\textwidth]{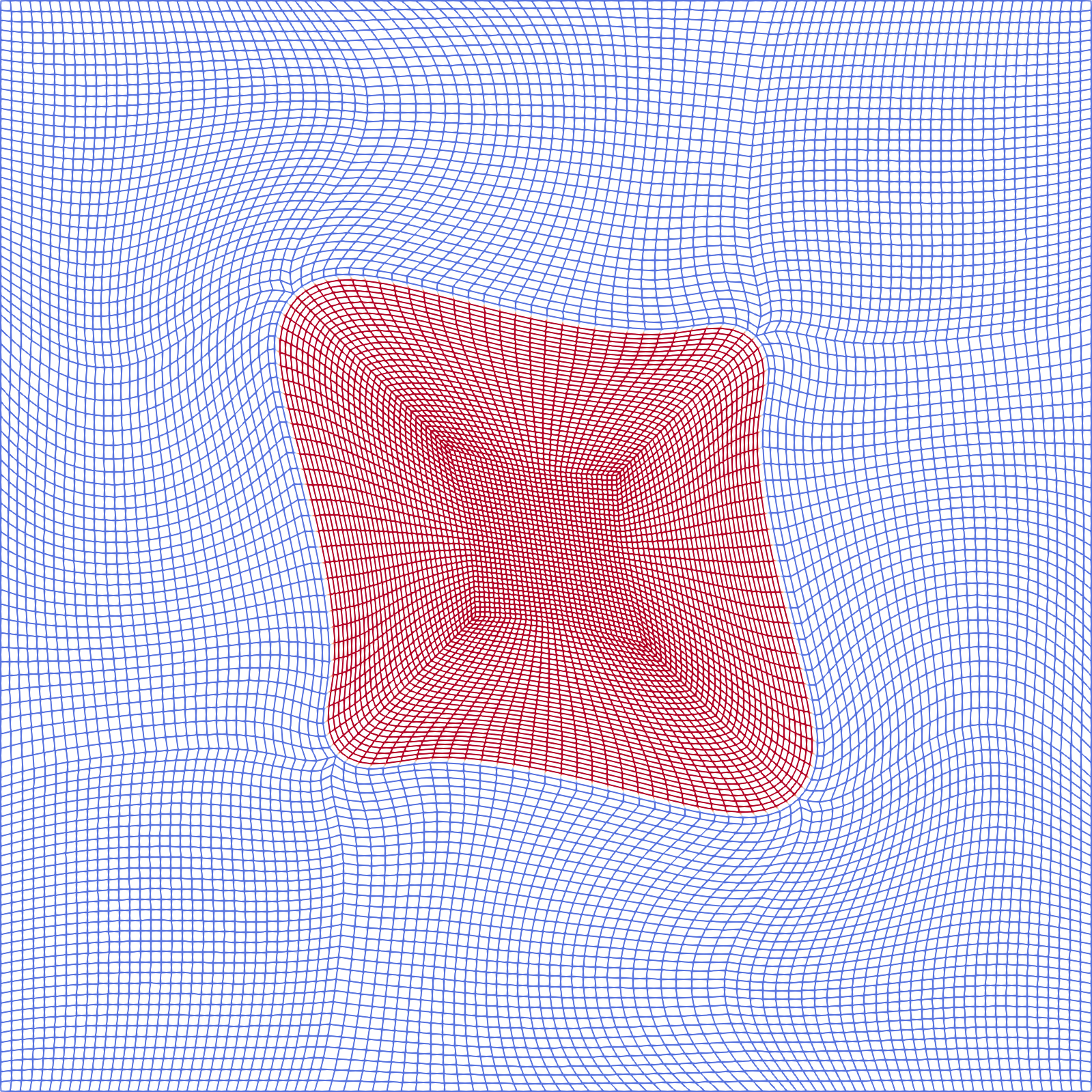}
  }
  \caption{Large deformation testcase: Mesh evolution for Case (c)
    with $\eps^{\text{target}}_{xy}=0.2$ after (a) 10, (b) 30, and (c) 100
    steps with a large regularization parameter $\beta=0.8$, and (d) after
    running another 50 steps with a smaller penalty of $\beta=0.1$. The
    deviation value reported is the normalized difference of the effective
    permittivity tensor to target tensor.}
  \label{fig:mesh_evolution}
\end{figure}
As reported in Figure~\ref{fig:large_deformation} with the modified choice
of target tensor and angular frequency a much more dramatic shape
deformation away from a simple ellipse was achieved. The corresponding
final permittivity tensors, final deviation and number of BFGS steps are
reported in Table~\ref{tab:large_deformation}. We reach a final deviation
of around $9\%$ for all three cases. We report in
Figure~\ref{fig:mesh_evolution} the mesh evolution observed during the
solution process for case (c) with $\eps^{\text{target}}_{xy}=0.2$. The
most crucial part in the solution process is between steps 30 and 100: if
we lower $\beta$ too much below $0.8$ then the algorithm tries to
approximate an eight-figure shape (and becoming singular in the process)
instead of reaching the shape shown in
Figure~\ref{fig:large_deformation}(c).

%%%%%%%%%%%%%%%%%%%%%%%%%%%%%%%%%%%%%%%%%%%%%%%%%%%%%%%%%%%%%%%%%%%%%%%%%%%%%%%%
%%%%%%%%%%%%%%%%%%%%%%%%%%%%%%%%%%%%%%%%%%%%%%%%%%%%%%%%%%%%%%%%%%%%%%%%%%%%%%%%
%%%%%%%%%%%%%%%%%%%%%%%%%%%%%%%%%%%%%%%%%%%%%%%%%%%%%%%%%%%%%%%%%%%%%%%%%%%%%%%%
%%%%%%%%%%%%%%%%%%%%%%%%%%%%%%%%%%%%%%%%%%%%%%%%%%%%%%%%%%%%%%%%%%%%%%%%%%%%%%%%

\section{Conclusion and outlook}
\label{sec:conclusion}
In this paper, a shape optimization problem for plasmonic crystals
consisting of dielectric inclusions was discussed. The objective was to
modify the shape of microscale inclusions so that the effective
permittivity tensor attained a set target. To achieve this goal, a mesh
deformation technique was introduced in the cell problem and the underlying
homogenization process. Furthermore, well-posedness and regularity of the
deformed cell problem were established.

Next, an optimization algorithm based on the
Broyden-Fletcher-Goldfarb-Shanno quasi-Newton method was presented. Through
a series of numerical experiments, the effectiveness of the algorithm was
demonstrated. Importantly, it was shown that the formulation and choice of
penalization effectively handled large mesh deformations, as evidenced in
the experimental results.

\emph{Outlook.}
The current approach is limited to optimizing a microstructure for a single
fixed frequency $\omega$. This seems somewhat limiting as in practice a
target permittivity that is valid over a (large) frequency interval is
desired. In this regard we will explore how the optimization approach can
be adapted to a result reported in \cite{maier19c}: The frequency response
of $\eff_{ij}(\omega)$ is described by a purely algebraic expression
\begin{align*}
  \eff_{ij}(\omega)\;=\;
  \varepsilon\,\delta_{ij}\;-\;
  \eta(\omega)\,N_{ij}
  \;-\;
  \sum_{n=1}^\infty\,
  \frac{\lambda_n\,\eta^2(\omega)}{\varepsilon-\lambda_n\,\eta(\omega)}
  \; M_{jn}
  \; M_{in},
\end{align*}
where $\eta(\omega)=\frac{\sigma(\omega)}{\im\omega}$, the coefficient
$N_{ij}$ is a weight only depending on the geometry, and $M_{in}$, $M_{jn}$
are weights depending on eigenfunctions $\varphi_n$ with corresponding
eigenvalues $\lambda_n$ that are characterized by a purely geometric
eigenvalue problem:
\begin{align*}
  \begin{cases}
    \begin{aligned}
    \Delta \varphi_n(\vx)\big)
      &= 0
      && \text{in }Y\setminus\Sigma~,
      \\[0.5em]
    \js{\varphi_n(\vx))} 
      &= 0
      && \text{on }\Sigma~,
      \\[0.5em]
      \lambda_n\js{\vn\cdot\nabla\varphi_n(\vx)}
      &=
      \nabla_T\cdot\nabla_T\varphi_n(\vx)
      && \text{on }\Sigma~.
    \end{aligned}
  \end{cases}\hspace{-2.75em}
\end{align*}
Many of the ideas developed in this publication can be adapted to this
eigenvalue problem. The stated goal is then to optimize the deviation of
$\eff_{ij}(\omega)$ to a target permittivity over a suitable frequency
band.

%%%%%%%%%%%%%%%%%%%%%%%%%%%%%%%%%%%%%%%%%%%%%%%%%%%%%%%%%%%%%%%%%%%%%%%%%%%%%%%%
%%%%%%%%%%%%%%%%%%%%%%%%%%%%%%%%%%%%%%%%%%%%%%%%%%%%%%%%%%%%%%%%%%%%%%%%%%%%%%%%
%%%%%%%%%%%%%%%%%%%%%%%%%%%%%%%%%%%%%%%%%%%%%%%%%%%%%%%%%%%%%%%%%%%%%%%%%%%%%%%%

\section*{Acknowledgments}
M.B. and M.M. acknowledge partial support from the National Science
Foundation under grants DMS-1912846 and DMS-2045636.

%%%%%%%%%%%%%%%%%%%%%%%%%%%%%%%%%%%%%%%%%%%%%%%%%%%%%%%%%%%%%%%%%%%%%%%%%%%%%%%%
%%%%%%%%%%%%%%%%%%%%%%%%%%%%%%%%%%%%%%%%%%%%%%%%%%%%%%%%%%%%%%%%%%%%%%%%%%%%%%%%
%%%%%%%%%%%%%%%%%%%%%%%%%%%%%%%%%%%%%%%%%%%%%%%%%%%%%%%%%%%%%%%%%%%%%%%%%%%%%%%%

\appendix
\section{Proofs of Lemmas \ref{lem:transformed_tensors_are_nice} and
\ref{lem:equivalence}, and Theorem~\ref{thm:dependence}}
\label{app:estimates}

\begin{proof}[Proof of Lemma \ref{lem:transformed_tensors_are_nice}]
  We know that $\hvq(\hvy)\in \vec D(\hat Y,\hat\Sigma)$ implying
  that $\hat J$ and $F(\hvy)$ are bounded with respect to
  $\|.\|_{L^\infty}$. Moreover, ${\eps}(\vx,\vy(\hvy)),
  {\sigma}(\vx,\vy(\hvy))$ are bounded, complex and
  tensor valued by assumption. Therefore, $\hat\eps(\vx,\hvy)$ and
  $\hat\sigma(\vx,\hvy)$ are by construction also bounded, complex and
  tensor valued. Simultaneous multiplication by a tensor from the left and
  its transpose from the right preserves symmetry, and so does scaling and
  multiplication by $\hat J$. Therefore, $\Re \hat\eps(\vx,\hvy)$,
  $\Im \hat\eps(\vx,\hvy)$, $\Re \hat\sigma(\vx,\hvy)$, and
  $\Im \hat\sigma(\vx,\hvy)$ are symmetric owing to the symmetry
  of $\eps(\vx,\vy)$ and $\sigma(\vx,\vy) $. Similarly, the uniform
  ellipticity of $\Im\hat\eps(\vx,\hvy)$ and $\Re\hat\sigma(\vx,\hvy)$ is a
  direct consequence of the uniform ellipticity of $\Im \eps(\vx,\vy)$ and
  $\Re \sigma(\vx,\vy)$ and the uniform lower bound on the determinant of
  the deformation gradient: $0<\delta<\hat J(\hvy)$.
\end{proof}

\begin{proof}[Proof of Lemma \ref{lem:equivalence}]
  We start with \eqref{eq:cell_problem_ale_bilinear}. Substituting
  $\sigma_{mn}$ given in \eqref{eq:transformed_tensors} and using
  Lemma~\ref{lem:tangential_transformation} for transforming $\vt_m$ and
  $\vt_n$ gives:
  \begin{align*}
    E(\hvchi,\hvvarphi;\hvq) &=
    \int_{\hat Y} \eps(\vx,\vy(\hvy))
    (\hat F(\hvy)^{-T} \hat F(\hvy)^{T}
    + \hat F(\hvy)^{-T}\hat\nabla\hvchi^{\:T})
    \cdot(\hat F(\hvy)^{-T}\overline{\hat\nabla\hvvarphi^{\:T}})\hat J\dhy\\
    &\quad - \frac{1}{i\omega} \sum\limits_{m,n}\int_{\hat \Sigma}
    \sigma_{mn}
    \Big(\frac{\hat F(\hvy)\hvt_m}{\|\hat F(\hvy) \hvt_m\|} \cdot \big(\hat
    F(\hvy)^{-T} \hat F(\hvy)^{T} + \hat F(\hvy)^{-T}
    \hat\nabla\hvchi^{\:T}\big)\Big)
    \\
    &\qquad\qquad\qquad\cdot\Big(
    \frac{\hat F(\hvy)\hvt_n}{\|\hat F(\hvy) \hvt_n\|} \cdot \hat
    F(\hvy)^{-T}\overline{\hat\nabla\hvvarphi^{\:T}}\Big)
    \|\hat F(\hvy)^{-T}\hvn\|_{\ell^2}\hat J \dhohy.
  \end{align*}
  After some more simplifications and using the definitions of $\hat
  \eps(\vx,\hvy)$ and $\hat \sigma_{mn}$ from equation
  \eqref{eq:transformed_tensors}:
  \begin{align*}
    E(\hvchi,\hvvarphi;\hvq) &=
    \int_{\hat Y} \hat\eps(\vx,\hvy)
    (\hat F(\hvy)^{T} + \hat\nabla\hvchi^{\:T})
    \cdot(\overline{\hat\nabla\hvvarphi^{\:T}})\hat J\dhy\\
    &\quad\    - \frac{1}{i\omega} \sum\limits_{m,n}\int_{\hat \Sigma}
    \hat \sigma_{mn}
    \Big( \hvt_m \cdot (\hat F(\hvy)^T+ \hat\nabla\hvchi^{\:T})\Big)
    \\
    &\qquad\qquad\qquad\quad\Big( \hvt_n \cdot \overline{\hat\nabla\hvvarphi^{\:T}}\Big)
    \|\hat F(\hvy)^{-T}\hvn\|_{\ell^2}\hat J \dhohy = 0.
  \end{align*}
  Now using the definition for $\hat\sigma(\vx,\hvy)$ given in
  \eqref{eq:transformed_tensors} yields \eqref{eq:cell_problem_ale_ng}.
  Equation \eqref{eq:eff_ale} can be transformed into \eqref{eq:eff_ale_ng}
  in a similar fashion.
\end{proof}

\begin{proof}[Proof of Theorem~\ref{thm:dependence}.]
  We begin with $E(\hvchi_1 - \hvchi_2, \hvchi_1 - \hvchi_2; \hvq_1)$.
  Setting $\hat F_1(\hvy) := \unit+\hat\nabla\hvq_1(\hvy)$, $\hat F_2(\hvy)
  := \unit+\hat\nabla\hvq_2(\hvy)$ and correspondingly, $\hat{J}_1$,
  $\hat{J}_2$, $\hat\eps_1(\vx, \hvy)$, $\hat\eps_2(\vx,\hvy)$,
  $\hat{\sigma}_1(\vx, \hvy)$, $\hat{\sigma}_2(\vx, \hvy)$, we observe that
  \begin{align*}
      E(\hvchi_1 - \hvchi_2, &\hvchi_1 - \hvchi_2; \hvq_1)
      \;=\; E(\hvchi_2,\hvchi_2-\hvchi_1; \hvq_1) -
      E(\hvchi_2,\hvchi_2-\hvchi_1; \hvq_2)
      \\
      &=  \int_{\hat Y} \big\{\hat\eps_1(\vx,\hvy) - \hat\eps_2(\vx,\hvy)\big\}
      \hat\nabla\hvchi_2^{\:T}\cdot
      \overline{\hat\nabla(\hvchi_2-\hvchi_1)^{\:T}}
      \dhy
      \\
      &\quad- \frac{1}{i\omega} \int_{\hat \Sigma}
      \big\{\hat\sigma_1(\vx,\hvy) - \hat\sigma_2(\vx,\hvy)\big\}
      \overline{\htnabla
      (\hvchi_2-\hvchi_2)^{\:T}\cdot\htnabla\hvchi_2^{\:T}} \dhohy
      \\
      &\quad+ \int_{\hat Y} \big\{\hat\eps_1(\vx,\hvy)\hat F_1(\hvy)^T -
      \hat\eps_2(\vx,\hvy)\hat F_2(\hvy)^T\big\}
      \cdot\overline{\hat\nabla(\hvchi_2-\hvchi_1)^{\:T}}\dhy
      \\
      &\quad- \frac{1}{i\omega} \int_{\hat \Sigma}
      \big\{\hat\sigma_1(\vx,\hvy)\tangent{(\hat F_1(\hvy))^T} -
      \hat\sigma_2(\vx,\hvy)\tangent{(\hat F_2(\hvy))^T}\big\}
      \cdot\overline{\htnabla(\hvchi_2-\hvchi_1)^{\:T}} \dhohy.
  \end{align*}
  Now, proceeding as in the proof of Theorem~\ref{thm:robustness}
  \begin{align*}
      \|\hat\nabla (\hvchi_1 &- \hvchi_2)\|_{L^2(\hat Y)}^2 +
      \frac{1}{\omega}
      \|\htnabla (\hvchi_1 - \hvchi_2)\|_{L^2(\hat \Sigma)}^2
      \\
      &\le
      C \Big\{
      \|\hat\eps_1(\vx,\hvy) - \hat\eps_2(\vx,\hvy)\|_{L^{\infty}(\hat Y)}
      \|\hat\nabla\hvchi_2\|_{L^2(\hat Y)}
      \|\hat\nabla(\hvchi_2-\hvchi_1)\|_{L^2(\hat Y)}
      \\
      &\qquad\quad+
      \frac1\omega\,
      \|\hat\sigma_1(\vx,\hvy) - \hat\sigma_2(\vx,\hvy)\|_{L^{\infty}(\hat\Sigma)}
      \|\htnabla\hvchi_2\|_{L^2(\hat \Sigma)}
      \|\htnabla(\hvchi_2-\hvchi_1)\|_{L^2(\hat \Sigma)}
      \\
      &\qquad\quad+
      \|\hat\eps_1(\vx,\hvy)\hat F_1(\hvy)^T -
      \hat\eps_2(\vx,\hvy)\hat F_2(\hvy)^T\|_{L^2(\hat Y)}
      \|\hat\nabla(\hvchi_2-\hvchi_1)\|_{L^2(\hat Y)}
      \\
      &\qquad\quad+ \frac1\omega\, \|\hat\sigma_1(\vx,\hvy)\hat F_1(\hvy)^T -
      \hat\sigma_2(\vx,\hvy)\hat F_2(\hvy)^T\|_{L^2(\hat \Sigma}
      \|\htnabla(\hvchi_2-\hvchi_1)\|_{L^2(\hat \Sigma)}
      \Big\}
  \end{align*}
  Young's inequality allows to absorb all factors with differences
  $\hvchi_2-\hvchi_1$ into the left hand side, and the factors
  $\|\hat\nabla\hvchi_2\|_{L^2(\hat Y)}$ and
  $\|\htnabla\hvchi_2\|_{L^2(\hat \Sigma)}$ can be bounded by a constant
  $C(\hvq_2)$ only depending on $\hvq_2$ (and the shape $\hat\Sigma$); see
  Theorem~\ref{thm:robustness}. In summary this implies that
  \begin{multline*}
    \|\hat\nabla (\hvchi_1 - \hvchi_2)\|_{L^2(\hat Y)}^2 +
    \frac{1}{\omega}
    \|\htnabla (\hvchi_1 - \hvchi_2)\|_{L^2(\hat \Sigma)}^2
    \\
    \le
    C(\hvq)\,\Big\{
    \|\hat\eps_1(\vx,\hvy) - \hat\eps_2(\vx,\hvy)\|_{L^{\infty}(\hat
    Y)}^2
    +
    \frac1\omega\, \|\hat\sigma_1(\vx,\hvy) -
    \hat\sigma_2(\vx,\hvy)\|_{L^{\infty}(\hat\Sigma)}^2
    \Big\}.
  \end{multline*}
  The final inequality now follows from the fact that for a fixed
  coordinate $\hvy$ the tensors $\varepsilon$ and $\sigma$ depend
  analytically on $\hvq$; see \eqref{eq:transformed_tensors}.
\end{proof}

%%%%%%%%%%%%%%%%%%%%%%%%%%%%%%%%%%%%%%%%%%%%%%%%%%%%%%%%%%%%%%%%%%%%%%%%%%%%%%%%
%%%%%%%%%%%%%%%%%%%%%%%%%%%%%%%%%%%%%%%%%%%%%%%%%%%%%%%%%%%%%%%%%%%%%%%%%%%%%%%%
%%%%%%%%%%%%%%%%%%%%%%%%%%%%%%%%%%%%%%%%%%%%%%%%%%%%%%%%%%%%%%%%%%%%%%%%%%%%%%%%

\section{Proof of Theorem \ref{thm:regularity}}
\label{app:regularity}
In order to show regularity we follow the well-established strategy of
introducing a difference quotient and then passing to the limit; see for
example \cite{gilbarg2001}. For the sake of completeness, we restate and
generalize the argument here so that it can be applied to our case of a
periodic domain with a curved hypersurface. A number of preparatory steps
are in order.
\begin{definition}
  \label{def:diff_quotient}
  Let $\veta(x): \hat Y \to \R^n$ be a smooth, $\hat Y$-periodic vector
  field with $\veta(\hvy)\cdot \vec n(\hvy) = 0$ for $\hvy\in \hat\Sigma$,
  where $\vec n$ denotes a fixed normal field on $\hat\Sigma$, and assume
  that $\veta(\hvy)=\vec 0$ for $\hvy\in\partial\hat\Sigma$.
  \begin{itemize}
    \item[a)]
      For $\hvy\in\hat Y$ let
      $\vec\xi_{\veta,\hvy}\,:\,\mathbb{R}\to\hat Y$ be the integral curve
      of $\veta$ with $\vec\xi_{\veta,\hvy}(0)=\hvy$ and
      $\tfrac{\text{d}}{\text{d}s}\vec\xi_{\veta,\hvy}(s) =
      \veta(\vec\xi_{\veta,\hvy}(s))$ for all $s\in\mathbb{R}$. Here, by
      slight abuse of notation, we equip $\hat Y$ with a toroidal topology
      by identifying opposing periodic boundaries, so that
      $\vec\xi_{\veta,\hvy}$ admits
      $\mathbb{R}$ as domain of definition.
    \item[b)]
      Introduce a transformation, $T_h\,:\,\hat Y\to\hat Y$ characterized
      by $T_h(\hvy)\,:=\,\vec\xi_{\veta,\hvy}(h)$.
    \item[c)]
      For a given $f \in C^\infty_{\text{per}}(\hat Y)$ and $h>0$ introduce
      two difference operators:
      \begin{equation*}
        \hat\nabla_\eta^h f(\hvy) := \frac{f\big(T_h(\hvy)\big)-
        f(\hvy)}{h},
        \qquad
        \tnegdiffop f(\hvy) := \frac{f(\hvy) - f\big(T_{-h}(\hvy)\big)
        \kappa(\hvy)}{h}.
      \end{equation*}
      Here, $\kappa(\hvy) := \text{det}\big(\nabla T_{-h}(\hvy)\big)$.
  \end{itemize}
\end{definition}%%
We have the following results at hand:
\begin{lemma}
  \label{lem:diff_op_lemma}
  Let $\veta$ be a vector field as characterized in
  definition~\ref{def:diff_quotient}. Then, for all $f,g \in
  C^{\infty}_{\text{per}}(\hat Y)$ and under the assumption that $h>0$ is
  sufficiently small it holds that
  \begin{align*}
    \begin{aligned}
      &\text{(1.)} &\quad&
      T_h\text{ and }T_{-h}\text{ are automorphisms on }\hat Y\text{
      and } \hat\Sigma,\text{ with }T_{-h}\circ T_h = \text{Id},
      \\
      &\text{(2.)} &\quad&
      |\nabla\kappa(\hvy)|\le c\,h\,
      \max\big(|\nabla\veta|_\infty(\hvy),|\nabla^2\veta(\hvy)|_\infty\big)
      \text{ for all }\hvy\in\hat Y,
      \\
      &\text{(3.)} &\quad&
      \int_{\hat Y} (\tdiffop f(\hvy)) g(\hvy) \dhy
      = -\int_{\hat Y} f(\hvy) (\tnegdiffop g(\hvy)) \dhy,
      \\
      &\text{(4.)} &\quad&
      \int_{\hat \Sigma} (\tdiffop f(\hvy)) g(\hvy) \dhohy
      = -\int_{\hat \Sigma} f(\hvy) (\tnegdiffop g(\hvy)) \dhohy,
      \\
      &\text{(5.)} &\quad&
      \tdiffop(fg)(\hvy) = f(T_h(\hvy))(\tdiffop g)(\hvy)
      +(\tdiffop f)(\hvy) g(\hvy),
      \\
      &\text{(6.)} &\quad&
      \lim\limits_{h\to0} \tdiffop f(\hvy) = \veta(\hvy)\cdot\hat\nabla f(\hvy)
      =:\hat\nabla_{\eta}f(\hvy).
    \end{aligned}
  \end{align*}
\end{lemma}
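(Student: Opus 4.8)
The plan is to treat the six claims in order of logical dependence, establishing the flow-map structure first and saving the quantitative Jacobian estimate for last. First I would prove~(1.) from standard ODE theory. Since $\veta$ is smooth and $\hat Y$-periodic, identifying opposite faces of $\hat Y$ turns it into a compact manifold without boundary on which $\veta$ is a complete smooth vector field; the standard existence--uniqueness and smooth-dependence theory for ordinary differential equations makes the time-$h$ flow $T_h=\vec\xi_{\veta,\cdot}(h)$ a diffeomorphism for every $h$. The one-parameter group law $\vec\xi_{\veta,\hvy}(s+t)=\vec\xi_{\veta,\vec\xi_{\veta,\hvy}(s)}(t)$ immediately yields $T_{-h}\circ T_h=T_0=\mathrm{Id}$. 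The tangency condition $\veta\cdot\vec n=0$ on $\hat\Sigma$ says that $\hat\Sigma$ is an invariant set of the flow, so $T_{\pm h}$ restrict to automorphisms of $\hat\Sigma$, while $\veta=\vec 0$ on $\partial\hat\Sigma$ keeps the boundary pointwise fixed.

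Next I would dispatch the two elementary identities~(5.) and~(6.). For~(5.) the product rule is purely algebraic: writing $(fg)(T_h(\hvy))-(fg)(\hvy)=f(T_h(\hvy))\bigl(g(T_h(\hvy))-g(\hvy)\bigr)+\bigl(f(T_h(\hvy))-f(\hvy)\bigr)g(\hvy)$ and dividing by $h$ gives the claim. For~(6.) I would use that $s\mapsto\vec\xi_{\veta,\hvy}(s)$ solves the flow ODE with $\tfrac{\mathrm d}{\mathrm ds}\vec\xi_{\veta,\hvy}(0)=\veta(\hvy)$, so by the chain rule $\lim_{h\to0}\tdiffop f(\hvy)=\tfrac{\mathrm d}{\mathrm dh}f\bigl(T_h(\hvy)\bigr)\big|_{h=0}=\veta(\hvy)\cdot\hat\nabla f(\hvy)$, which is exactly the stated directional derivative; smoothness of $f$ and of the flow makes the limit uniform.

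The discrete integration-by-parts formulas~(3.) and~(4.) follow from the change of variables $\vy'=T_h(\hvy)$, whose inverse is $T_{-h}$ by~(1.) and whose volume Jacobian is $|\det\hat\nabla T_{-h}|=\kappa$ (positive for small $h$ since $\kappa\to1$). Substituting into $\int_{\hat Y}f(T_h(\hvy))\,g(\hvy)\dhy$ turns it into $\int_{\hat Y}f(\vy')\,g(T_{-h}(\vy'))\,\kappa(\vy')\,\mathrm d\vy'$; subtracting $\int_{\hat Y}fg\dhy$ and dividing by $h$ reproduces $-\int_{\hat Y}f\,(\tnegdiffop g)\dhy$, with no boundary contributions because the torus is closed. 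For~(4.) the same manipulation is carried out on $\hat\Sigma$, which is admissible precisely because tangency makes $\hat\Sigma$ invariant under $T_{\pm h}$; here $\kappa$ is read as the Jacobian of $T_{-h}$ with respect to the surface measure $\dhohy$, the natural surface analogue of the volume factor, and I would flag the matching of this surface Jacobian with the volume determinant as the one bookkeeping point that needs explicit care.

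I expect~(2.) to be the main obstacle, since it is the only genuinely quantitative estimate. The plan is to exploit that at $h=0$ one has $T_0=\mathrm{Id}$, hence $\hat\nabla T_0=\unit$ and $\kappa\equiv1$, so that $\hat\nabla\kappa$ vanishes identically at $h=0$ and the whole estimate is really a statement that $\hat\nabla\kappa=O(h)$. Concretely I would Taylor-expand the flow as $T_{-h}(\hvy)=\hvy-h\,\veta(\hvy)+O(h^2)$, giving $\hat\nabla T_{-h}=\unit-h\,\hat\nabla\veta+O(h^2)$, and apply Jacobi's formula $\kappa=\det(\hat\nabla T_{-h})=1-h\,\mathrm{tr}(\hat\nabla\veta)+O(h^2)$; differentiating in $\hvy$ then produces $\hat\nabla\kappa=-h\,\hat\nabla\bigl(\mathrm{tr}\,\hat\nabla\veta\bigr)+O(h^2)$, whose leading term is controlled by $|\hat\nabla^2\veta|_\infty$ and the remainder by products of $|\hat\nabla\veta|_\infty$. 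The care required is to make the $O(h^2)$ terms uniform in $\hvy$: this I would obtain by Gr\"onwall estimates applied to the variational equation $\tfrac{\mathrm d}{\mathrm ds}\hat\nabla\vec\xi_{\veta,\hvy}(s)=\hat\nabla\veta(\vec\xi_{\veta,\hvy}(s))\,\hat\nabla\vec\xi_{\veta,\hvy}(s)$ and its once-differentiated counterpart over the compact interval $s\in[-h,0]$, which bound $\hat\nabla T_{-h}$ and $\hat\nabla^2 T_{-h}$ uniformly in terms of $|\hat\nabla\veta|_\infty$ and $|\hat\nabla^2\veta|_\infty$. Collecting these bounds yields the asserted inequality with a constant $c$ independent of $h$ and $\hvy$.
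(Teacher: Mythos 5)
Your proposal is correct and follows essentially the same route as the paper: flow-map/Picard--Lindel\"of theory for (1.), the algebraic splitting for (5.), the chain rule for (6.), change of variables $\hvy\mapsto T_{-h}(\hvy)$ with periodicity for (3.) and (4.), and control of $\nabla\kappa$ through the variational equation and its derivative for (2.) (the paper bounds $\nabla\kappa$ directly via the cofactor structure of the determinant's derivative rather than through a Taylor expansion and Jacobi's formula, but both arguments rest on the same $O(h)$ bound for $\nabla^2_{\hvy}\vec\xi_{\veta,\hvy}(h)$). The one step you flag but leave open --- that the surface Jacobian of $T_{-h}$ on $\hat\Sigma$ coincides with the volume determinant $\kappa$ --- is settled in the paper by expressing $\hat\nabla T_h$ in the local frame $(\vec\tau_1,\vec\tau_2,\vec n)$, where tangency of $\veta$ to $\hat\Sigma$ forces the normal row to reduce to $(0,0,1)$, so the full determinant equals the determinant of the tangential $2\times2$ block.
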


\begin{proof}
  (1.) The integral curve $\vec\xi_{\veta,\hvy}(s)$ is described by an
  initial value problem with smooth right hand side on $\hat Y$. Thus, by
  virtue of the Picard-Lindelöf theorem a unique solution $\vec\xi_{\veta,\hvy}(s)$ exists.
  Moreover, $\vec\xi_{\veta,\hvy}(s)$ depends
  smoothly on the initial data $\hvy$ and the differential
  $\nabla_{\hvy}\vec\xi_{\veta,\hvy}(s)$ is given by an initial value
  problem
  \begin{align}
    \label{eq:ivp}
    \nabla_{\hvy}\vec\xi_{\veta,\hvy}(0)=\unit,
    \qquad
    \frac{\text{d}}{\text{d}s}\nabla_{\hvy}\vec\xi_{\veta,\hvy}(s)
    =
    \nabla\veta(\xi_{\veta,\hvy}(s))
    \nabla_{\hvy}\vec\xi_{\veta,\hvy}(s).
  \end{align}
  This shows that the transformation $T_h(\hvy)=\vec\xi_{\veta,\hvy}(h)$ is
  a well-defined differentiable function. Moreover, owing to the
  compactness of $\hat Y$ there exists an $h_0$ such that $\|\nabla
  T_h(\hvy) - \unit\|\le\delta<1$ for all $\hvy\in\hat Y$ and $|h|\leq h_0$.
  This implies that $T_h$ is injective for $|h|\leq h_0$. A consequence of
  the initial value problem of the integral curves is the fact that
  $\vec\xi_{\veta,\vec{\tilde y}}(-h)=\hvy$ for $\vec{\tilde
  y}=\vec\xi_{\veta,\hvy}(h)$ for all $\hvy\in\hat Y$. This implies that
  $T_{-h}\circ T_{h}=Id$.

  (2.) We first observe that $\nabla\kappa(\hvy)$ is a sum of products
  of $(d-1)$ entries of $\nabla_{\hvy}\vec\xi_{\veta,\hvy}(h)$ and one
  entry of the tensor of second derivatives
  $\nabla^2_{\hvy}\vec\xi_{\veta,\hvy}(h)$. This implies that
  \begin{align*}
    |\nabla\kappa(\hvy)|_{\infty} \le
    c\,|\nabla_{\hvy}\vec\xi_{\veta,\hvy}(h)|_{\infty}^{d-1}\,
    |\nabla^2_{\hvy}\vec\xi_{\veta,\hvy}(h)|_{\infty}
    \le
    c\, |\nabla^2_{\hvy}\vec\xi_{\veta,\hvy}(h)|_{\infty},
  \end{align*}
  where for the second inequality we increased the constant $c$ with a
  uniform bound on $\nabla_{\hvy}\vec\xi_{\veta,\hvy}(h)$ that was
  established for $|h|\le h_0$ in (1.). We now observe that
  $\nabla^2_{\hvy}\hvy_{\veta,\hvy}(0)=\vec 0$. The tensor of second
  derivatives obeys an initial value problem similarly to \eqref{eq:ivp}
  but with a right hand side involving $\nabla\veta(\hvy)$ and
  $\nabla^2\veta(\hvy)$:
  \begin{align*}
    \frac{\text{d}}{\text{d}s}\nabla^2_{\hvy}\vec\xi_{\veta,\hvy}(s)
    =
    (\nabla^2\veta(\xi_{\veta,\hvy}(s))
    \nabla_{\hvy}\vec\xi_{\veta,\hvy}(s)\big)\cdot
    \nabla_{\hvy}\vec\xi_{\veta,\hvy}(s) +
    \nabla\veta(\xi_{\veta,\hvy}(s))
    \nabla^2_{\hvy}\vec\xi_{\veta,\hvy}(s).
  \end{align*}
  Integrating the differential equation and using the initial condition
  we get
  \begin{align*}
    |\nabla^2_{\hvy}\vec\xi_{\veta,\hvy}(h)|_{\infty}
    \,\le\, c\,
    \int_0^h\max\Big(|\nabla\veta|,|\nabla^2\veta|\Big)
    (\vec\xi_{\veta,\hvy}(s))\text{d}s.
  \end{align*}
  possibly shrinking $h_0$ again with a compactness argument now
  establishes
  \begin{align*}
    |\nabla^2_{\hvy}\vec\xi_{\veta,\hvy}(h)|_{\infty}
    \,\le\,
    ch\,\max\big(|\nabla\veta|_\infty(\hvy),|\nabla^2\veta(\hvy)|_\infty\big).
  \end{align*}

  (3.) By definition,
  \begin{align*}
  \int_{\hat Y} (\tdiffop f(\hvy)) g(\hvy) \dhy
  &= \int_{\hat Y} \frac{f(T_h(\hvy))}{h}g(\hvy)\dhy- \int_{\hat
  Y}\frac{f(\hvy)}{h} g(\hvy) \dhy
  \intertext{Applying the transformation $\hvy \mapsto T_{-h}(\hvy)$
    and exploiting $\hat Y$-periodicity gives:}
  &=\int_{\hat Y} \frac{f(\hvy)}{h}g(T_{-h}(\hvy))\kappa(\hvy)
  \dhy- \int_{\hat Y}\frac{f(\hvy)}{h} g(\hvy) \dhy
  \\
  &= -\int_{\hat Y} f(\hvy) (\tnegdiffop g(\hvy)) \dhy.
  \end{align*}

  (4.) The proof of this statement is similar to (1.) with the important
  detail that we have to establish that the transformed surface element is
  given by $\kappa(\hvy)\dhohy$. By definition of $\veta$ we have
  $\veta(\hvy)\cdot \vec n(\hvy) = 0$ for $\hvy \in \hat\Sigma$. This
  implies that $\hat\Sigma$ is \emph{parallel} to integral curves and as a
  consequence we have that
  \begin{align*}
    \hat\nabla T_h(\hvy) \simeq
    \begin{pmatrix}
      \partial_{\tau_1} (T_h\cdot\vec \tau_1) &
      \partial_{\tau_2} (T_h\cdot\vec \tau_1)&
      \partial_{n} (T_h\cdot\vec \tau_1)\\
      \partial_{\tau_1} (T_h\cdot\vec \tau_2) &
      \partial_{\tau_2} (T_h\cdot\vec \tau_2) &
      \partial_{n} (T_h\cdot\vec \tau_2)\\
      0 & 0 & 1
    \end{pmatrix},
  \end{align*}
  when expressing the Jacobian of $T_h(\hvy)$ for $\hvy\in\hat\Sigma$ in a
  local coordinate system spanned by $(\vec n, \vec\tau_1, \vec\tau_2)$.

  (5.) An elementary calculation shows:
  \begin{align*}
    \tdiffop(fg)(\hvy)
    &= \frac{f(T_h(\hvy))g(T_h(\hvy)) -
    f(T_h(\hvy))g(\hvy)}{h}
    \\
    &\qquad\qquad\qquad\qquad\qquad\qquad
    +\frac{f(T_h(\hvy))g(\hvy) - f(\hvy)g(\hvy)}{h}
    \\
    &= f(T_h(\hvy))(\tdiffop g)(\hvy) + g(\hvy) (\tdiffop f)(\hvy).
  \end{align*}

  (6.) This is an immediate consequence of the differentiability of
  $f(\hvy)$ and the L'Hôpital theorem.
\end{proof}
With this definition and lemma, we are in a position to prove the following
intermediate result:
\begin{proposition}
\label{thm:regularity_intermediate}
  Let $\veta(x): \hat Y \to \R^n$ be a smooth, $\hat Y$-periodic vector
  field with $\veta(\hvy)\cdot \vec n(\hvy) = 0$ for $\hvy\in \hat\Sigma$,
  as well as $\veta(\hvy)=\vec 0$ for all $\hvy\in\partial\hat\Sigma$. Let
  $\hvchi \in H$ be the solution to \eqref{eq:cell_problem}. Then, provided
  that $\eps(\vx,\hvy)$, $\sigma(\vx,\hvy)$ and $\hvq(\hvy)$ are
  sufficiently regular:
  \begin{multline}
    \label{eq:estimate}
    \|\hat\nabla \hat\nabla_\eta\hvchi\|^2_{L^2(\hat Y)}
    +\frac1\omega\|\htnabla \hat\nabla_\eta\hvchi\|^2_{L^2(\hat\Sigma)}
    \le\;C\;\max\Big\{1,\|\hat\nabla\veta\|_{L^\infty(\hat Y)},
    \|\hat\nabla^2\veta\|_{L^\infty(\hat Y)}\Big\}
    \\
    \times
    \Big\{
    \|\hat\eps(\vx,\hvy)\|^2_{W^{1,\infty}(\hat Y)}
    \|\hat\nabla\hvchi\|^2_{L^2(\hat Y)}
    + \frac{1}{\omega}
    \|\hat \sigma(\vx,\hvy)\|^2_{W^{1,\infty}(\hat\Sigma)}
    \|\htnabla\hvchi\|^2_{L^2(\hat\Sigma)}
    \\
    +\|\hat\eps(\vx,\hvy)\|^2_{W^{1,\infty}(\hat Y)}
    \|\hat F^T \vec e_i\|^2_{H^1(\hat Y)}
    + \frac{1}{\omega}
    \|\hat \sigma(\vx,\hvy)\|^2_{W^{1,\infty}(\hat\Sigma)}
    \|\hat F^T \vec e_i\|^2_{H^1(\hat\Sigma)}\Big\}.
  \end{multline}
\end{proposition}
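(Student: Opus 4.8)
The plan is to run Nirenberg's difference-quotient method, with the usual straight translations replaced by the flow $T_h$ of $\veta$ from Definition~\ref{def:diff_quotient}; the curved, periodic geometry and the requirement that $\hat\Sigma$ be mapped to itself make this substitution unavoidable. By Lemma~\ref{lem:equivalence} the corrector $\hvchi$ may be regarded as the solution of the transformed cell problem in the form \eqref{eq:cell_problem_ale_ng}, so that the coefficients $\hat\eps$, $\hat\sigma$ and the forcing $\hat F^T$ of \eqref{eq:estimate} are exactly those of the weak equation. All of the calculus required for the flow-based difference operators — the integration-by-parts identities, the product rule, the bound on $\nabla\kappa$, and the limit $\tdiffop f\to\hat\nabla_\eta f$ — is already supplied by Lemma~\ref{lem:diff_op_lemma}.

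First I would test \eqref{eq:cell_problem_ale_ng} with $\hvvarphi=\tnegdiffop\big(\overline{\tdiffop\hvchi}\big)$ (up to the complex conjugation already present in the second slot of $E$) and take imaginary parts, mirroring the proof of Theorem~\ref{thm:robustness}. Applying the discrete integration-by-parts identities (3.) and (4.) of Lemma~\ref{lem:diff_op_lemma} moves the outer difference operator across the integrands as a $\tdiffop$, and the discrete product rule (5.) then splits, in the volume term, $\tdiffop\!\big(\hat\eps(\hat F^T+\hat\nabla\hvchi^T)\big)$ into a leading piece $\hat\eps(T_h\,\cdot)\,\hat\nabla(\tdiffop\hvchi)^T$, the remainders $(\tdiffop\hat\eps)(\hat F^T+\hat\nabla\hvchi^T)$ and $\hat\eps(T_h\,\cdot)\,\tdiffop\hat F^T$, and the commutator between $\tdiffop$ and $\hat\nabla$; the surface term is handled identically with $\htnabla$ and $\hat\sigma$. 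Invoking the uniform ellipticity of $\Im\hat\eps$ and $\Re\hat\sigma$ from Lemma~\ref{lem:transformed_tensors_are_nice} turns the leading piece into the coercive quantity $\|\hat\nabla\tdiffop\hvchi\|_{L^2(\hat Y)}^2+\tfrac1\omega\|\htnabla\tdiffop\hvchi\|_{L^2(\hat\Sigma)}^2$ on the left.

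Next I would bound the remainders uniformly in $h$. The difference quotients $\tdiffop\hat\eps$, $\tdiffop\hat\sigma$ and $\tdiffop\hat F^T$ (and its tangential part) are controlled, through the mean-value form of (6.), by the $W^{1,\infty}$-norms of $\hat\eps,\hat\sigma$ and the $H^1$-norms of $\hat F^T\vec e_i$; this produces the four-term bracket of \eqref{eq:estimate} together with the factors $\|\hat\nabla\hvchi\|_{L^2(\hat Y)}$ and $\|\htnabla\hvchi\|_{L^2(\hat\Sigma)}$. The prefactor $\max\{1,\|\hat\nabla\veta\|_{L^\infty},\|\hat\nabla^2\veta\|_{L^\infty}\}$ is exactly what governs the commutator term and the deviation of the Jacobian $\kappa$ from the identity, both estimated via the bound (2.) on $\nabla\kappa$ and the control of $\nabla_{\hvy}\vec\xi_{\veta,\hvy}$ obtained in (1.)--(2.). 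A Young's inequality then absorbs every occurrence of $\hat\nabla\tdiffop\hvchi$ and $\htnabla\tdiffop\hvchi$ into the left-hand side, giving \eqref{eq:estimate} for $\tdiffop\hvchi$ uniformly in $h$. Since the bound is $h$-independent, $\{\tdiffop\hvchi\}$ is bounded in $\vec H(\hat Y,\hat\Sigma)$, and I would conclude by extracting a weakly convergent subsequence, identifying the limit through (6.) as $\hat\nabla_\eta\hvchi=\veta\cdot\hat\nabla\hvchi$, and passing to the limit by weak lower semicontinuity of the norms.

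The hard part will be the surface term on the curved $\hat\Sigma$: one must show that $\tdiffop$ commutes with the tangential gradient $\htnabla$ up to an error controlled by $\|\hat\nabla\veta\|_{L^\infty}$ and $\|\hat\nabla^2\veta\|_{L^\infty}$, and that the surface integral transforms cleanly under $T_{\pm h}$. Both rely on the structural hypotheses on $\veta$: the condition $\veta\cdot\vec n=0$ on $\hat\Sigma$ ensures that $T_h$ preserves $\hat\Sigma$ with surface measure transforming by $\kappa$ (the content of identity (4.)), while $\veta=\vec 0$ on $\partial\hat\Sigma$ removes any edge contribution upon discrete integration by parts. Making these two points rigorous — rather than the otherwise routine interior estimate — is where I expect the bulk of the technical effort to lie.
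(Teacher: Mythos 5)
Your proposal is correct and follows essentially the same route as the paper's proof in Appendix~B: flow-based difference quotients via $T_h$, discrete integration by parts and product rule from Lemma~\ref{lem:diff_op_lemma}, coercivity from Lemma~\ref{lem:transformed_tensors_are_nice}, explicit commutator estimates controlled by $\|\hat\nabla\veta\|_{L^\infty}$ and $\|\hat\nabla^2\veta\|_{L^\infty}$, absorption by Young's inequality, and passage to the limit $h\to0$; you also correctly identify the surface-measure transformation under $T_{\pm h}$ (guaranteed by $\veta\cdot\vec n=0$ on $\hat\Sigma$) and the $[\htnabla,\tdiffop]$ commutator as the genuinely technical points. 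The only cosmetic discrepancy is that you take imaginary parts while the paper's write-up takes the real part, but your choice is the one consistent with the ellipticity stated in Lemma~\ref{lem:transformed_tensors_are_nice} and with the proof of Theorem~\ref{thm:robustness}.
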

\begin{proof}
  Above assumptions on $\veta(\hvy)$ ensure that $\hvvarphi := \tnegdiffop
  \tdiffop \;\hvchi\,\in H$. Testing \eqref{eq:cell_problem_ale}
  with this choice of test function $\hvvarphi$ and taking the real part:
  \begin{multline}
    \label{eq:intermediate}
    \int_{\hat Y} \Re\hat\eps\,\hat\nabla\hvchi^{\:T}
    \cdot\hat\nabla{\tnegdiffop \tdiffop \overline{\hvchi}^{\:T}}\dhy
    + \frac{1}{\omega} \int_{\hat \Sigma}\Im\hat\sigma\,
    \htnabla \hvchi^{\:T}\cdot\htnabla
    \tnegdiffop \tdiffop \overline{\hvchi}^{\:T} \dhohy
    \\
    = - \Re\Big\{\int_{\hat Y} \hat\eps(\vx,\hvy)
    (\hat F(\hvy)^T) \cdot\hat\nabla
    \tnegdiffop \tdiffop \overline{\hvchi}^{\:T}\dhy \\
    + \frac{1}{i\omega} \int_{\hat \Sigma} \hat\sigma(\vx,\hvy)
    \htangent{(\hat F(\hvy)^T)}\cdot\htnabla
    \tnegdiffop \tdiffop \overline{\hvchi}^{\:T} \dhohy\Big\}.
  \end{multline}
  We now wish to move the difference operator $\tnegdiffop$ from the
  testfunction over to $\hvchi(\hvq)$ and the forcing terms but we are
  faced with the issue that due to the curvature encoded in $\veta(\hvy)$
  the operators $\hat\nabla$, $\htnabla$, $\tnegdiffop$, $\tdiffop$ do not
  necessarily commute. Observe, for example, that
  \begin{align}
    \label{eq:jump_example}
    [\hat\nabla,\tnegdiffop] f(\hvy) \;=\;
    \hat\nabla f\big(T_{-h}(\hvy)\big) \hat\nabla \veta(\hvy)\kappa(\hvy)
    + f\big(T_{-h}(\hvy)\big)\frac{\hat\nabla \kappa(\hvy)}{h},
  \end{align}
  where $[.,.]$ denotes the commutator. Lemma~\ref{lem:diff_op_lemma}(2.)
  establishes that $|\kappa(\hvy)/h|_\infty \le
  c\,\max\big(|\nabla^2\veta(\hvy)|_\infty,
  |\nabla^2\veta(\hvy)|_\infty\big)$. Applying this result to
  \eqref{eq:jump_example} and taking the limit $h\to0$ on the right
  hand side yields
  \begin{align}
    \label{eq:jump_estimate}
    \big\|[\hat\nabla,\tnegdiffop] f(\hvy)\big\|_{L^2(\hat Y)} \,\le\, C\,
    \max\big\{1,\|\hat\nabla\veta\|_{L^\infty(\hat
    Y)},\|\hat\nabla^2\veta\|_{L^\infty(\hat Y)}\big\}\,\big\|\hat\nabla
    f(\hvy)\|_{L^2(\hat Y)},
  \end{align}
  where we have used a Poincaré inequality valid for periodic $f(\hvy)$. A
  similar result holds for all other commutator pairings. Then, moving the
  difference operator and applying the integration by parts formula and
  product rule formula established in Lemma~\ref{lem:diff_op_lemma} we
  arrive at
  \begin{multline}
    \label{eq:intermediate2}
    \int_{\hat Y} \Re\hat\eps(\vec x, T_h(\hvy))\,\hat\nabla\tdiffop
    \hvchi^{\:T} \cdot\hat\nabla{\tdiffop \overline{\hvchi}^{\:T}} +
    (\tdiffop \Re\hat\eps)\,\hat\nabla\hvchi^{\:T} \cdot\hat\nabla{\tdiffop
    \overline{\hvchi}^{\:T}}\dhy
    \\
    + \frac{1}{\omega} \int_{\hat \Sigma}\Im\hat\sigma(\vec x, T_h(\hvy))
    \htnabla\tdiffop \hvchi^{\:T}\cdot\htnabla
    \tdiffop \overline{\hvchi}^{\:T}+ (\tdiffop \Im\hat\sigma)
    \htnabla\hvchi^{\:T}\cdot\htnabla
    \tdiffop \overline{\hvchi}^{\:T} \dhohy
    \\
    = - \Re\Big\{\int_{\hat Y} \tdiffop\big\{\hat\eps(\vx,\hvy)
    (\hat F(\hvy)^T)\big\} \cdot\hat\nabla
    \tdiffop \overline{\hvchi}^{\:T}\dhy \\
    + \frac{1}{i\omega} \int_{\hat \Sigma} \tdiffop\big\{\hat\sigma(\vx,\hvy)
    \htangent{(\hat F(\hvy)^T)}\big\}\cdot\htnabla
    \tdiffop \overline{\hvchi}^{\:T} \dhohy\Big\}
    \\
    +\;\big\{\text{commutator terms}\big\},
  \end{multline}
  where we have collected all commutator terms in the last term (and
  discuss them further down below). Proceeding again as in the proof for
  Theorem~\ref{thm:robustness} by using Young's inequality for all terms on
  the right hand side and uniform ellipticity of $\Re\hat\eps$ and
  $\Im\hat\sigma$ yields:
  \begin{multline}
    \label{eq:intermediate3}
    \|\hat\nabla \tdiffop\hvchi\|^2_{L^2(\hat Y)}
    +\frac1\omega\|\htnabla \tdiffop\hvchi\|^2_{L^2(\hat\Sigma)}
    \\
    \le\;C\;\Big\{
    \|\hat\eps(\vx,\hvy)\|^2_{W^{1,\infty}(\hat Y)}
    \|\hat\nabla\hvchi\|^2_{L^2(\hat Y)}
    + \frac{1}{\omega}
    \|\hat \sigma(\vx,\hvy)\|^2_{W^{1,\infty}(\hat\Sigma)}
    \|\htnabla\hvchi\|^2_{L^2(\hat\Sigma)}
    \\
    +\|\hat\eps(\vx,\hvy)\|^2_{W^{1,\infty}(\hat Y)}
    \|\hat F^T\|^2_{H^1(\hat Y)}
    + \frac{1}{\omega}
    \|\hat \sigma(\vx,\hvy)\|^2_{W^{1,\infty}(\hat\Sigma)}
    \|\hat F^T\|^2_{H^1(\hat\Sigma)}
    \\
    +\;\big|\text{commutator terms}\big|
    \Big\}
  \end{multline}
  In the estimate above we have passed to the limit, $h\to0$, on the right
  hand side.

  As a last step we will discuss the commutator terms. The volume integral
  over $\hat Y$, for example, gives rise to the following terms:
  \begin{multline*}
    \text{(a)} \;=\;
    \int_{\hat Y} \Re\hat\eps\,\hat\nabla\hvchi^{\:T}
    \cdot[\hat\nabla,\tnegdiffop] \tdiffop \overline{\hvchi}^{\:T}\dhy
    \\
    \;+\;
    \int_{\hat Y} \Re\hat\eps(T_h(\hvy))\,[\hat\nabla,\tdiffop] \hvchi^{\:T}
    \cdot\hat\nabla{\tdiffop \overline{\hvchi}^{\:T}}\dhy
    \\
    + \Re\Big\{\int_{\hat Y} \hat\eps(\vx,\hvy)
    (\hat F(\hvy)^T) \cdot
    [\hat\nabla,\tnegdiffop] \tdiffop \overline{\hvchi}^{\:T}\dhy\Big\}.
  \end{multline*}
  Applying \eqref{eq:jump_estimate} allows us to estimate $\text{(a)}$ by
  \begin{multline*}
    \big|(a)\big| \;\le\; C\,
    \|\hat\eps(\vx,\hvy)\|_{L^{\infty}(\hat Y)}
    \max\big\{1,\|\hat\nabla\veta\|_{L^\infty(\hat
    Y)},\|\hat\nabla^2\veta\|_{L^\infty(\hat Y)}\big\}
    \\
    \|\hat\nabla \tdiffop\hvchi\|_{L^2(\hat Y)}
    \;\Big\{ \|\hat\nabla\hvchi\|_{L^2(\hat Y)}
    + \|\hat F^T\|_{H^1(\hat Y)} \Big\}.
  \end{multline*}
  This expression can again be absorbed into the remaining terms of
  \eqref{eq:intermediate3} by changing the constant $C$ to $C
  \max\{1,\|\hat\nabla\veta\|_{L^\infty(\hat Y)},
  \|\hat\nabla^2\veta\|_{L^\infty(\hat Y)}\}$. An analogous result holds
  for all terms arising from the surface integral over $\hat\Sigma$.
  Passing to the limit $h\to0$ on the left side \cite{gilbarg2001} now
  shows the final estimate given in \eqref{eq:estimate}.
\end{proof}
\begin{proof}[Proof of Theorem~\ref{thm:regularity}]
  Estimate \eqref{eq:regularity} is an immediate consequence of
  Proposition~\ref{thm:regularity_intermediate} by setting
  $\vec\eta=\hat{\vec\tau}_i$ and using Theorem~\ref{thm:robustness} to
  estimate the $\|\hat\nabla\hvchi\|_{L^2(\hat Y)}$ and
  $\|\htnabla\hvchi\|_{L^2(\hat \Sigma)}$ terms on the right hand side of
  \eqref{eq:estimate}.
\end{proof}

%%%%%%%%%%%%%%%%%%%%%%%%%%%%%%%%%%%%%%%%%%%%%%%%%%%%%%%%%%%%%%%%%%%%%%%%%%%%%%%%
%%%%%%%%%%%%%%%%%%%%%%%%%%%%%%%%%%%%%%%%%%%%%%%%%%%%%%%%%%%%%%%%%%%%%%%%%%%%%%%%
%%%%%%%%%%%%%%%%%%%%%%%%%%%%%%%%%%%%%%%%%%%%%%%%%%%%%%%%%%%%%%%%%%%%%%%%%%%%%%%%

\bibliographystyle{siamplain}

\end{document}